\let\oldtocsection=\tocsection
\let\oldtocsubsection=\tocsubsection
\let\oldtocsubsubsection=\tocsubsubsection
\renewcommand{\tocsection}[2]{\hspace{0em}\oldtocsection{#1}{#2}}
\renewcommand{\tocsubsection}[2]{\hspace{2em}\oldtocsubsection{#1}{#2}}
\renewcommand{\tocsubsubsection}[2]{\hspace{4em}\oldtocsubsubsection{#1}{#2}}
\def\g{\gamma}
\def\t{\tau}
\def\i{\iota}
\def\G{\Gamma}
\def\cA{{\mathcal A}}
\def\cD{{\mathcal D}}
\def\cE{{\mathcal E}}
\def\cI{{\mathcal I}}
\def\cO{{\mathcal O}}
\def\cP{{\mathcal P}}
\def\cS{{\mathcal S}}
\newtheorem{dfn}{Definition}[section]
\newtheorem{lem}[dfn]{Lemma}
\newtheorem{prp}[dfn]{Proposition}
\newtheorem{thm}[dfn]{Theorem}
\newtheorem{rmk}[dfn]{Remark}
\newtheorem{cor}[dfn]{Corollary}
\newtheorem{ex}[dfn]{Example}
\begin{document}

\author{Gunnar Carlsson, Benjamin Filippenko}

\title{The space of sections of a smooth function}

\begin{abstract}
Given a compact manifold $X$ with boundary and a submersion $f : X \rightarrow Y$ whose restriction to the boundary of $X$ has isolated critical points with distinct critical values and where $Y$ is $[0,1]$ or $S^1$, the connected components of the space of sections of $f$ are computed from $\pi_0$ and $\pi_1$ of the fibers of $f$. This computation is then leveraged to provide new results on a smoothed version of the evasion path problem for mobile sensor networks: From the time-varying homology of the covered region and the time-varying cup-product on cohomology of the boundary, a necessary and sufficient condition for existence of an evasion path and a lower bound on the number of homotopy classes of evasion paths are computed. No connectivity assumptions are required.
\end{abstract}

\maketitle

\tableofcontents

\newpage

\section{Introduction} \label{sec:introduction}

Given a smooth map $f : X \rightarrow Y$ of compact manifolds, we are interested in the homotopy type of the space of continuous sections 
$$\G f = \{ \delta : Y \rightarrow X \,\, | \,\, f \circ \delta = id_Y \}$$
of $f$. In the case that $f$ is a fiber bundle, there is a well-known obstruction theory for constructing sections of $f$ by moving up the skeleta of a CW-decomposition of $Y$. If in addition $Y$ is contractible, then $\G f$ is homotopy equivalent to the fiber of $f$. The situation when $f$ is not a fiber bundle has received much less attention in the literature. It has interesting behavior even when $Y$ is contractible.

For $Y = [0,1]$ and $Y = S^1$, we establish a complete computation (Theorem~\ref{thm:pinbaby}) of $\pi_0(\G f)$
for a class of smooth maps $f : X \rightarrow Y$ that is generic on the boundary $\partial X$. We call these tame functions (Definition~\ref{dfn:tamefunction}). The computation uses $\pi_1$ and $\pi_0$ of the fibers of $f$. Roughly, a tame function $f$ on a smooth compact $X$ with boundary $\partial X$ is a submersion whose restriction $f|_{\partial X}$ has isolated critical points with distinct critical values. In Remark~\ref{rmk:futurework}, we propose various extensions of this result, e.g.\ a computation of $\pi_k$ of the components of $\G f$, higher dimensional $Y$, and other interesting possibilities.

We apply Theorem~\ref{thm:pinbaby} to a smoothed version of the evasion path problem for mobile sensor networks \textsection \ref{sec:theevasionpathproblem}. A mobile sensor network is a collection of sensors moving continuously in a bounded domain $\cD \subset \mathbb{R}^d$ such that each sensor can detect objects within a fixed radius. The evasion path problem asks for the identification of intruders that follow a continuous path in the domain $\cD$ that is at all times disjoint from the region $C$ covered by the sensors. Intruders should be identified from only topological (e.g.,\ no coordinates) information about the mobile sensor network. This problem has been studied in \cite{deSilvaGhristEvasionsFence} \cite{EvasionAdamsCarlsson} \cite{MR3763757} (see \textsection \ref{subsec:priorwork}).

We introduce a smoothed version of the evasion path problem in \textsection \ref{subsec:idealized} which effectively approximates the mobile sensor network version. In Corollary~\ref{cor:discretization}, we obtain a complete computation of the connected components of the space of evasion paths in terms of the time-varying $\pi_0$ and $\pi_1$ of the uncovered region $X = \cD \setminus C$. In Theorem~\ref{thm:evasiontheorem}, we establish a necessary and sufficient condition for existence of an evasion path and moreover a lower bound on the number of connected components in terms of time-varying (co)homological information about the covered region $C$ and its boundary. The cup product plays a crucial role. Note that Theorem~\ref{thm:evasiontheorem} does not require the time-varying covered region $C_t$ to be connected, unlike all prior necessary and sufficient conditions for existence of an evasion path. In the connected case, Theorem~\ref{thm:evasiontheorem} has the simpler form Corollary~\ref{cor:connected}.

The precise situation in Theorem~\ref{thm:pinbaby} is as follows. Refer to Figure~\ref{fig:examples} for examples. Let $X$ be a smooth compact cobordism between manifolds with boundary $X_0$ and $X_1$ (Definition~\ref{dfn:cobordism}). Let $f : X \rightarrow [0,1]$ be a tame function (Defintion~\ref{dfn:tamefunction}), i.e.\ a smooth submersion with $f^{-1}(i) = X_i$ for $i = 0,1$ such that the restriction $f|_{\partial X} : \partial X \rightarrow [0,1]$ to the boundary $\partial X$ (not including $X_0$ and $X_1$) has isolated critical points with distinct critical values. (There is a similar story for $f : X \rightarrow S^1$ where $X$ is a manifold with boundary). Note that $f$ is submersive if, for example, $X \subset \mathbb{R}^d \times [0,1]$ is a codimension-$0$ embedding and $f$ is the projection onto $[0,1]$, as is the case in the smoothed evasion path problem. A next step in this research is to allow $f$ to have critical points in the interior of $X$, making the conditions $C^{\infty}$-generic.

To state the theorem, choose regular values $s_i$ that interleave the critical values $t_i$ of $f|_{\partial X}$:
$$0 = s_0 < t_1 < s_1 < t_2 < \cdots < t_n < s_n = 1.$$
Set $X_i = f^{-1}(s_i)$ and $X_i^{i+1} = f^{-1}([s_i,s_{i+1}]).$
There is a diagram of spaces where all maps are inclusions of regular level sets into the regular cobordisms between them
\begin{equation} \label{eq:diagramofspaces}
\widetilde{ZX} := \big ( X_0 \hookrightarrow X_0^1 \hookleftarrow X_1 \hookrightarrow \cdots \hookrightarrow X_{n-1}^n \hookleftarrow X_n \big ).
\end{equation}

Since each $X_i^{i+1}$ contains exactly $1$ critical point of $f|_{\partial X}$ on the boundary, we can construct a gradient-like vector field whose flow deformation retracts $X_i^{i+1}$ onto either $X_i$ or $X_{i+1}$ depending on whether the outward pointing normal vector $\eta$ at the boundary critical point in $X_i^{i+1}$ satisfies $df(\eta) > 0$ or $df(\eta) < 0$ (by submersivity of $f$, we have $df(\eta) \neq 0$). We keep track of this information via the assignment
\begin{equation} \label{eq:critpointtypemap}
\partial^+(i) :=
\begin{cases} 
      i & \text{if } df(\eta) > 0\\
      i+1 & \text{if } df(\eta) < 0,
\end{cases}
\end{equation}
for $i = 0,\ldots,n-1$. The inclusion $X_{\partial^+(i)} \hookrightarrow X_i^{i+1}$ is a homotopy equivalance. After applying $\pi_0$ to the diagram \eqref{eq:diagramofspaces}, one of the induced maps going into $\pi_0(X_i^{i+1})$ is a bijection, so we obtain a diagram where the arrows point either left or right,
$$\pi_0(ZX) := \big ( \pi_0(X_0) \leftrightarrow \pi_0(X_1) \leftrightarrow \cdots \leftrightarrow \pi_0(X_n) \big ).$$

The following theorem is proved in \textsection \ref{subsec:connectedcomponents}; see Theorems~\ref{thm:computationconnectedcomponents}, \ref{thm:computationcomponentsS1} for the precise statements.

\newpage

\begin{thm} \label{thm:pinbaby} \text{}
\begin{enumerate}

\item There is a surjection
$$\Pi_0 : \pi_0(\Gamma f) \rightarrow \varprojlim \pi_0(ZX)$$
with fiber over $\Psi \in \varprojlim \pi_0(ZX)$ characterized as follows. Let $\mathfrak{b} \in \G f$ such that $\Pi_0(\mathfrak{b}) = \Psi$.  Then $\Pi_0^{-1}(\Psi)$ is naturally in bijection with the orbits of an action of the group $\prod_{i=0}^n \pi_1(X_i,\mathfrak{b}(s_i))$ on the set $\prod_{i=0}^{n-1} \pi_1(X_{\partial^+(i)}, \mathfrak{b}(s_{\partial^+(i)}))$.

\item Let $X$ be a smooth compact manifold with boundary equipped with a submersion $f : X \rightarrow S^1$ whose restriction to the boundary $f|_{\partial X} : \partial X \rightarrow S^1$  has isolated critical points with distinct critical values. Define the diagram $\widetilde{ZX}$ as in \eqref{eq:diagramofspaces} with the additional identity map $X_0 = X_n$, and similarly define $\pi_0(ZX)$. Then the statements in (i) hold.
\end{enumerate}
\end{thm}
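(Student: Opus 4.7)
The strategy is threefold: first, to define $\Pi_0$ by evaluation at the regular values and verify it lands in the inverse limit; second, to prove surjectivity by constructing sections from compatible components using a gradient-like vector field; and third, to identify fibers with orbits of an action induced by the ambiguity in normalizing sections to match a fixed base section.

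\emph{Definition and surjectivity of $\Pi_0$.} Set $\Pi_0(\delta) := ([\delta(s_0)], \ldots, [\delta(s_n)])$. Since $\delta|_{[s_i, s_{i+1}]}$ is a path in $X_i^{i+1}$ from $\delta(s_i)$ to $\delta(s_{i+1})$, the image lies in $\varprojlim \pi_0(ZX)$, and the map is homotopy invariant. For surjectivity, given $\Psi = (c_0, \ldots, c_n)$, choose representatives $p_i \in X_i$ of $c_i$. Using a gradient-like vector field $V$ on $X$ with $df(V) = 1$ (built by local charts and a partition of unity, tangent to $\partial X$ off the critical points), I will build a section $\delta_i \colon [s_i, s_{i+1}] \to X_i^{i+1}$ with $\delta_i(s_i) = p_i$ and $\delta_i(s_{i+1}) = p_{i+1}$ on each elementary cobordism. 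To do so, flow $p_i$ by $V$ to obtain a section whose endpoint lands in the component $c_{i+1}$ (since the flow realizes the $\pi_0(ZX)$-map between the regular fibers), then correct the endpoint to $p_{i+1}$ by a short path in $X_{i+1}$ lifted to a section via $V$. Concatenating yields a section $\delta$ with $\Pi_0(\delta) = \Psi$.

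\emph{Characterization of fibers.} Fix a base section $\mathfrak{b}$ with $b_i := \mathfrak{b}(s_i)$, and take any $\delta \in \Pi_0^{-1}(\Psi)$. Using that $[\delta(s_i)] = [b_i]$ in $\pi_0(X_i)$, homotope $\delta$ through sections so that $\delta(s_i) = b_i$ for every $i$; this is done by choosing a path from $\delta(s_i)$ to $b_i$ in $X_i$ and lifting through $V$ in a neighborhood of $s_i$. After this normalization, each restriction $\delta|_{[s_i, s_{i+1}]}$ is a section of $f|_{X_i^{i+1}}$ with fixed endpoints $b_i, b_{i+1}$. The space of such sections deformation retracts onto the path space $P(X_i^{i+1}; b_i, b_{i+1})$ by straightening paths to sections using $V$, and the deformation retraction $X_i^{i+1} \simeq X_{\partial^+(i)}$ further identifies this path space with $\Omega(X_{\partial^+(i)}, b_{\partial^+(i)})$. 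Taking $\pi_0$ on each piece yields an element of $\prod_{i=0}^{n-1} \pi_1(X_{\partial^+(i)}, b_{\partial^+(i)})$.

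\emph{The action.} The normalization is unique only up to loops at $b_i$ in $X_i$; a loop $\gamma \in \pi_1(X_i, b_i)$ modifies the two adjacent factors by multiplication with the image of $\gamma$ under the natural map $\pi_1(X_i, b_i) \to \pi_1(X_{\partial^+(j)}, b_{\partial^+(j)})$ induced by the inclusion $X_i \hookrightarrow X_j^{j+1}$ followed by the homotopy equivalence $X_j^{j+1} \simeq X_{\partial^+(j)}$, for $j = i-1, i$. This defines the action of $\prod_{i=0}^n \pi_1(X_i, b_i)$ on $\prod_{i=0}^{n-1} \pi_1(X_{\partial^+(i)}, b_{\partial^+(i)})$; its orbit set is $\Pi_0^{-1}(\Psi)$. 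Part (ii) follows by the same argument with the cyclic identification $X_0 = X_n$ closing the chain. The main technical obstacle will be rigorously promoting the Morse-theoretic deformation retraction $X_i^{i+1} \simeq X_{\partial^+(i)}$ to a deformation retraction of section/path spaces with fixed endpoints, and checking that the explicit action formula is well-defined and natural in both cases of $\partial^+(i)$.
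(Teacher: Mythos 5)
Your proposal follows essentially the same route as the paper's proof: define $\Pi_0$ by evaluation at the $s_i$, prove surjectivity cobordism-by-cobordism using a gradient-like flow plus an endpoint correction, normalize a section in a fiber of $\Pi_0$ to agree with the base section $\mathfrak{b}$ at every $s_i$, identify the normalized sections over each $X_i^{i+1}$ with $\pi_1(X_{\partial^+(i)},\mathfrak{b}(s_{\partial^+(i)}))$ via the collapse onto $X_{\partial^+(i)}$, and account for the ambiguity of normalization by the conjugation-type action of $\prod_i\pi_1(X_i,\mathfrak{b}(s_i))$ — this is exactly the paper's splicing/collapse machinery, and the ``main technical obstacle'' you flag is what the paper resolves with its explicit splicing homotopies (Lemmas~\ref{lem:endpointfreetoendpointfixed}, \ref{lem:relationshipsplicingcollapse}, \ref{lem:sectioncollapsebijectivity}). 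One correction: a single global vector field $V$ on $X$ with $df(V)=1$ that is inward-pointing or tangent along $\partial X$ cannot exist across a type-D critical point (there any admissible vector has $df\leq 0$), so the gradient-like field must be chosen separately on each elementary cobordism with $df(\xi)=\pm 1$ according to the type of its unique critical point, which is precisely why the collapse lands in $X_{\partial^+(i)}$ rather than always in $X_{i+1}$.
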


\begin{figure}
\begin{tikzpicture}
    \matrix[matrix of nodes]{
    \includegraphics[scale=.3]{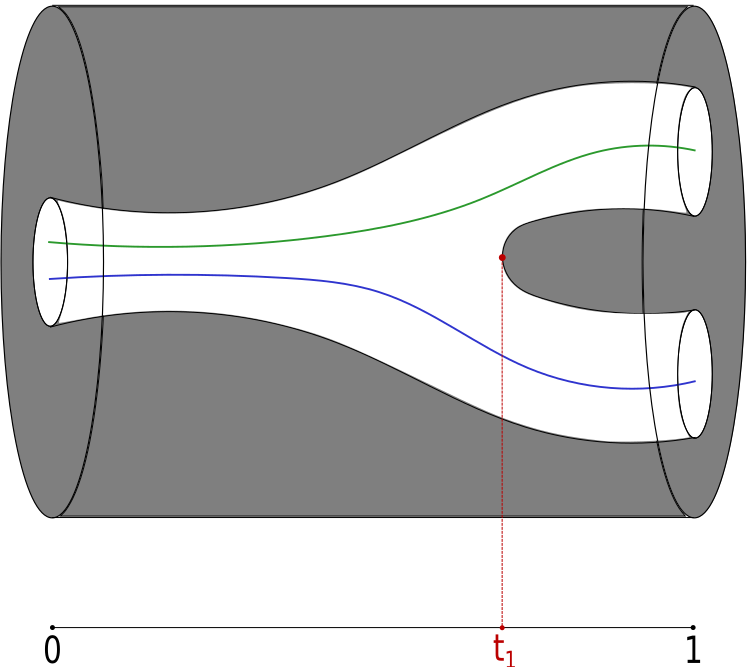} & \includegraphics[scale=.3]{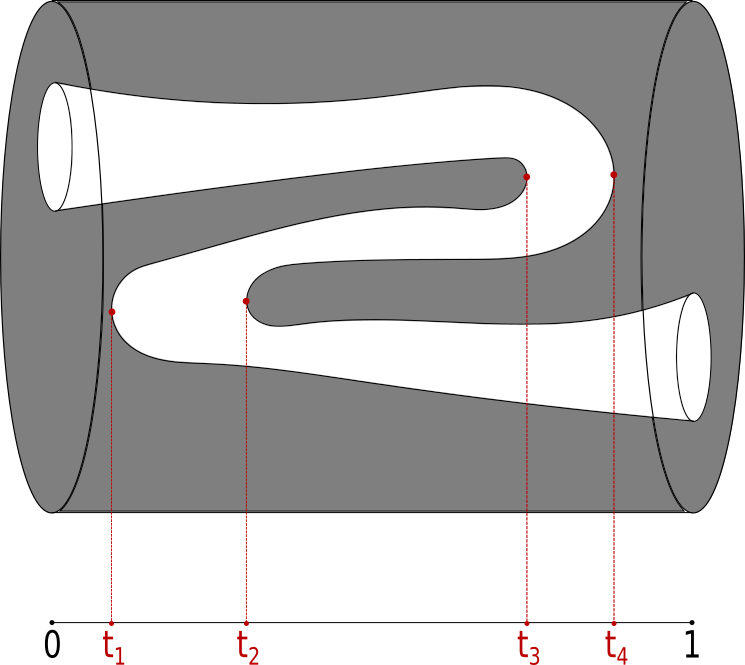}\\
    (a) & (b)\\
    \text{} & \text{}\\
        };
  \end{tikzpicture}
  \begin{tikzpicture}
  \matrix[matrix of nodes]{
    \includegraphics[scale=.3]{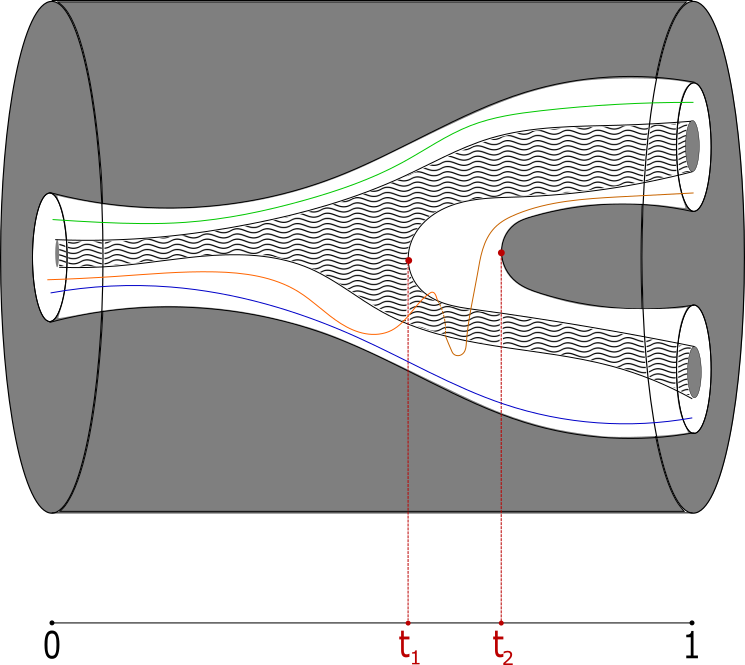}  \\
    (c) \\
 };
    \end{tikzpicture}
\caption{The $3$-dimensional white cobordisms $X$ are embedded in the grey $D^2 \times [0,1]$ and the projections $f$ to $[0,1]$ are tame. The wavy shaded region in $(c)$ is not in $X$.  The critical points of the restriction $f|_{\partial X} : \partial X \rightarrow [0,1]$ to the boundary $\partial X$ are the red dots with critical values $t_i$. Sections of $f$ are shown in blue, green, and, orange.}
\label{fig:examples}
\end{figure}

\begin{rmk} \label{rmk:applytheoremtoexamples}
Applying Theorem~\ref{thm:pinbaby} to the examples in Figure~\ref{fig:examples} produces the following.

In $(a)$, $X$ is an embedded solid pair of pants  and the sections $\G f$ of the projection $f : X \rightarrow [0,1]$ have two connected components $|\pi_0(\G f)| = 2$, represented by the green and blue sections.

In $(b)$, $X$ is an embedded solid cylinder and there are no sections, $\G f = \emptyset$.

In (c), $X$ is a cobordism between a single annulus and two annuli. There are infinitely many connected components of the sections $|\pi_0(\G f)| = \infty$. Indeed, the three sections pictured are not fiberwise homotopic, and one can modify the orange section to wrap around the bottom arm of the wavy region any integral number of times, producing a countable collection of non-fiberwise homotopic sections.
\end{rmk}

\begin{rmk} \label{rmk:futurework}
We propose the following extensions of these results to a general theory of sections of $C^{\infty}$-generic smooth maps $f : X \rightarrow Y$.
\begin{enumerate}
\item There is a generalization of Theorem~\ref{thm:pinbaby} to a computation of $\pi_k(\G f , b)$ for any basepoint $b \in \G f$ and $k \geq 1$. These higher $\pi_k$ can be addressed with a fiberwise version of the unstable Adams spectral sequence of Bousfield-Kan. This is currently being pursued by Wyatt Mackey.
\item What happens when we allow $f$ to have critical points in the interior of $X$? Answering this will complete the $\dim Y = 1$ story for $C^{\infty}$-generic $f$ (isolated critical points with distinct critical values).
\item Is there a sheaf theoretic interpretation of these results? The regular level sets in the theorem are homotopy equivalent to preimages of small open neighborhoods, so the diagram $\widetilde{ZX}$ in \eqref{eq:diagramofspaces} comes from an open covering of $[0,1]$ and inclusions of intersections of those open sets. This suggests replacing $\pi_0(ZX)$ with the co-presheaf $U \mapsto \pi_0(f^{-1}(U))$ on $[0,1]$.
\item Generalize Theorem~\ref{thm:pinbaby} to higher dimensions $\dim Y > 1$. We expect that the $\pi_1$-actions will generalize to $\pi_{\dim Y}$-actions.
\end{enumerate}
\end{rmk}

{\bf The Evasion Path Problem:} We apply Theorem~\ref{thm:pinbaby} to obtain new results on the evasion path problem in applied topology: Theorem~\ref{thm:evasiontheorem} and Corollary~\ref{cor:connected}. The evasion path problem is summarized as follows; see \textsection \ref{sec:theevasionpathproblem} for a detailed description. Given a collection of continuous sensors $\cS = \{ \g : [0,1] \rightarrow \cD \}$ moving in a bounded domain $\cD \subset \mathbb{R}^d$ that detect objects within some fixed radius of $\g(t)$ in $\mathbb{R}^d$, an \emph{evasion path} is a continuous intruder $\delta : [0,1] \rightarrow \cD$ that avoids detection by the sensors for the whole time interval $I = [0,1]$. Let $C_t \subset \cD$ denote the region covered by the sensors at time $t \in I$, and set
$$X_t = \cD \setminus C_t.$$
Then evasion paths $\delta$ are sections of the function
$$f : X := \bigcup_{t \in I} X_t \times \{t\} \subset \mathbb{R}^d \times I \longrightarrow I.$$ 
Let $\G f$ denote the space of evasion paths.

The \emph{sensor ball evasion path problem} asks for a criterion that determines whether or not an evasion path exists and that is based only on homological information about the covered region $C_t$. In practice, one imagines that we can understand the topology of $C_t$ since it is the region covered by the sensors. For example, if sensors can detect overlaps of their sensed regions, then \v{C}ech cohomology of $C_t$ can be computed. Versions of this problem have been studied in \cite{deSilvaGhristEvasionsFence} \cite{EvasionAdamsCarlsson} \cite{MR3763757}; see \ref{subsec:priorwork}.

We consider an idealized version of the evasion path problem; see \textsection \ref{subsec:idealized}. Roughly, we smooth $C =  \bigcup_{t \in I} C_t \times \{t\} \subset \mathbb{R}^d \times I$ into a smooth cobordism of manifolds with boundary embedded in $\mathbb{R}^d \times I$ that closely approximates the region covered by the sensors and whose projection $C \rightarrow I$ is tame.
 
Let $B$ denote the boundary of $C$, except for the interior of the fibers over $0$ and $1$. Note that both $B$ and $C$ have associated diagrams $\widetilde{ZB}$ and $\widetilde{ZC}$ defined in the same way as $\widetilde{ZX}$ in \eqref{eq:diagramofspaces}.

The precise statement of Theorem~\ref{thm:evasiontheorem} is given in Theorem~\ref{thm:main}. In Example~\ref{ex:homologicaltheorem}, Theorem~\ref{thm:evasiontheorem} is applied to example (a) from Figure~\ref{fig:examples}. See Remark~\ref{rmk:mainlowdims} for the $d=0,1$ cases.

\begin{thm} \label{thm:evasiontheorem}
Assume $d \geq 2$. There is a surjection $\pi_0(\G f) \rightarrow \varprojlim Hom_{k-algebra}( H^0(\widetilde{ZX};k), k)$. In particular, an evasion path exists (i.e.\ $\G f$ is nonempty) if and only if $\varprojlim Hom_{k-algebra}( H^0(\widetilde{ZX};k), k )$ is nonempty, and the cardinality of $\pi_0(\G f)$ is bounded from below by the cardinality of the inverse limit.

Assume that the projection $C \rightarrow I$ does not have any local minima or local maxima except over $0,1 \in I$, which holds if sensors are not created or destroyed in time. Then the zigzag diagram of $k$-algebras $H^0(\widetilde{ZX};k)$ is determined up to isomorphism by the zigzag diagram of $k$-algebras $H^0(\widetilde{ZB};k)$, the map $\partial^+$ defined in \eqref{eq:critpointtypemap}, and an Alexander duality isomorphism of $H^0$ of the regular fibers of $B$ with $H_{d-1}$ of their complements $B^c$ as well as maps on $H_{d-1}$ induced fiberwise by inclusion $C \rightarrow B^c$.
\end{thm}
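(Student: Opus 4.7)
The plan is to deduce part (i) from Theorem~\ref{thm:pinbaby} via the standard identification of $\pi_0$ with $k$-algebra homomorphisms out of $H^0$, and to prove part (ii) by reconstructing each term and arrow of the zigzag $H^0(\widetilde{ZX};k)$ fiberwise using Alexander duality. For part (i), the key observation is that for any space $Y$ with finitely many components, $H^0(Y;k) \cong k^{\pi_0(Y)}$ is a finite product of copies of $k$, and a $k$-algebra homomorphism out of such a product is determined by which of the orthogonal idempotents is sent to $1$; this yields a canonical, natural bijection $\pi_0(Y) \cong Hom_{k-algebra}(H^0(Y;k),k)$. Since $H^0$ and $Hom_{k-algebra}(-,k)$ are both contravariant, their composition is covariant and naturally isomorphic to $\pi_0$, giving a natural isomorphism of zigzag diagrams $\pi_0(ZX) \cong Hom_{k-algebra}(H^0(\widetilde{ZX};k),k)$ and hence of inverse limits. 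Composing with the surjection $\Pi_0$ of Theorem~\ref{thm:pinbaby}(i) yields the claimed surjection, from which the nonemptiness and cardinality-bound statements follow immediately.

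For part (ii), I would reconstruct $H^0(\widetilde{ZX};k)$ term-by-term and arrow-by-arrow. At each regular value $s_i$, the closed hypersurface $B_{s_i} \subset \R^d$ separates its complement $B^c_{s_i}$ into open regions partitioned into components lying in the interior of $C_{s_i}$ and components lying in $X_{s_i}$, up to bookkeeping near $\partial \cD$. The Alexander-duality isomorphism $H^0(B_{s_i};k) \cong H_{d-1}(B^c_{s_i};k)$ encodes the component structure of $B^c_{s_i}$, and the image of the fiberwise inclusion-induced map $H_{d-1}(C_{s_i};k) \to H_{d-1}(B^c_{s_i};k)$ picks out exactly the contribution from $C_{s_i}$; the complementary data then recovers $\pi_0(X_{s_i})$ and hence $H^0(X_{s_i};k) \cong k^{\pi_0(X_{s_i})}$ as a $k$-algebra. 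For the arrows of $\widetilde{ZX}$, the hypothesis that $C \to I$ has no interior local extrema means that each cobordism segment $X_i^{i+1}$ contains exactly one critical point of $f|_{\partial X}$ on $B$, captured by $\partial^+$; a gradient-like flow simultaneously retracts both $X_i^{i+1}$ and the corresponding subcobordism of $B$ onto their common level set at $s_{\partial^+(i)}$, so each arrow of $H^0(\widetilde{ZX};k)$ is determined by the corresponding arrow of $H^0(\widetilde{ZB};k)$ via the same fiberwise Alexander-duality and $C \to B^c$ reconstruction.

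The main technical obstacle I anticipate is making the ``complementary data'' step precise: given $H_{d-1}(B^c_{s_i};k)$ and the image of $H_{d-1}(C_{s_i};k)$ inside it, one must pin down exactly how the remaining information recovers $\pi_0(X_{s_i})$ as a set rather than merely as a dimension count. Careful bookkeeping of components of $B^c_{s_i}$ touching $\partial \cD$ or containing the point at infinity in the $S^d$-compactification used for Alexander duality is also essential, so as neither to omit the ``outermost'' component of $X_{s_i}$ adjacent to $\partial \cD$ nor to introduce a spurious component at infinity; the hypothesis $d \geq 2$ reflects this care, with the low-dimensional cases handled separately in Remark~\ref{rmk:mainlowdims}. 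Once the $\pi_0$-reconstruction is verified, the $k$-algebra structure falls out automatically as a product of copies of $k$ indexed by path components, and the reconstruction in part (ii) is complete.
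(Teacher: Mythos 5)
Part (i) of your proposal matches the paper: the natural bijection $\pi_0(Y)\cong Hom_{k\text{-alg}}(H^0(Y;k),k)$ is exactly Proposition~\ref{prp:dualizeH0algebra}, composed with the surjection $\Pi_0$ from Corollary~\ref{cor:discretization}. The Alexander-duality step at regular fibers $X_{s_i}$ is also correct and matches the paper's commutative square giving $im(\i^*_{B_i})=im(\alpha_i\circ{\i_{C_i}}_*)$; your concern about the unbounded component of $B^c_{s_i}$ is legitimate and is why the paper uses unreduced $H^0$ (see Remark~\ref{rmk:Alexanderduality}).

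The gap is in your reconstruction of the cobordism arrows of $H^0(\widetilde{ZX};k)$. You assert that ``a gradient-like flow simultaneously retracts both $X_i^{i+1}$ and the corresponding subcobordism of $B$ onto their common level set at $s_{\partial^+(i)}$.'' The second half of this is false: $B_i^{i+1}$ contains a genuine critical point $p$ of $\rho_B:B\to I$, so $B_i^{i+1}$ does not deformation retract onto $B_{\partial^+(i)}$ in general (e.g.\ a $1$-handle attachment). Moreover, the gradient-like field used in Lemma~\ref{lem:defretracttoregvalues} to retract $X_i^{i+1}$ is inward-pointing along $B$, so its flow does not preserve $B$ at all; there is no ``simultaneous'' retraction. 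Your proposal also implicitly invokes Alexander duality in the cobordism slabs $X_i^{i+1}\subset\R^d\times[s_i,s_{i+1}]$, where it does not apply. The paper avoids both problems: it only proves that the restriction map $\varphi:H^0(B_i^{i+1};k)\to H^0(B_{\partial^+(i)};k)$ is \emph{injective} (not an isomorphism), and this is precisely where the no-local-extrema hypothesis enters --- a nontrivial kernel of $\varphi$ would force a component $A\subset B_i^{i+1}$ with $\partial A\subset B_{\partial^-(i)}$, hence $p\in A$ would be a local extremum of $B\to I$ and therefore of $C\to I$, a contradiction. Combined with $\phi:H^0(X_i^{i+1};k)\to H^0(X_{\partial^+(i)};k)$ being an isomorphism (true because $f$ itself has no critical points, so Lemma~\ref{lem:defretracttoregvalues} applies to $X_i^{i+1}$), this yields $im(\i^*_{B_i^{i+1}})=\varphi^{-1}(im(\i^*_{B_{\partial^+(i)}}))$. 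So your use of the no-local-extrema hypothesis is aimed at the right place, but the mechanism must be the injectivity of $\varphi$, not a nonexistent retraction of $B_i^{i+1}$.
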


\begin{cor} \label{cor:connected}
Assume that $C_t$ is connected for all $t \in I$. Then there is a surjection $\pi_0(\G f) \rightarrow \varprojlim Hom_{k-algebra}( H^0(\widetilde{ZB};k), k)$. In particular, an evasion path exists (i.e.\ $\G f$ is nonempty) if and only if $\varprojlim Hom_{k-algebra}( H^0(\widetilde{ZB};k), k )$ is nonempty, and the cardinality of $\pi_0(\G f)$ is bounded from below by the cardinality of the inverse limit.
\end{cor}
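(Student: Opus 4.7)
The strategy is to derive Corollary~\ref{cor:connected} from the unconditional first statement of Theorem~\ref{thm:evasiontheorem}, which provides the surjection $\pi_0(\G f) \twoheadrightarrow \varprojlim \mathrm{Hom}_{k\text{-alg}}(H^0(\widetilde{ZX};k),k)$, by showing that under the hypothesis that $C_t$ is connected for every $t$ the two inverse limits
\[
\varprojlim \mathrm{Hom}_{k\text{-alg}}(H^0(\widetilde{ZX};k),k) \quad \text{and} \quad \varprojlim \mathrm{Hom}_{k\text{-alg}}(H^0(\widetilde{ZB};k),k)
\]
are naturally isomorphic, the isomorphism being induced by the inclusion $B \hookrightarrow X$.

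First I would reduce to a statement about $\pi_0$. For any space $Y$, $H^0(Y;k) \cong k^{\pi_0(Y)}$ as $k$-algebras, so $\mathrm{Hom}_{k\text{-alg}}(H^0(Y;k),k)$ is functorially identified with $\pi_0(Y)$. The two inverse limits thus become $\varprojlim \pi_0(\widetilde{ZX})$ and $\varprojlim \pi_0(\widetilde{ZB})$, and the task is to show that the inclusion $B \hookrightarrow X$ induces an isomorphism of zigzag diagrams of sets $\pi_0(\widetilde{ZB}) \xrightarrow{\sim} \pi_0(\widetilde{ZX})$.

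At each regular value $t$ of $f|_{\partial X}$, I would verify $\pi_0(B_t) \cong \pi_0(X_t)$ via Mayer--Vietoris applied to the ambient $\R^d$-decomposition $\overline{X_t} \cup C_t$ (with open thickenings to be rigorous) meeting in $B_t$. Using that $H^*(\R^d;k)$ is concentrated in degree zero as $k$ and that $H^0(C_t;k) = k$ by connectedness, one obtains the exact portion
\[
0 \longrightarrow k \longrightarrow H^0(\overline{X_t};k) \oplus k \longrightarrow H^0(B_t;k) \longrightarrow 0,
\]
with the first map landing on the diagonal, so $H^0(\overline{X_t};k) \cong H^0(B_t;k)$ via restriction along $B_t \hookrightarrow \overline{X_t}$. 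Equivalently, iterated Jordan--Brouwer applied to the closed orientable hypersurface components of $B_t$ gives $|\pi_0(\R^d \setminus B_t)| = |\pi_0(B_t)| + 1$, and since $C_t$ is connected $\mathrm{int}(C_t)$ accounts for exactly one of these components, leaving a bijection between $\pi_0(B_t)$ and $\pi_0(X_t)$.

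For the cobordism pieces, the same argument applied in $\R^d \times [s_i,s_{i+1}]$ to $\overline{X_i^{i+1}} \cup C_i^{i+1}$ meeting in $B_i^{i+1}$ yields $\pi_0(B_i^{i+1}) \cong \pi_0(X_i^{i+1})$, provided the cobordism $C_i^{i+1}$ is itself connected. This follows from connectedness of each fiber $C_t$ together with tameness of $f|_C$: a gradient-like vector field for $f|_C$ lets any point of $C_i^{i+1}$ be joined by a path to some regular fiber $C_t$, which is connected. Naturality of these bijections with respect to the zigzag inclusions $B_i \hookrightarrow B_i^{i+1} \hookleftarrow B_{i+1}$ and their $X$-analogs assembles them into the desired isomorphism of diagrams; passing to $\varprojlim$ then completes the proof. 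The main obstacle I anticipate is the cobordism step: at the critical value $t_{i+1}$ the topology of $B_t$ changes by a handle attachment, and one must check carefully both that the cobordism-level Mayer--Vietoris decomposition is still valid in the presence of this critical behavior and that the connectedness argument for $C_i^{i+1}$ correctly handles the critical points of $f|_{\partial C}$.
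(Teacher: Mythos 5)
Your proposal is correct and follows essentially the same route as the paper: the paper also composes the unconditional surjection (Corollary~\ref{cor:discretization} plus Proposition~\ref{prp:dualizeH0algebra}) with the fact that $B \hookrightarrow X$ induces an isomorphism of zigzag diagrams $H^0(\widetilde{ZX};k) \cong H^0(\widetilde{ZB};k)$, proved exactly by your Mayer--Vietoris argument on the fibers and on the cobordism pieces, with connectedness of $C_i^{i+1}$ obtained from the gradient-like deformation retraction of Lemma~\ref{lem:defretracttoregvalues}. The only cosmetic point is that the ambient space for the Mayer--Vietoris decomposition should be the contractible ball $\cD$ (resp.\ $\cD \times [s_i,s_{i+1}]$) rather than $\mathbb{R}^d$, since $X_t \cup C_t = \cD$; this changes nothing in the computation.
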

\begin{proof}
This follows immediately from Corollary~\ref{cor:discretization}, Proposition~\ref{prp:dualizeH0algebra}, and Proposition~\ref{prp:computationintermsofcoveredandboundary}.
\end{proof}

\begin{rmk}
The following heuristic suggests that in applications to sensor networks it is enough to compute $\varprojlim Hom_{k-algebra}( H^0(\widetilde{ZX};k), k)$ to understand all evasion paths that an intruder is likely to take. The fibers of the surjection $\pi_0(\G f) \rightarrow \varprojlim Hom_{k-algebra}( H^0(\widetilde{ZX};k), k)$ are described by $\pi_1$ of the regular fibers of the map $X \rightarrow I$, as in Theorem~\ref{thm:pinbaby}. An intruder is randomly sampling from the space of paths, and so is unlikely to wrap around nontrivial loops in the regular fibers.
\end{rmk}

{\bf Funding.} This material is based upon work supported by the National Science Foundation under Award No.~1903023.

\section{Tame functions on cobordisms of manifolds with boundary} \label{sec:tamefunctions}

\begin{dfn} \label{dfn:cobordism}
A {\bf compact cobordism of manifolds with boundary} is a compact $(d+1)$-dimensional manifold $X$ with boundary and corners\footnote{For $0 \leq k \leq n$, the $k$-stratum $\partial_kX$ of a $n$-dimensional manifold $X$ with boundary and corners consists of those points $x \in X$ around which there is a smooth chart to $[0,\infty)^k \times \mathbb{R}^{n-k}$ that identifies $y$ with a point in $\{0\}^k \times \mathbb{R}^{n-k}$. For a cobordism $X$ of manifolds with boundary, the highest nonempty stratum is $\partial_2 X = \partial B = \partial X_0 \sqcup \partial X_1$. The $1$-stratum $\partial_1 X$ is the union of the interiors of $X_1, B,$ and $X_2$, which together with the $2$-stratum forms the full boundary $\partial X$. The $0$-stratum $\partial_0 X$ is the interior of $X$.} whose boundary decomposes as a union
$$\partial X = X_- \cup B \cup X_+$$
where $B$ and each $X_{\pm}$ is an embedded $d$-dimensional manifold with boundary and such that the following properties hold:
\begin{itemize}
\item The intersection $X_- \cap X_+ = \emptyset$ is empty. We say that $X$ is a {\bf cobordism between $X_-$ and $X_+$},\\
\item $\partial X_{\pm} = X_{\pm} \cap B,$\\
\item $B$ is a cobordism between the closed manifolds $\partial X_-$ and $\partial X_+,$ i.e., $\partial B = \partial X_- \sqcup \partial X_+.$
\end{itemize}
\end{dfn}

The following notion of a tame function, as well as the constructions we perform with them in this section, is inspired by the Morse theory on manifolds with boundary; see \cite{MR3203357}\cite{MR3493413}\cite{MR348790}\cite{MR611759}\cite{MR0331409}\cite{MR2388043}\cite{MR2819662}\cite{MR0190942} and Remark~\ref{rmk:tameversusmorse}.

\begin{dfn} \label{dfn:tamefunction} A {\bf tame function} $f : X \rightarrow [s_-,s_+]$ on a compact cobordism of manifolds with boundary is a smooth function satisfying the following conditions:
\begin{itemize}
\item The critical points of the restriction $f|_B : B \rightarrow [s_-,s_+]$ are isolated and have distinct critical values in $(s_-,s_+)$,\\
\item $f$ is submersive,\\
\item $X_- = f^{-1}(s_-)$ and $X_+ = f^{-1}(s_+)$.
\end{itemize}
\end{dfn}

\begin{rmk} \label{rmk:tameversusmorse}
Definition~\ref{dfn:tamefunction} does not require any nondegeneracy condition on the critical points of $f|_B : B \rightarrow [s_-,s_+]$, so $f|_B$ does not have to be a Morse function in the sense of \cite[Def.~2.3]{MR0190942}. In this way, our definition is more general than the Morse condition. On the other hand, we do not allow $f$ itself to have critical points. Note that for the projection $f : X \rightarrow [s_-,s_+]$ from a codimension-$0$ submanifold $X \subset \mathbb{R}^d \times [s_-,s_+]$, as is the situation in the smoothed evasion path problem (see \textsection\ref{sec:theevasionpathproblem}), the map $f$ is submersive.
\end{rmk}

There are two types of critical points of a tame function on the boundary.

\begin{dfn} \label{dfn:criticalpointtypes}
Let $f : X \rightarrow [s_-,s_+]$ be a tame function, $p \in B$ a critical point of $f|_B : B \rightarrow [s_-,s_+]$, and $\eta \in T_pX$ an outward pointing vector. Then $p$ is {\bf type N} if $df(\eta) < 0$ and {\bf type D} if $df(\eta) > 0.$
\end{dfn}

Note that $df(\eta) = 0$ is impossible since it would imply that $p$ is a critical point of $f$.

\subsection{Local constructions}
The following constructions are local in the sense that for a tame function $f : X \rightarrow [s_-,s_+]$ and $t \in [s_-,s_+$], the constructions apply in the preimage $f^{-1}([t-\epsilon,t+\epsilon])$ for $\epsilon > 0$ small enough.

To construct sections of a tame function that pass a critical value, we make use of the flow lines of the gradient-like vector field $\xi$ on $X$ constructed in the following.

\begin{prp} \label{prp:pseudogradient}
Let $f : X \rightarrow [s_-,s_+]$ be a tame function.

If $f|_B$ has only type D critical points, then there exists a smooth vector field $\xi$ on $X$ with the following properties:
\begin{enumerate}
\item $df(\xi) = -1$ on all of $X$.\\
\item For all\footnote{For $p \in X_-$, the vector field $\xi$ constructed in the proof of Proposition~\ref{prp:pseudogradient} is outward pointing on $Int(X_-)$, and on $\partial X_-$ it is outward pointing with respect to $X_-$ and inward pointing with respect to $B$.} $p \in \partial X \setminus X_-$, the vector $\xi_p \in T_pX$ is inward pointing.\\
\end{enumerate}

If $f|_B$ has only type N critical points, then there exists a smooth vector field $\xi$ on $X$ with the following properties:
\begin{enumerate}
\item $df(\xi) = 1$ on all of $X$.\\
\item For all $p \in \partial X \setminus X_+$, the vector $\xi_p \in T_pX$ is inward pointing.\\
\end{enumerate}

Moreover, given any smooth section $\mathfrak{b} : [s_-,s_+] \rightarrow X$ of $f$ (i.e.\, $f \circ \mathfrak{b}(t) = t$) such that $\mathfrak{b}(t) \not \in B$ for all $t \in [s_-,s_+]$, the vector field $\xi$ can be chosen such that
$$\frac{d}{dt} \mathfrak{b} (t) =
\begin{cases} 
      -\xi|_{\mathfrak{b}(t)}  & \text{if } \text{type D} \\
     \xi|_{\mathfrak{b}(t)}  & \text{if } \text{type N}.
   \end{cases}
$$
\end{prp}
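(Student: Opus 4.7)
The plan is to build $\xi$ from local vector fields glued by a partition of unity. I will write the type D case; the type N case follows by replacing $f$ with $s_- + s_+ - f$, which exchanges the two types and the two signs of $df(\xi)$, and exchanges the roles of $X_-$ and $X_+$.

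I cover $X$ by open sets and define a smooth local vector field $\xi_\alpha$ on each, satisfying $df(\xi_\alpha) = -1$ and the inward-pointing condition on $\partial X \setminus X_-$. Near an interior point, or near $\mathrm{int}(X_\pm)$ away from $B$, I use a submersion chart $(y,t)$ with $t = f$ and take $\xi_\alpha = -\partial/\partial t$; this is inward at $X_+$ and outward at $X_-$, as required. Near a regular point $q$ of $f|_B$, I choose a boundary chart $(y,r)$ with $B = \{r=0\}$, $r \le 0$ in $X$, and $\partial/\partial r$ outward. Since $df|_B(q) \ne 0$, the equation $df(\xi_\alpha) = -1$ can be solved with any prescribed $r$-component; choosing $\xi^r < 0$ yields a strictly inward vector, and this choice extends smoothly over a neighborhood. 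At a type D critical point $p \in B$, I define $\xi_\alpha|_p := -\eta/df(\eta)$; this is inward because $df(\eta) > 0$ makes the coefficient negative, and it satisfies $df(\xi_\alpha|_p) = -1$. Both conditions are open, so I extend $\xi_\alpha$ smoothly to a neighborhood while preserving them. Corner points in $\partial X_+$ are not critical for $f|_B$ (its critical values lie in $(s_-,s_+)$), and I treat them by adjusting the components transverse to $B$ and $X_+$ independently.

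For the prescribed section $\mathfrak{b}$, I fix a tubular neighborhood $V$ of $\mathfrak{b}([s_-,s_+])$ disjoint from $B$ and trivialize it so that $\mathfrak{b}$ becomes a constant section. In $V$ I extend $-\mathfrak{b}'(t)$ to a smooth vector field $\xi_V$ with $df(\xi_V) = -1$. I then shrink to a smaller tube $V' \subset V$, cover $X \setminus \overline{V'}$ by the charts above, and choose a subordinate partition of unity $\{\phi_V, \phi_\alpha\}$ so that $\phi_V \equiv 1$ on $\mathfrak{b}([s_-,s_+])$. Setting $\xi := \phi_V \xi_V + \sum_\alpha \phi_\alpha \xi_\alpha$, the identity $\sum \phi = 1$ together with $df(\xi_\bullet) = -1$ on each patch gives $df(\xi) = -1$ globally. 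At each boundary point of $\partial X \setminus X_-$ the set of inward-pointing vectors is a convex cone, so a convex combination of inward vectors is inward; thus $\xi$ is inward on $\partial X \setminus X_-$. By construction $\xi|_{\mathfrak{b}(t)} = -\mathfrak{b}'(t)$, producing the prescribed integral curve.

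The main obstacle is the compatibility of the two pointwise requirements at a type D critical point $p \in B$: since $p$ is critical for $f|_B$, the kernel of $df_p$ equals $T_pB$, so any vector with $df(\xi) = -1$ must have a nontrivial component along the outward normal $\eta$. The type D hypothesis $df(\eta) > 0$ is exactly what forces that component to have the inward sign, making the local construction possible; without the uniform type condition on all critical points on $B$, the local inward models would clash with each other on overlaps and the partition-of-unity combination could fail to remain inward.
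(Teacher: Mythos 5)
Your proof is correct and follows essentially the same strategy as the paper's: construct local vector fields in charts adapted to the stratification of $X$ (interior, $\mathrm{Int}(X_\pm)$, regular and critical points of $f|_B$, corners), observe that the type-D hypothesis forces the normal component at a boundary critical point to be inward, and glue with a partition of unity using linearity of $df(\cdot)=-1$ and convexity of the inward cone; the paper handles the section $\mathfrak{b}$ by choosing the patches pointwise (disjoint from $\mathfrak{b}$ or from $B$) rather than via a single tubular neighborhood, but this is only a cosmetic packaging difference.
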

\begin{proof}
We consider the case of type D critical points; the type N case is symmetric.

It suffices to construct $\xi$ locally in an open neighborhood of every point $p \in X$ and then sum up these local vector fields with a partition of unity. For the local construction, let $p \in X$ and for now assume that if $p \in B$ then $p$ is a regular point of $f|_B$. Then by the implicit function theorem there exists an open neighborhood $U(p) \subset X$ of $p$ and a smooth chart $\varphi : U(p) \xrightarrow{\sim} V$ such that $\varphi(p) = (f(p),0,\ldots,0) \in V$ where $V$ is an open subset of one of the following spaces depending on where $p$ sits on $X$, and in such a way that
$$f \circ \varphi^{-1} : V \rightarrow [s_-,s_+]$$
is the projection onto the first coordinate:
\begin{itemize}
\item If $p \in Int(X)$, then $V \subset \mathbb{R}^{\dim X}$,\\
\item If $p \in Int(X_-)$, then $V \subset [s_-,\infty) \times \mathbb{R}^{\dim X -1}$,\\
\item If $p \in Int(X_+)$, then $V \subset (-\infty,s_+] \times \mathbb{R}^{\dim X -1}$,\\
\item If $p \in Int(B)$, then $V \subset \mathbb{R} \times ([0,\infty) \times \mathbb{R}^{\dim X -2})$,\\
\item If $p \in \partial X_-$, then $V \subset [s_-,\infty) \times ([0,\infty) \times \mathbb{R}^{\dim X -2})$,\\
\item If $p \in \partial X_+$, then $V \subset (-\infty,s_+] \times ([0,\infty) \times \mathbb{R}^{\dim X -2})$.\\
\end{itemize}
In all cases above, the constant vector field
$$(-1,1,0,\ldots,0)$$
on $V$ pulls back through the diffeomorphism $\varphi$ to a vector field $\xi$ on $U(p)$ satisfying $df(\xi) = -1$, and moreover it is inward pointing along all points $p \in \partial X \setminus X_- = Int(B) \cup X_+$, as required.

It remains to consider a critical point $p \in B$ of type $D$. By definition of tame function, $p$ is in $Int(B) = B \setminus (\partial X_- \cup \partial X_+)$. Consider a neighborhood $\tilde{U}(p) \subset X$ of $p$ and a coordinate chart $\tilde{U}(p) \xrightarrow{\sim} V \subset [0,\infty) \times \mathbb{R}^{\dim X -1}$ that sends $p$ to $0$. Then the constant vector field $(1,0,\ldots,0)$ pulls back to a vector field $\xi$ on $\tilde{U}(p)$ that is inward pointing, and hence $d_pf(\xi_p) < 0$ since $p$ is type D. Then $df(\xi) < 0$ in a smaller open neighborhood $U(p) \subset \tilde{U}(p)$. Hence the vector field $\xi / |df(\xi)|$ satisfies (i) and (ii) on $U(p)$, as required.

Consider now the final statement of the proposition where we are given a smooth section $\mathfrak{b}$ that is disjoint from $B$. For $p \neq \mathfrak{b}(t)$ for all $t$, choose the neighborhood $U(p)$ to be disjoint from the image of $\mathfrak{b}$ and perform the construction as above. Suppose $p = \mathfrak{b}(t)$ for some $t$. Choose $U(p)$ to be disjoint from $B$. Define $\xi|_{\mathfrak{b}(t)} = -\mathfrak{b}'(t)$ for all $t$ such that $\mathfrak{b}(t) \in U(p)$. Then $df(\xi|_{\mathfrak{b}(t)}) = -(f \circ \mathfrak{b})'(t) = -1$. Extend $\xi$ over $U(p)$ so that $df(\xi) = -1$ on $U(p)$.
\end{proof}

Using the flow of $\xi$, we now construct a deformation retraction of $X$ onto either $X_-$ or $X_+$ that move the values of $f: X \rightarrow [s_-,s_+]$ at constant speed $\pm 1$ along $[s_-,s_+]$. In particular, fibers get mapped to fibers at all times throughout the deformation retraction. This deformation retraction is a workhorse that is used throughout.

\begin{lem} \label{lem:defretracttoregvalues}
Let $f : X \rightarrow [s_-,s_+]$ be a tame function.

If all critical points of $f|_B$ are type $D$, then $X$ deformation retracts onto $X_- = f^{-1}(s_-)$. Moreover, there is a deformation retraction 
$$H : X \times [0, s_+-s_-] \rightarrow X$$
that moves the fibers of $f$ along $[s_-,s_+]$ at constant speed $-1$ until they reach $X_-$ and stop; precisely, $H$ satisfies
$$f(H(x,t)) = \max\{s_-, \,\, f(x) - t\}$$
and
$$H(H(x,t),\Delta) = H(x,t+\Delta)$$
for all $\Delta$.
Moreover, given any smooth section $\mathfrak{b} : [s_-,s_+] \rightarrow X$ of $f$ (i.e.\, $f \circ \mathfrak{b}(t) = t$) disjoint from $B$, the deformation retraction $H$ can be chosen so that
$$H(\mathfrak{b}(t),\Delta) = \mathfrak{b}(t-\Delta)$$
for all $t \in [s_-,s_+]$ and $0 \leq \Delta \leq t - s_-$.

Symmetrically, if all critical points of $f|_B$ are type $N$, then there is a deformation retraction $H : X \times [0,s_+-s_-] \rightarrow X$ of $X$ onto $X_+$ satisfying
$$f(H(x,t)) = \min\{s_+, \,\, f(x) + t\}$$
and
$$H(H(x,t),\Delta) = H(x,t+\Delta)$$
for all $\Delta$. Moreover, given any smooth section $\mathfrak{b} : [s_-,s_+] \rightarrow X$ of $f$ disjoint from $B$, the deformation retraction $H$ can be chosen so that
$$H(\mathfrak{b}(t),\Delta) = \mathfrak{b}(t+\Delta)$$
for all $t \in [s_-,s_+]$ and $0 \leq \Delta \leq s_+ - t$.
\end{lem}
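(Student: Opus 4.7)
The plan is to define $H$ as the truncated flow of the vector field $\xi$ constructed in Proposition~\ref{prp:pseudogradient}, stopping each trajectory the moment it hits $X_-$. I will argue the type $D$ case; the type $N$ case follows verbatim after reversing the sign of $\xi$ and swapping $X_-$ and $X_+$.

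First I fix $\xi$ on $X$ via Proposition~\ref{prp:pseudogradient}; when a smooth section $\mathfrak{b}$ disjoint from $B$ is prescribed, I invoke the last clause of that proposition so that $\xi|_{\mathfrak{b}(t)} = -\mathfrak{b}'(t)$. Let $\varphi_\tau$ denote the time-$\tau$ flow of $\xi$. Because $df(\xi) = -1$ and $\xi$ points inward along $\partial X \setminus X_-$, the trajectory $\tau \mapsto \varphi_\tau(x)$ through any $x \in X$ exists and stays in $X$ on $[0, f(x)-s_-]$, satisfies $f(\varphi_\tau(x)) = f(x) - \tau$, and reaches $X_-$ precisely at $\tau = f(x) - s_-$. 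I then set
$$H(x,\Delta) := \varphi_{\min\{\Delta,\, f(x)-s_-\}}(x), \qquad \Delta \in [0, s_+ - s_-].$$
Directly from the definition, $f(H(x,\Delta)) = \max\{s_-, f(x)-\Delta\}$, $H(\cdot,0) = \mathrm{id}_X$, $H(x,\Delta) = x$ for all $x \in X_-$, and $H(x,s_+-s_-) \in X_-$; joint continuity of $\varphi$ and of $\min$ yields continuity of $H$. The semigroup identity $H(H(x,t),\Delta) = H(x,t+\Delta)$ reduces to $\varphi_a \circ \varphi_b = \varphi_{a+b}$ combined with the elementary arithmetic identity
$$\min\bigl\{\Delta,\, \max\{s_-, f(x)-t\}-s_-\bigr\} + \min\{t,\, f(x)-s_-\} = \min\{t+\Delta,\, f(x)-s_-\}.$$

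Compatibility with the given section comes for free: the relation $\xi|_{\mathfrak{b}(t)} = -\mathfrak{b}'(t)$ says that $\tau \mapsto \mathfrak{b}(t-\tau)$ is an integral curve of $\xi$, so uniqueness of ODE solutions forces $\varphi_\tau(\mathfrak{b}(t)) = \mathfrak{b}(t-\tau)$, whence $H(\mathfrak{b}(t), \Delta) = \mathfrak{b}(t-\Delta)$ on the stated range $0 \leq \Delta \leq t - s_-$. The main technical obstacle I anticipate is that $X$ is a manifold with corners along $\partial X_\pm$, so classical ODE existence on smooth manifolds with boundary does not apply verbatim. I would handle this by enlarging $X$ to an ambient manifold $\widetilde{X}$ with smooth boundary, gluing open collars past $X_-$ and $X_+$ using the product charts from the proof of Proposition~\ref{prp:pseudogradient} and extending $\xi$ across these collars so that it remains inward pointing along the extended $B$-portion of the boundary; the flow $\varphi_\tau$ is then well defined on $\widetilde{X}$ by standard theory for vector fields inward pointing on the boundary, and truncating by $\min\{\Delta,\, f(x)-s_-\}$ keeps every trajectory inside $X$ itself.
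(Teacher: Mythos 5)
Your proposal is correct and follows essentially the same route as the paper: both define $H(x,\Delta)=\varphi_{\min\{\Delta,\,f(x)-s_-\}}(x)$ using the truncated flow of the gradient-like field from Proposition~\ref{prp:pseudogradient}, verify $f(H(x,\Delta))=\max\{s_-,f(x)-\Delta\}$ and the semigroup law from $df(\xi)=-1$ and inward-pointing behavior on $\partial X\setminus X_-$, and obtain compatibility with $\mathfrak{b}$ from $\xi|_{\mathfrak{b}(t)}=-\mathfrak{b}'(t)$ and uniqueness of integral curves. Your extra care with the corner stratum (extending $\xi$ across collars before flowing) and the explicit min/max identity are fine refinements of details the paper leaves implicit.
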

\begin{proof}
Assume that all critical points of $f|_B$ are type D; the type N case is symmetric. Let $\xi$ be a vector field on $X$ having the properties guaranteed by Proposition~\ref{prp:pseudogradient}.

We claim that, for $x \in X$, the flow $\varphi_t(x)$ of $\xi$ exists for all $0 \leq t \leq f(x) - s_-$. Indeed, let $\gamma_x : [0,r] \rightarrow X$ be the flow line of $\xi$ starting at
$$\gamma_x(0) = x.$$
We have $\frac{d}{dt}(f \circ \gamma_x) = df(\xi) = -1$ everywhere in the domain of $\gamma_x$ and hence
$$f \circ \gamma_x(t) = f(x) - t.$$
Since $\xi$ is inward pointing along the boundary of $X$ except on $X_-$, it follows that the flow line $\gamma_x(t)$ can be extended to larger $t$ as long as $\gamma_x(t) \not \in X_-$, and if $\gamma_x(t) \in X_-$ then the flow lines stops. Since $\gamma_x(t) \in X_-$ if and only if $s_- = f \circ \gamma_x(t) = f(x) - t$, it follows that $r = f(x) - s_-$. That is, $\gamma_x$ is defined on the interval $[0,f(x)-s_-]$. And, indeed, the flow is given by
$$\varphi_t(x) = \g_x(t) \text{ for all } 0 \leq t \leq f(x) - s_-.$$
The deformation retraction $H$ is defined using the flow $\varphi$ of $\xi$,
\begin{align*}
H : X \times [0,s_+-s_-] &\rightarrow X\\
(x,t) &\mapsto
\begin{cases} 
      \varphi_t(x) & \text{if } 0 \leq t \leq f(x) - s_- \\
      \varphi_{f(x)-s_-}(x) & \text{if } f(x) - s_- \leq t \leq s_+ - s_-.\\
   \end{cases}
\end{align*}
Indeed, $H$ is a deformation retraction of $X$ onto $X_-$ that satisfies the claimed properties.

Given a smooth section $\mathfrak{b} : [s_-,s_+] \rightarrow X$ of $f$, we choose the vector $\xi$ to satisfy $\mathfrak{b}'(t) = -\xi$, as is made possible by Proposition~\ref{prp:pseudogradient}. The flow of $\xi$ is everywhere tangent to $\mathfrak{b}$. Since also $f(\varphi_{\Delta}(\mathfrak{b}(t))) = t - \Delta$, it follows that $\varphi_{\Delta}(\mathfrak{b}(t)) = \mathfrak{b}(t-\Delta)$. Hence, for $t \in [s_-,s_+]$ and $0 \leq \Delta \leq t - s_-$, we have  $H(\mathfrak{b}(t),\Delta) = \varphi_{\Delta}(\mathfrak{b}(t)) = \mathfrak{b}(t-\Delta)$, as claimed.
\end{proof}

We also make use of the classical statement that, near a regular value $t$ of $f|_B : B \rightarrow [s_-,s_+]$, $X$ is diffeomorphic to the fiber $f^{-1}(t)$ times an interval. The precise statement in our setting is the following.

\begin{lem} \label{lem:trivialcobordismnearregularvalue}
Let $f : X \rightarrow [s_-,s_+]$ be a tame function (in particular a submersion) such that its restriction $f|_B : B \rightarrow [s_-,s_+]$ also is a submersion. Then, for every $t \in [s_-,s_+]$, $X$ is `fiberwise diffeormorphic' to the trivial cobordism $[s_-,s_+] \times f^{-1}(t)$. Precisely, this means that there exists a diffeomorphism
$$\varphi : X \xrightarrow{\cong} [s_-,s_+] \times f^{-1}(t)$$
such that
$$f = pr_{[s_-,s_+]} \circ \varphi,$$
where $pr_{[s_-,s_+]} : [s_-,s_+] \times f^{-1}(t) \rightarrow [s_-,s_+]$ is projection. Moreover, the restriction $\varphi|_{f^{-1}(t)} : f^{-1}(t) \rightarrow \{t\} \times f^{-1}(t)$ is the identity on $f^{-1}(t)$.
\end{lem}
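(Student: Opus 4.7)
The strategy is the familiar one: construct a nowhere-zero vector field $\xi$ on $X$ with $df(\xi) = 1$ that is also \emph{tangent to} $B$, and then use its flow to trivialize $X$ over $[s_-,s_+]$. The extra tangency condition (relative to Proposition~\ref{prp:pseudogradient}, where $\xi$ was inward pointing along $B$) is exactly what gives a flow that preserves $X$ in both time directions, and it is available here because $f|_B$ is a submersion, so no critical-point obstructions on $B$ arise.

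\textbf{Step 1: local models.} Around each $p\in X$ I choose a chart on a neighborhood $U(p)$ adapted to $f$ and to the stratum containing $p$. For $p\in\mathrm{Int}(X)$ use $V\subset\R^{\dim X}$ with $f$ the first coordinate, and pull back the constant field $(1,0,\ldots,0)$. For $p\in\mathrm{Int}(X_\pm)$ use the same with $V\subset[s_-,\infty)\times\R^{\dim X-1}$ or $(-\infty,s_+]\times\R^{\dim X-1}$. For $p\in\mathrm{Int}(B)$ I use that $f|_B$ is a submersion at $p$: pick coordinates on $B$ near $p$ in which $f|_B$ is the first coordinate, and extend to a chart on $X$ of the form $V\subset\R\times[0,\infty)\times\R^{\dim X-2}$ with $B$ identified with $\{x_2=0\}$ and $f$ still the first coordinate; the constant field $(1,0,\ldots,0)$ then satisfies $df(\xi)=1$ and is tangent to $B$. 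The corner points $p\in\partial X_\pm$ get the analogous chart in $[s_-,\infty)\times[0,\infty)\times\R^{\dim X-2}$ (or the $s_+$ variant), with the same local field.

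\textbf{Step 2: patching.} I combine these local fields by a smooth partition of unity subordinate to $\{U(p)\}$. The identity $df(\xi)=1$ is preserved because it is an affine condition, and tangency to $B$ is preserved because tangency at a point is a linear condition on the tangent vector. So the patched $\xi$ is a global smooth vector field on $X$ with $df(\xi)=1$ everywhere and $\xi|_B\in TB$.

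\textbf{Step 3: the flow and the diffeomorphism.} Let $\varphi_\tau$ denote the flow of $\xi$. For any $x\in X$, $f(\varphi_\tau(x))=f(x)+\tau$ as long as the flow is defined, so the maximal domain of $\tau\mapsto\varphi_\tau(x)$ is contained in $[s_--f(x),\,s_+-f(x)]$. Since $\xi$ is tangent to $B$, any integral curve starting in $X\setminus B$ cannot hit $B$, and one starting on $B$ stays on $B$; so the flow does not escape through $B$. It can only fail to be defined when it reaches $X_-$ (at $\tau=s_--f(x)$) or $X_+$ (at $\tau=s_+-f(x)$), and there it simply terminates. Thus the flow is defined on the full interval above. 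Now fix $t\in[s_-,s_+]$ and define
\[
\Psi\co[s_-,s_+]\times f^{-1}(t)\longrightarrow X,\qquad \Psi(s,y)=\varphi_{s-t}(y).
\]
Standard ODE dependence on initial conditions makes $\Psi$ smooth; $f\circ\Psi(s,y)=s$; and the inverse $\varphi(x)=(f(x),\varphi_{t-f(x)}(x))$ is smooth by the same reasoning. Taking $\varphi=\Psi^{-1}$ gives the required diffeomorphism, and $\varphi|_{f^{-1}(t)}(y)=(t,\varphi_0(y))=(t,y)$ is the identity.

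\textbf{Main obstacle.} The only subtle point is Step 1 at interior boundary points $p\in\mathrm{Int}(B)$: I need the local chart to simultaneously flatten $B$ and make $f$ the projection to the first coordinate, so that the chosen field is both $\xi$-lifts-to-$(1,0,\ldots,0)$ and tangent to $B$. This uses that $f|_B$ is a submersion at $p$ in an essential way---it is exactly where the hypothesis of the lemma enters---and it is what fails in the presence of a boundary critical point, forcing the $D$/$N$ dichotomy of Proposition~\ref{prp:pseudogradient}. Once this local model is in place, the partition-of-unity patch and the flow argument are routine.
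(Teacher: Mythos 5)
Your proof is correct and follows essentially the same route as the paper's: the paper's (sketched) proof also constructs a vector field with $df(\xi)=\pm 1$ that is tangent to $B$ and builds the diffeomorphism from its flow, citing the classical references for the details you fill in. The only difference is the sign convention ($df(\xi)=1$ versus $-1$), which merely reverses the direction of the flow.
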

\begin{proof} 
The proof is standard. In summary, construct a gradient-like vector field $\xi$ such that $df(\xi) = -1$ and $\xi$ is tangent to the boundary $B$, outward pointing along $X_-$, and inward pointing along $X_+$ (using the same method as the proof of Proposition~\ref{prp:pseudogradient}, or see the proof of \cite[Lem.~3.1]{MR0331409}). Then build the diffeomorphism from the flow of $\xi$ similarly to Lemma~\ref{lem:defretracttoregvalues} (see also the classical reference \cite[Thm.~3.4]{MR0190942}).
\end{proof}

\subsection{Zigzag discretization} \label{subsec:zigzagdiscretization}
Let $f : X \rightarrow I = [0,1]$ be a tame function. Denote the critical values of $f|_B$ by
$$t_1 < t_2 < \cdots < t_n$$
and choose interleaving values
$$0 = s_0 < t_1 < s_1 < t_2 < \cdots < s_{n-1} < t_n < s_n =  1.$$
Consider the subspaces of $X$ given by the preimages
\begin{align*}
X_i &:= f^{-1}(s_i), \text{ for } 0 \leq i \leq n,\\
X_i^{i+1} &:= f^{-1}([s_i,s_{i+1}]), \text{ for } 0 \leq i \leq n-1.
\end{align*}
Since the $s_i$ are regular values of $f$ and $f|_B$, the $X_i$ are manifolds with boundary $\partial X_i = X_i \cap B$ and $X_i^{i+1}$ is a cobordism between $X_i$ and $X_{i+1}$.

These cobordisms and the inclusions of their boundaries fit together into a zigzag diagram of manifolds
\begin{equation*} \label{eq:widezigzagX}
\widetilde{ZX} := \bigg ( X_0 \xhookrightarrow{\eta_{X_0^1}^-} X_0^1 \xhookleftarrow{\eta_{X_0^1}^+} X_1 \xhookrightarrow{\eta_{X_1^2}^-} X_1^2 \xhookleftarrow{\eta_{X_1^2}^+} \cdots \xhookrightarrow{\eta_{X_{n-1}^n}^-} X_{n-1}^n \xhookleftarrow{\eta_{X_{n-1}^n}^+} X_n \bigg ).
\end{equation*}

Each $X_i^{i+1}$ for $i = 0,\ldots,n-1$ contains exactly $1$ critical point of $f|_B$. Suppose this critical point is type $D$. It follows from Lemma~\ref{lem:defretracttoregvalues} that the inclusion $\eta_{X_i^{i+1}}^{-}$ is an inclusion of a deformation retract. Let $H : X_i^{i+1} \times [0,s_{i+1} - s_i] \rightarrow X_i^{i+1}$ be a deformation retraction onto $X_i$ with the properties provided by the lemma. Then we have
$$H(-,s_{i+1}-s_i) \circ \eta_{X_i^{i+1}}^{-} = id_{X_i}.$$
There is a composite map
$$\beta_{i,i+1} := H(-,s_{i+1}-s_i) \circ \eta_{X_i^{i+1}}^{+} : X_{i+1} \rightarrow X_i.$$

If the critical point in $X_i^{i+1}$ is instead type N, then in the same way a map $\beta_{i,i+1}  : X_{i} \rightarrow X_{i+1}$ is obtained. When we do not wish to specify the type of the critical point, we write this map with arrows pointing \emph{both} ways
$$\beta_{i,i+1}  : X_{i} \leftrightarrow X_{i+1}.$$

\begin{rmk} \rm The double arrow $\leftrightarrow$ indicates a map \emph{either} to the right \emph{or} to the left, \emph{not} both ways.
\end{rmk}

To disambiguate when desired, we define the following mappings which record which side of $X_i^{i+1}$ it deformation retracts onto, and hence determines the direction of $\beta_{i,i+1}$. Precisely, for $i = 0,\ldots,n-1$, define
$$
\partial^+(i) :=
\begin{cases} 
      i & \text{if } t_i \text{ is type-}D\\
      i+1 & \text{if } t_i \text{ is type-}N,
\end{cases}
$$
and also for notational convenience define the opposite mapping $$
\partial^-(i) :=
\begin{cases} 
      i+1 & \text{if } t_i \text{ is type-}D\\
      i & \text{if } t_i \text{ is type-}N.
\end{cases}
$$
Then, for both type D and type N critical points, the double arrow denoting $\beta_{i,i+1}$ points in the direction
\begin{equation} \label{eq:betadirected}
\beta_{i,i+1} : X_{\partial^-(i)} \rightarrow X_{\partial^+(i)}.
\end{equation}

The diagram $\widetilde{ZX}$ is then the top `zigzag' of the larger diagram
\[
  \begin{tikzcd}
        & X_0^1 &          & \,\,\, \cdots \,\,\, & & X_{n-1}^n &\\
 X_0  \arrow[hook]{ru}  \arrow{rr}{\beta_{0,1}}  &              & \arrow{ll} \arrow[hook]{lu}  X_1 \arrow{r}{\beta_{1,2}} \arrow[hook]{ru} & \arrow{l} \cdots \arrow{r}{\beta_{n-2,n-1}} & \arrow{l} \arrow[hook]{lu} X_{n-1} \arrow[hook]{ru} \arrow{rr}{\beta_{n-1,n}} &            & \arrow{ll} \arrow[hook]{lu} X_n.
\end{tikzcd}
\]
It is often more convenient to work with the bottom row, which we denote by
\begin{equation} \label{eq:zigzagX}
ZX := (X_0 \xleftrightarrow{\beta_{0,1}} X_1 \xleftrightarrow{\beta_{1,2}} \cdots \xleftrightarrow{\beta_{n-1,n}} X_n).
\end{equation}
The diagram $ZX$ is equivalent to $\widetilde{ZX}$ `up to homotopy,' in the sense that only homotopy equivalences have been removed.

\section{The space of sections of a tame function} \label{sec:sectionsoftamefunction}

Let $X$ be a compact cobordism of manifolds with boundary (Definition~\ref{dfn:cobordism}) and $f : X \rightarrow [s_-,s_+]$ a tame function (Definition~\ref{dfn:tamefunction}).

\begin{dfn} \label{dfn:sectionoftamefunction}
A {\bf section} of $f$ is a continuous map $\delta : [s_-,s_+] \rightarrow X$ satisfying $f \circ \delta = id_{[s_-,s_+]}$, i.e.,\ $\delta(t) \in f^{-1}(t)$ for all $t \in [s_-,s_+]$.
The {\bf space of sections} of $f$ is the space of continuous sections
\begin{equation*} \label{eq:sectionsofIspace}
\G f := \{ \delta : [s_-,s_+] \rightarrow X \,\, | \,\, f \circ \delta = id_{[s_-,s_+]} \}
\end{equation*}
with the compact-open topology.
\end{dfn}

We are interested in computing the connected components $\pi_0(\G f)$ in terms of the homotopy groups of pre-images $f^{-1}(U)$ of open sets $U \subset [s_-,s_+]$. In practice, we perform constructions with regular fibers $f^{-1}(t)$ and their inclusions into cobordisms between them. The formalism we use is the zigzag discretization described in \textsection \ref{subsec:zigzagdiscretization}, which is equivalent to working with preimages of open sets by Lemma~\ref{lem:trivialcobordismnearregularvalue}.

The main results of this section are the computation of $\pi_0(\G f)$ in Theorem~\ref{thm:computationconnectedcomponents} and a similar Theorem~\ref{thm:computationcomponentsS1} when $X$ is a compact manifold with boundary and $f : X \rightarrow S^1$ is submersive with isolated critical points on the boundary with distinct critical values. The proof is the series of arguments in \textsection\ref{subsec:connectedcomponents}. It relies on the section splicing and collapse constructions developed in \textsection\ref{subsec:splicingcollapse}.

\subsection{Section splicing and collapse} \label{subsec:splicingcollapse}
We establish several constructions on homotopy classes of sections and their properties.

Consider a section $\delta : [s_-,s_+] \rightarrow X$ of $f$ and a path $\gamma : [0,1] \rightarrow X_-$ satisfying $\gamma(1) = \delta(s_-)$. The idea is to splice $\gamma$ into $\delta$ in a trivial cobordism near $s_-$. Precisely, choose $\epsilon > 0$ small enough so that the interval $[s_-, s_-+\epsilon]$ does not contain any critical values. Then, by Lemma~\ref{lem:trivialcobordismnearregularvalue}, there is a fiberwise diffeomorphism
$$\varphi : f^{-1}([s_-, s_- +\epsilon]) \cong X_- \times [s_-, s_- +\epsilon]$$
that restricts to the identity $X_- \rightarrow X_- \times \{s_-\}.$ Here fiberwise means that
$$f =  \pi_{[s_-, s_- +\epsilon]} \circ \varphi$$
on $f^{-1}([s_-, s_- +\epsilon]) \subset X$, where $\pi_{[s_-, s_- +\epsilon]} : X_- \times [s_-, s_- +\epsilon] \rightarrow [s_-,s_-+\epsilon]$ is the projection. There is also a projection $\pi : X_- \times [s_-, s_- +\epsilon] \rightarrow X_-$.

\begin{dfn} \label{dfn:leftboundarysplicingepsilonformula}
Given a section $\delta : [s_-,s_+] \rightarrow X$ of $f$ and a path $\gamma : [0,1] \rightarrow X_-$ satisfying $\gamma(1) = \delta(s_-)$,
the {\bf left boundary $\epsilon$-splicing}
$$(\gamma \,\,\#_-^{\epsilon}\,\, \delta) : [s_-,s_+] \rightarrow X$$
is the section of $f$ defined by
\begin{equation*}
(\gamma \,\,\#_-^{\epsilon}\,\, \delta)(t) = 
\begin{cases} 
	\varphi^{-1}(\gamma(\frac{2}{\epsilon}(t-s_-)),t) &\text{ if } t \in [s_-, s_-+\epsilon/2]\\
        \varphi^{-1}(\pi(\varphi(\delta(2t - s_- - \epsilon))),t) &\text{ if } t \in [s_-+\epsilon/2,s_-+\epsilon]\\
        \delta(t) &\text{ if } t \in [s_-+\epsilon,s_+]. \\
\end{cases}
\end{equation*}

For a path $\gamma : [0,1] \rightarrow X_+$ satisfying $\gamma(0) = \delta(s_+)$, there is a {\bf right boundary $\epsilon$-splicing} $\delta \,\,\#_+^{\epsilon}\,\, \gamma$ defined by gluing $\gamma$ into a regular cobordism near $s_+$ using the symmetric formula.

Given a regular value $s_* \in (s_-,s_+)$ and a path $\gamma : [0,1] \rightarrow X_{s_*} = f^{-1}(s_*)$ satisfying $\gamma(1) = \delta(s_*)$, the {\bf interior $\epsilon$-splicing} of the path $\gamma$ into $\delta$ at $s_*$ is the section of $f$ given by
\begin{align*}
\cI^{\epsilon}(\delta,\g,s_*) &:= (\delta|_{[s_-,s_*]} \,\,\#_+\,\, \overline{\g}, \,\, \g \,\,\#_-\,\, \delta|_{[s_*,s_+]})\\
&=
\begin{cases} 
	(\delta|_{[s_-,s_*]} \,\,\#_+\,\, \overline{\g})(t) &\text{ if } t \in [s_-,s_*]\\
	(\g \,\,\#_-\,\, \delta|_{[s_*,s_+]})(t) &\text{ if } t \in [s_*,s_+],
\end{cases}
\end{align*}
where
$$\overline{\g}(t) := \g(1-t)$$
is the reverse path, $\delta|_{[s_-,s_*]} : [s_-,s_*] \rightarrow f^{-1}([s_-,s_*])$ is viewed as a section of $f^{-1}([s_-,s_*]) \rightarrow [s_-,s_*]$, and similarly for $\delta|_{[s_*,s_+]}$.
\end{dfn}

The left boundary $\epsilon$-splicing $(\gamma \,\,\#_-^{\epsilon}\,\, \delta) : [s_-,s_+] \rightarrow X$
is a section of $f$ with endpoints
$$(\gamma \,\,\#_-^{\epsilon}\,\, \delta)(s_-) = \g(0) \text{ and } (\gamma \,\,\#_-^{\epsilon}\,\, \delta)(s_+) = \delta(s_+).$$
Similarly, the right boundary splicing has endpoints
$$(\delta \,\,\#_+^{\epsilon}\,\, \gamma)(s_-) = \delta(s_-)\text{ and } (\delta \,\,\#_+^{\epsilon}\,\, \gamma)(s_+) = \gamma(1).$$

\begin{lem} \label{lem:constantmapsplicing}
Consider a section $\delta \in \G f$ of $f : X \rightarrow [s_-,s_+]$ and the constant paths $\g_- \equiv \delta(s_-)$ and $\g_+ \equiv \delta(s_+)$. Then the splicings $\g_- \,\, \#_-^{\epsilon} \,\, \delta$ and $\delta \,\, \#_+^{\epsilon} \g_+$ are fiberwise homotopic to $\delta$.
\end{lem}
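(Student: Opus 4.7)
The plan is to handle the left boundary splicing $\g_- \,\#_-^\epsilon\, \delta$; the case of $\delta \,\#_+^\epsilon\, \g_+$ is symmetric. Throughout, I work inside the trivial cobordism near $s_-$ using the fiberwise diffeomorphism $\varphi : f^{-1}([s_-,s_-+\epsilon]) \xrightarrow{\cong} X_- \times [s_-,s_-+\epsilon]$ provided by Lemma~\ref{lem:trivialcobordismnearregularvalue}, which is the identity on $X_-$. In these coordinates, the restriction $\delta|_{[s_-,s_-+\epsilon]}$ corresponds to the path $t \mapsto (\alpha(t),t)$, where
$$\alpha(t) := \pi(\varphi(\delta(t))) \in X_-, \qquad \alpha(s_-) = \delta(s_-).$$
A direct inspection of Definition~\ref{dfn:leftboundarysplicingepsilonformula} with $\g \equiv \delta(s_-) = \alpha(s_-)$ shows that the spliced section $\g_- \,\#_-^\epsilon\, \delta$ corresponds on $[s_-,s_-+\epsilon]$ to $t \mapsto (\alpha(\psi_0(t)),t)$, where
$$
\psi_0(t) = \begin{cases} s_- & \text{if } t \in [s_-, s_-+\epsilon/2]\\ 2t - s_- - \epsilon & \text{if } t \in [s_-+\epsilon/2, s_-+\epsilon].\end{cases}
$$
Thus both sections differ only on $[s_-,s_-+\epsilon]$, and the difference amounts to the two reparametrizations $\psi_0$ and $\mathrm{id}$ of the path $\alpha$.

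Next, I would interpolate between the reparametrizations. For $u \in [0,1]$, let $t_u := s_- + (1-u)\epsilon/2$ and define
$$
\psi_u(t) = \begin{cases} s_- & \text{if } t \in [s_-, t_u]\\ \dfrac{\epsilon\,(t - t_u)}{s_- + \epsilon - t_u} + s_- & \text{if } t \in [t_u, s_-+\epsilon],\end{cases}
$$
so that $\psi_1 = \mathrm{id}$, each $\psi_u$ satisfies $\psi_u(s_-+\epsilon) = s_-+\epsilon$, and $(u,t) \mapsto \psi_u(t)$ is jointly continuous. Define the homotopy $H : [s_-,s_+] \times [0,1] \to X$ by
$$
H(t,u) = \begin{cases} \varphi^{-1}\bigl(\alpha(\psi_u(t)),\, t\bigr) & \text{if } t \in [s_-, s_-+\epsilon]\\ \delta(t) & \text{if } t \in [s_-+\epsilon, s_+]. \end{cases}
$$
The two pieces agree at $t = s_-+\epsilon$ because $\psi_u(s_-+\epsilon) = s_-+\epsilon$ gives $\varphi^{-1}(\alpha(s_-+\epsilon), s_-+\epsilon) = \delta(s_-+\epsilon)$. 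At $u=0$ the formula reproduces $\g_- \,\#_-^\epsilon\, \delta$ and at $u=1$ it reproduces $\delta$. By construction $f \circ H(\cdot,u) = \mathrm{id}_{[s_-,s_+]}$ for every $u$, because $\varphi$ preserves the projection to $[s_-,s_-+\epsilon]$.

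Finally, I would verify that $u \mapsto H(\cdot, u)$ is continuous in the compact-open topology on $\G f$; this follows from the joint continuity of $(u,t) \mapsto H(t,u)$ on the compact space $[s_-,s_+] \times [0,1]$ together with the exponential law. The symmetric construction (trivializing near $s_+$ and using a mirror reparametrization) handles $\delta \,\#_+^\epsilon\, \g_+$. There is no real obstacle here beyond bookkeeping; the only step that requires mild care is arranging the interpolating family $\psi_u$ to pin the right endpoint at $s_-+\epsilon$ so that the homotopy glues continuously to the identity homotopy of $\delta|_{[s_-+\epsilon, s_+]}$.
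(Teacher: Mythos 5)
Your proof is correct and takes essentially the same approach as the paper: an explicit fiberwise homotopy supported in the trivialized collar $f^{-1}([s_-,s_-+\epsilon])$, obtained by interpolating reparametrizations of $\delta$. The only cosmetic difference is that the paper's interpolation shrinks the splicing window (replacing $\epsilon$ by $r\epsilon$, so each intermediate section is itself an $r\epsilon$-splicing), whereas you keep the window fixed and straighten the reparametrization $\psi_u$ inside it; both verifications go through identically.
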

\begin{proof}
A fiberwise homotopy $h : [0,1] \times [s_-,s_+] \rightarrow X$ from $\delta$ to $\g_- \,\, \#_-^{\epsilon} \,\, \delta$ is given by
\begin{align*}
h(r,t) :=
\begin{cases} 
	\varphi^{-1}(\delta(s_-),t) &\text{ if } t \in [s_-, s_-+r\epsilon/2]\\
        \varphi^{-1}(\pi(\varphi(\delta(2t - s_- - r\epsilon))),t) &\text{ if } t \in [s_-+r\epsilon/2,s_-+r\epsilon]\\
        \delta(t) &\text{ if } t \in [s_-+r\epsilon,s_+]. \\
\end{cases}
\end{align*}
\end{proof}

As a first application of splicings, we use them in the proof of the following lemma to modify a section to have specific behavior near the endpoints.

\begin{lem} \label{lem:singlecriticalpointlift}
Let $f : X \rightarrow [s_-,s_+]$ be a tame function. Assume that either all critical points of $f|_B$ are type D, or all critical points are type N. Let $\delta_- \in X_-$ and $\delta_+ \in X_+$ such that $\delta_-$ and $\delta_+$ are in the same connected component of $X$. Then there exists a continuous section $\delta : [s_-,s_+] \rightarrow X$ of $f$ such that $\delta(s_-) = \delta_-$ and $\delta(s_+) = \delta_+$.
\end{lem}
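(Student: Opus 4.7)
The plan is to combine the deformation retraction from Lemma~\ref{lem:defretracttoregvalues} with the left boundary splicing of Definition~\ref{dfn:leftboundarysplicingepsilonformula}. I describe the type D case in detail; the type N case is symmetric, obtained by replacing $X_-$ with $X_+$ throughout and using the right boundary splicing.

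First I construct a preliminary section $\sigma : [s_-,s_+] \to X$ that has the correct endpoint at $s_+$. Let $H : X \times [0,s_+-s_-] \to X$ be a deformation retraction onto $X_-$ as in Lemma~\ref{lem:defretracttoregvalues}, so that $f(H(x,r)) = \max\{s_-, f(x) - r\}$. Define $\sigma(t) := H(\delta_+, s_+ - t)$; a direct substitution shows $f \circ \sigma = \operatorname{id}$, so $\sigma$ is a continuous section of $f$. By construction $\sigma(s_+) = H(\delta_+,0) = \delta_+$, and $\delta_+' := \sigma(s_-) = H(\delta_+, s_+ - s_-)$ lies in $X_-$.

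Next I correct the left endpoint by splicing in a path in $X_-$. The deformation retraction exhibits $X_- \hookrightarrow X$ as a homotopy equivalence, so it induces a bijection on $\pi_0$. Since $\delta_-$ and $\delta_+$ lie in the same component of $X$ by hypothesis and since $r \mapsto H(\delta_+, r)$ connects $\delta_+$ to $\delta_+'$ inside $X$, the points $\delta_-, \delta_+' \in X_-$ lie in the same component of $X_-$. Choose a continuous path $\gamma : [0,1] \to X_-$ with $\gamma(0) = \delta_-$ and $\gamma(1) = \delta_+' = \sigma(s_-)$, and choose $\epsilon > 0$ small enough that $[s_-, s_- + \epsilon]$ contains no critical values of $f|_B$. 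The left boundary splicing $\delta := \gamma \,\#_-^\epsilon\, \sigma$ of Definition~\ref{dfn:leftboundarysplicingepsilonformula} is then a section of $f$ with $\delta(s_-) = \gamma(0) = \delta_-$ and $\delta(s_+) = \sigma(s_+) = \delta_+$, as required. The only substantive ingredient here is the standard fact that a homotopy equivalence induces a bijection on $\pi_0$; everything else is a mechanical application of the constructions already developed in this section, so I do not expect any serious obstacle.
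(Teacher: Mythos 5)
Your proposal is correct and follows essentially the same route as the paper's proof: flow $\delta_+$ backwards with the deformation retraction of Lemma~\ref{lem:defretracttoregvalues} to get a section ending at $\delta_+$, then fix the left endpoint with a left boundary $\epsilon$-splicing. The only cosmetic difference is that you justify the existence of the connecting path $\gamma$ in $X_-$ via the $\pi_0$-bijection induced by the deformation retract, whereas the paper explicitly retracts a path in $X$ down into $X_-$ using $H$; these are the same argument.
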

\begin{proof}
Assume that all critical points of $f|_B$ are type D; the argument for type N critical points is symmetric.

Let $H: X \times [0,s_+ - s_-] \rightarrow X$ be the deformation retraction from Lemma~\ref{lem:defretracttoregvalues} satisfying $$f(H(x,t)) = \max\{s_-, \,\, f(x) - t\}.$$
Define the section
\begin{align*}
\tilde{\delta} : [s_-,s_+] &\rightarrow X\\
t &\mapsto H(\delta_+,s_+-t).
\end{align*}
of $f$. It satisfies $\tilde{\delta}(s_+) = H(\delta_+,0) = \delta_+$ and $\tilde{\delta}(s_-) \in X_- = f^{-1}(s_-)$.

We now modify $\tilde{\delta}$ near $s_-$ using a left boundary splicing to obtain a section $\delta : [s_-,s_+] \rightarrow X$ that also satisfies $\delta(s_-) = \delta_-$. Indeed, since $\tilde{\delta}(t)$ is in the same component of $X$ as $\delta_+ = \tilde{\delta}(s_+)$ for all $t \in [s_-,s_+]$, which is in the same component of $X$ as $\delta_-$ by hypothesis, it follows that $\tilde{\delta}(s_-)$ is in the same component of $X$ as $\delta_-$. Taking a path in $X$ from  $\delta_-$ to $\tilde{\delta}(s_-)$ and deformation retracting it with $H$ into $X_-$ produces a path $\gamma : [0,1] \rightarrow X_-$ such that $\g(0) = \delta_-$ and $\g(1) = \tilde{\delta}(s_-)$. Then the left boundary $\epsilon$-splicing (Definition~\ref{dfn:leftboundarysplicingepsilonformula})
$$\delta := (\g \,\,\#_-^{\epsilon}\,\, \tilde{\delta}) : [s_-,s_+] \rightarrow X$$
for any sufficiently small $\epsilon > 0$ is a section of $f$ satisfying $\delta(s_-) = \g(0) = \delta_-$ and $\delta(s_+) = \tilde{\delta}(s_+) = \delta_+$, as required.
\end{proof}

Splicings are also useful for converting fiberwise homotopies of sections with free endpoints to fiberwise homotopies with fixed endpoints.

\begin{lem} \label{lem:endpointfreetoendpointfixed}
Let $f : X \rightarrow [s_-,s_+]$ be a tame function with sections $\delta, \hat{\delta} \in \G f$. Suppose there is a fiberwise homotopy $h: [0,1] \times [s_-,s_+] \rightarrow X$ from $\hat{\delta}$ to $\delta$, meaning that $f(h(r,t)) = t$, $\hat{\delta}(t) = h(0,t)$, and $\delta(t) = h(1,t)$ for all $r,t$. Consider the paths $\g_{\pm} := h(-,s_{\pm}) : [0,1] \rightarrow X_{\pm}$. Then there is a fiberwise homotopy $\tilde{h} : [0,1] \times [s_-,s_+] \rightarrow X$ from $\g_- \,\,\#_-^{\epsilon}\,\, \delta \,\,\#_+^{\epsilon}\,\, \overline{\g}_+$ to $\hat{\delta}$ that is endpoint preserving, i.e.\ $\tilde{h}(r,s_{\pm}) = \hat{\delta}(s_{\pm})$ for all $r \in [0,1]$.
\end{lem}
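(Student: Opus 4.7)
The plan is to build $\tilde h$ by concatenating two fiberwise homotopies: first, simultaneously shrink the spliced sub-paths $\gamma_\pm$ down to constant paths while pulling the middle portion back from $\delta$ to $\hat{\delta}$ through $h$; then, use Lemma~\ref{lem:constantmapsplicing} to deform the resulting constant-splicing back to $\hat{\delta}$. Observe first that the starting section $\gamma_- \,\#_-^\epsilon\, \delta \,\#_+^\epsilon\, \overline{\gamma}_+$ already has the correct endpoints: at $s_-$ it evaluates to $\gamma_-(0) = h(0,s_-) = \hat{\delta}(s_-)$, and symmetrically $\overline{\gamma}_+(1) = \gamma_+(0) = \hat{\delta}(s_+)$. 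More generally, inspection of Definition~\ref{dfn:leftboundarysplicingepsilonformula} shows that $(\gamma \,\#_-^\epsilon\, \delta)(s_-) = \gamma(0)$ independently of $\delta$, and symmetrically for the right splicing; this is the mechanism that will pin the endpoints throughout.

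\emph{Stage 1} ($r \in [0,1/2]$). Set $u = 2r$ and define reparameterized sub-paths $\gamma_\pm^u \colon [0,1] \rightarrow X_\pm$ by
$$\gamma_\pm^u(\sigma) := h\bigl((1-u)\sigma,\, s_\pm\bigr),$$
so that $\gamma_\pm^0 = \gamma_\pm$ and $\gamma_\pm^1 \equiv \hat{\delta}(s_\pm)$. Since $\gamma_\pm^u(1) = h(1-u, s_\pm)$ matches the corresponding endpoint of the middle section $h(1-u,\cdot)$, the formula
$$\tilde h(r,\cdot) := \gamma_-^u \,\#_-^\epsilon\, h(1-u,\cdot) \,\#_+^\epsilon\, \overline{\gamma_+^u}$$
defines a section of $f$. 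At $r=0$ this recovers the initial section, while at $r=1/2$, because $h(0,\cdot) = \hat{\delta}$ and both $\gamma_\pm^1$ are constant, it becomes the double splicing $c_{\hat{\delta}(s_-)} \,\#_-^\epsilon\, \hat{\delta} \,\#_+^\epsilon\, c_{\hat{\delta}(s_+)}$, where $c_x$ denotes the constant path at $x$. The endpoint at $s_\pm$ is $\gamma_\pm^u(0) = \hat{\delta}(s_\pm)$ for every $u$.

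\emph{Stage 2} ($r \in [1/2,1]$). Apply Lemma~\ref{lem:constantmapsplicing} on each side to deform $c_{\hat{\delta}(s_-)} \,\#_-^\epsilon\, \hat{\delta} \,\#_+^\epsilon\, c_{\hat{\delta}(s_+)}$ to $\hat{\delta}$ through a fiberwise homotopy. From the explicit piecewise expression in the proof of that lemma, the value at $s_-$ throughout equals $\varphi^{-1}(\hat{\delta}(s_-), s_-) = \hat{\delta}(s_-)$, using that $\varphi$ restricts to the identity on $X_- \times \{s_-\}$; the symmetric statement holds at $s_+$. Concatenating Stages 1 and 2 produces the required $\tilde h$.

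The main point to verify is continuity of $\tilde h$ in $r$ during Stage 1, which amounts to continuous dependence of the splicing formula on its path and section arguments. This is immediate from Definition~\ref{dfn:leftboundarysplicingepsilonformula}: the three pieces have widths $\epsilon/2$ and $\epsilon$ independent of $u$, the fiberwise diffeomorphism $\varphi$ is fixed, and both $\gamma_\pm^u$ and $h(1-u,\cdot)$ vary continuously in $u$. The boundary match at $r=1/2$ between the two stages is automatic. I expect no deeper obstacle; the lemma is essentially a matter of repackaging the given homotopy $h$ so that the free motions at the endpoints are absorbed into the $\epsilon$-splicing collars.
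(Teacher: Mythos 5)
Your proof is correct and is essentially the paper's argument: the paper likewise deforms through the family $\gamma_-^r \,\#_-^{\epsilon}\, h(r,\cdot) \,\#_+^{\epsilon}\, \overline{\gamma_+^r}$ with reparameterized boundary paths (yours is the same family run in the opposite direction of $r$) and then finishes by appealing to Lemma~\ref{lem:constantmapsplicing}, whose homotopy preserves endpoints. Your explicit two-stage packaging and the continuity check are fine but do not change the substance.
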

\begin{proof}
For $r \in [0,1]$, consider the paths $\g_{\pm}^r(t) := \g_{\pm}(rt).$ Let $c_{\pm}(t) := \hat{\delta}(s_{\pm})$ be the constant paths at the endpoints of $\hat{\delta}$. Then
$\tilde{h}(r,\cdot) := \gamma^r_- \,\,\#_-^{\epsilon}\,\, h(r,\cdot) \,\,\#_+^{\epsilon}\,\, \overline{\gamma^r_+}$
is a fiberwise homotopy from $\tilde{h}(0,\cdot) = c_- \,\,\#_-^{\epsilon}\,\, \hat{\delta} \,\,\#_+^{\epsilon}\,\, c_+$ to $\tilde{h}(1,\cdot) = \gamma_- \,\,\#_-^{\epsilon}\,\, \delta \,\,\#_+^{\epsilon}\,\, \overline{\gamma}_+$ and is endpoint preserving. Finally, the section $c_- \,\,\#_-^{\epsilon}\,\, \hat{\delta} \,\,\#_+^{\epsilon}\,\, c_+$ is fiberwise homotopic to $\hat{\delta}$ by Lemma~\ref{lem:constantmapsplicing}, and the homotopy preserves endpoints.
\end{proof}

The next lemma shows that boundary splicing and interior splicing do not affect the fiberwise homotopy class of a section with no endpoint constraints.

\begin{lem} \label{lem:freehomotopytypeindependentsplicing}
Let $f : X \rightarrow [s_-,s_+]$ be a tame function with a section $\delta \in \G f$, paths $\g_{\pm} : [0,1] \rightarrow X_{\pm}$ such that $\g_-(1) = \delta(s_-)$ and $\g_+(0) = \delta(s_+)$, and a path $\g : [0,1] \rightarrow f^{-1}(s_*)$ for some regular value $s_*$ of $f|_B$ such that $\g(1) = \delta(s_*)$. Then the left boundary splicing $\g_- \,\,\#_-^{\epsilon}\,\, \delta$, the right boundary splicing $\delta \,\,\#_+^{\epsilon}\,\, \g_+$, and the interior splicing $\cI^{\epsilon}(\delta,\g,s_*)$ are all fiberwise homotopic with free endpoints to $\delta$. That is, all of these sections are in the same path component of $\G f$ as $\delta$.
\end{lem}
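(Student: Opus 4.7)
The plan is, in each of the three cases, to construct a one-parameter family of admissible spliced sections that continuously shrinks the auxiliary path into the constant path at the attachment point, reducing the problem to an application of Lemma~\ref{lem:constantmapsplicing}.

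For the left boundary splicing, define $\gamma_-^r : [0,1] \to X_-$ by $\gamma_-^r(t) := \gamma_-(r+(1-r)t)$ for $r \in [0,1]$. Then $\gamma_-^0 = \gamma_-$, $\gamma_-^1 \equiv \delta(s_-)$, and crucially $\gamma_-^r(1) = \gamma_-(1) = \delta(s_-)$ for every $r$, so each $\gamma_-^r$ is an admissible left-splicing input. Reading off the formula in Definition~\ref{dfn:leftboundarysplicingepsilonformula}, the assignment $(r,t) \mapsto (\gamma_-^r \#_-^{\epsilon} \delta)(t)$ is continuous in $(r,t)$ and satisfies $f \circ (\gamma_-^r \#_-^{\epsilon} \delta) = \mathrm{id}_{[s_-,s_+]}$ for every $r$. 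Hence it is a free-endpoint fiberwise homotopy from $\gamma_- \#_-^{\epsilon} \delta$ at $r=0$ to the constant-path splicing $c_{\delta(s_-)} \#_-^{\epsilon} \delta$ at $r=1$, and then Lemma~\ref{lem:constantmapsplicing} concludes the latter is fiberwise homotopic to $\delta$. The right boundary splicing case is entirely symmetric.

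For the interior splicing, set $\gamma^r(t) := \gamma(r+(1-r)t)$, so $\gamma^0 = \gamma$, $\gamma^1 \equiv \delta(s_*)$, and $\gamma^r(1) = \delta(s_*)$ for all $r$. At the matching level $s_*$ one has $\overline{\gamma^r}(1) = \gamma^r(0) = \gamma(r)$ from the left piece and $\gamma^r(0) = \gamma(r)$ from the right piece, so the two halves still agree and $\mathcal{I}^{\epsilon}(\delta, \gamma^r, s_*)$ is a well-defined section for every $r$. This produces a fiberwise homotopy (with free endpoints) from $\mathcal{I}^{\epsilon}(\delta, \gamma, s_*)$ to the section obtained at $r=1$, which is the concatenation at $s_*$ of $\delta|_{[s_-,s_*]} \#_+^{\epsilon} c_{\delta(s_*)}$ and $c_{\delta(s_*)} \#_-^{\epsilon} \delta|_{[s_*,s_+]}$. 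Applying Lemma~\ref{lem:constantmapsplicing} to each half (viewing $f$ restricted to $f^{-1}([s_-,s_*])$ and $f^{-1}([s_*,s_+])$ as tame functions in their own right) yields fiberwise homotopies to $\delta|_{[s_-,s_*]}$ and $\delta|_{[s_*,s_+]}$ respectively.

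The only point requiring a small check is that these two half-homotopies glue continuously at $s_*$. This follows from inspection of the explicit formula for $h$ given in the proof of Lemma~\ref{lem:constantmapsplicing}: on the side of the splicing where a constant path is inserted, one has $h(r,s_*) = \varphi^{-1}(\delta(s_*), s_*) = \delta(s_*)$ throughout the homotopy, because the fiberwise diffeomorphism $\varphi$ restricts to the identity on the regular fiber $f^{-1}(s_*)$. Thus both halves keep the midpoint fixed at $\delta(s_*)$, and the concatenation is a global fiberwise homotopy of $\mathcal{I}^{\epsilon}(\delta, c_{\delta(s_*)}, s_*)$ to $\delta$. I do not expect any step to be a genuine obstacle; the only mild subtlety is this midpoint-matching verification in the interior case.
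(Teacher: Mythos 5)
Your proposal is correct and is essentially the paper's argument, fleshed out: the reparametrized families $\gamma_\pm^r$ realize the paper's ``unwinding'' of the boundary paths in the trivial cobordism, and the family $\overline{\gamma^r}*\gamma^r$ is precisely the standard rel-basepoint contraction of the loop $\overline{\gamma}*\gamma$ that the paper invokes for the interior splicing. Your extra care with the midpoint matching at $s_*$ (using that $\varphi$ is the identity on the fiber, so Lemma~\ref{lem:constantmapsplicing}'s homotopy fixes the spliced endpoint) is a correct and worthwhile verification of a detail the paper leaves implicit.
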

\begin{proof}
Since the endpoints are allowed to move on $X_{\pm} = f^{-1}(s_{\pm})$ during the homotopy, we can `unwind' the path $\g_{\pm}$ spliced onto the left or the right in a trivial cobordism near $s_{\pm}$. The interior splicing $\cI^{\epsilon}(\delta,\g,s_*)$ inserts the graph of the path $\g$ next to the graph of $\overline{\g}$ in a trivial cobordism near $f^{-1}(s_*)$. The concatenation of these paths in $f^{-1}(s_*)$ is a loop based at $\delta(s_*)$ and is contractible relative the basepoint. The graph of this contraction provides a fiberwise homotopy from the interior splicing to $\delta$.
\end{proof}

We now introduce some notation. Given points $x_- \in X_-$ and $x_+ \in X_+$, define the space of sections with fixed endpoints
$$\G f_{x_-}^{x^+} := \{ \delta \in \G f \,\, | \,\, \delta(s_-) = x_- \text{ and } \delta(s_+) = x_+\}.$$
Given a space $A$ and points $x,x' \in A$, the path space with fixed endpoints is denoted
$$\cP(A)^x_{x'} := \{ \g : [0,1] \rightarrow A \,\, | \,\, \g(0) = x' \text{ and } \g(1) = x \}.$$
The based loop space of $A$ at $x$ is denoted
$$\Omega(A)_x = \cP(A)_x^x.$$

The fiberwise homotopy class of a left boundary splicing $\gamma \,\,\#_-^{\epsilon}\,\, \delta$ relative its endpoints $[\gamma \,\,\#_-^{\epsilon}\,\, \delta] \in \pi_0(\G f_{\g(0)}^{\delta(s_+)})$ does not depend on the homotopy class of $\g$ relative its endpoints $[\g] \in \pi_0(\cP(X_-)_{\g(0)}^{\g(1)})$, the fiberwise homotopy class of $\delta$ relative its endpoints $[\delta] \in \pi_0(\G f_{\delta(s_-)}^{\delta(s_+)})$, or $\epsilon > 0$. In particular, for any $x_-', x_- \in X_-$ and $x_+ \in X_+$, left boundary $\epsilon$-splicing descends to a well-defined map
\begin{align*}
\#_- : \pi_0(\cP(X_-)_{x_-'}^{x_-}) \times \pi_0(\G f_{x_-}^{x_+}) &\rightarrow \pi_0(\G f_{x_-'}^{x_+})\\
([\gamma], [\delta]) &\mapsto [\gamma \,\,\#_-^{\epsilon}\,\, \delta].
\end{align*}
In the case $x'_- = x_-$ the path space $\cP(X_-)_{x_-'}^{x_-}$ is equal to the based loop space $\Omega (X_-)_{x_-}$ and moreover $\pi_0(\cP(X_-)_{x_-'}^{x_-}) = \pi_1(X_-,x_-)$. The map
\begin{align*}
\#_- : \pi_1(X_-,x_-) \times \pi_0(\G f_{x_-}^{x_+}) &\rightarrow \pi_0(\G f_{x_-}^{x_+})
\end{align*}
is a left group action of $\pi_1(X_-,x_-)$ on the set $\pi_0(\G f_{x_-}^{x_+}).$

Similarly, given $x_- \in X_-$ and $x_+,x_+' \in X_+$, right boundary $\epsilon$-splicing descends to a well-defined map
\begin{align*}
\#_+ :  \pi_0(\G f_{x_-}^{x_+}) \times \pi_0(\cP(X_+)_{x_+}^{x_+'}) &\rightarrow \pi_0(\G f_{x_-}^{x_+'})\\
([\delta], [\gamma]) &\mapsto [\delta \,\,\#_+^{\epsilon}\,\, \gamma].
\end{align*}
In the case $x'_+ = x_+$, the map
\begin{align*}
\#_+ :  \pi_0(\G f_{x_-}^{x_+}) \times \pi_1(X_+,x_+) &\rightarrow \pi_0(\G f_{x_-}^{x_+})
\end{align*}
is a right group action of $\pi_1(X_+,x_+)$ on the set $\pi_0(\G f_{x_-}^{x_+}).$

We now define various versions of a {\bf collapse map} and establish their properties. Let $f : X \rightarrow [s_-, s_+]$ be a tame function with only type $D$ critical points or, symmetrically, with only type $N$ critical points. For the rest of this section, the symbol $\pm$ means $-$ if type $D$ and $+$ if type $N$, and symmetrically for the symbol $\mp$.

Fix a basepoint section $\mathfrak{b} \in \G f$. Let $H : X \times [0, s_+ - s_-] \rightarrow X$ be a deformation retraction along $\mathfrak{b}$ and onto $X_{\pm}$, as provided by Lemma~\ref{lem:defretracttoregvalues}. Then define the {\bf cobordism collapse map}
\begin{equation}  \label{eq:collapsemap}
C := H(-,s_+-s_-) : X \rightarrow X_{\pm}.
\end{equation}
The map $C$ is a left inverse of the inclusion $X_{\pm} \rightarrow X$. Since $H(\mathfrak{b}(t),s_+-s_-) = \mathfrak{b}(s_{\pm})$ for all $t \in [s_-,s_+]$ by Lemma~\ref{lem:defretracttoregvalues}, we have
\begin{equation*}
C(\mathfrak{b}(t)) = \mathfrak{b}(s_{\pm})\text{ for all } t \in [s_-,s_+],
\end{equation*}
and so $C$ is a map of pairs
\begin{equation*}
C : (X,\mathfrak{b}([s_-,s_+])) \rightarrow (X_{\pm},\mathfrak{b}(s_{\pm})).
\end{equation*}
This map induces the {\bf section collapse map} on components
\begin{align} \label{eq:sectioncollapsemap}
\G C_* : \pi_0(\G f_{\mathfrak{b}(s_-)}^{\mathfrak{b}(s_+)}) &\rightarrow  \pi_1(X_{\pm},\mathfrak{b}(s_{\pm}))\\
[\delta] &\mapsto [C \circ \delta]. \nonumber
\end{align}
We prove in Lemma~\ref{lem:sectioncollapsebijectivity} that $\G C_*$ is a bijection. There is a similarly defined {\bf loop collapse map}
\begin{align*}
C_* : \pi_1(X_{\mp},\mathfrak{b}(s_{\mp})) &\rightarrow  \pi_1(X_{\pm},\mathfrak{b}(s_{\pm}))\\
[\g] &\mapsto [C \circ \g]. \nonumber
\end{align*}
Section collapse $\G C_*$ and section splicing $\#_{\pm}$ are related as follows.

\begin{lem} \label{lem:relationshipsplicingcollapse}
Let $f : X \rightarrow [s_-,s_+]$ be a tame function with only type $D$ critical points and $\mathfrak{b} \in \G f$. Let $C : X \rightarrow X_-$ be an associated collapse map along $\mathfrak{b}$.

Consider a section $\delta \in \G f_{\mathfrak{b}(s_-)}^{\mathfrak{b}(s_+)}$. Then, in $\pi_0(\G f_{\mathfrak{b}(s_-)}^{\mathfrak{b}(s_+)})$, we have
$$\G C_*([\delta]) \,\,\#_-\,\, [\mathfrak{b}] = [\delta].$$

Moreover, consider loops $\g_- \in \pi_1(X_-,\mathfrak{b}(s_-))$ and $\g_+ \in \pi_1(X_+,\mathfrak{b}(s_+))$. Then, in $\pi_1(X_-,\mathfrak{b}(s_-))$ with group operation denoted $\cdot$,  it holds that
$$\G C_*(\g_- \,\,\#_-\,\, \delta \,\,\#_+\,\, \g_+) = \g_- \cdot \G C_*(\delta) \cdot C_*(\g_+).$$
In particular,
$$\G C_*(\g_- \,\,\#_-\,\, \delta) = \g_- \cdot \G C_*(\delta)$$
and
$$\G C_*(\delta \,\,\#_+\,\, \g_+) = \G C_*(\delta) \cdot C_*(\g_+).$$
The symmetric statement holds if $f$ has only type $N$ critical points.
\end{lem}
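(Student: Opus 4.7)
The plan is to build both identities from an explicit fiberwise homotopy constructed using the deformation retraction $H$ of Lemma~\ref{lem:defretracttoregvalues}, and then derive the second identity from the first via a direct computation that supplies the injectivity needed to cancel $\mathfrak{b}$. I assume only type $D$ critical points; the type $N$ case is symmetric.

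For the first identity $\G C_*([\delta]) \,\#_-\, [\mathfrak{b}] = [\delta]$, I define the fiberwise homotopy
$$h(r,t) := H\bigl(\delta(t + r(s_+ - t)),\, r(s_+ - t)\bigr) : [0,1] \times [s_-, s_+] \to X.$$
A direct check using $f(H(x,\Delta)) = \max\{s_-, f(x)-\Delta\}$ confirms $f(h(r,t)) = t$. At $r=0$ one has $h(0,t) = \delta(t)$, and at $r=1$ one gets $h(1,t) = H(\mathfrak{b}(s_+), s_+ - t) = \mathfrak{b}(t)$ using the compatibility $H(\mathfrak{b}(t),\Delta) = \mathfrak{b}(t-\Delta)$. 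The right endpoint $h(r,s_+) = \mathfrak{b}(s_+)$ is fixed, while the left endpoint traces a loop $L$ in $X_-$ based at $\mathfrak{b}(s_-)$. Reparametrizing by $t = s_- + r(s_+-s_-)$ gives $L(t) = H(\delta(t), t-s_-)$, and the basepoint-preserving map
$$K(s,t) := H\bigl(\delta(t),\, (1-s)(t-s_-) + s(s_+-s_-)\bigr)$$
is a homotopy in $X_-$ from $L$ at $s=0$ to $C \circ \delta$ at $s=1$, so $[L] = \G C_*([\delta])$ in $\pi_1(X_-, \mathfrak{b}(s_-))$. Applying Lemma~\ref{lem:endpointfreetoendpointfixed} to $h$ with endpoint paths $L$ and $c_+ \equiv \mathfrak{b}(s_+)$ yields an endpoint-fixed fiberwise homotopy from $L \,\#_-^\epsilon\, \mathfrak{b} \,\#_+^\epsilon\, \overline{c_+}$ to $\delta$; Lemma~\ref{lem:constantmapsplicing} absorbs the constant right splicing, yielding $[L \,\#_-^\epsilon\, \mathfrak{b}] = [\delta]$, which is the first identity.

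For the second identity, I first establish by direct computation that $\G C_*(\alpha \,\#_-\, \mathfrak{b}) = \alpha$ for every $\alpha \in \pi_1(X_-, \mathfrak{b}(s_-))$. Choose the gradient-like $\xi$ from Proposition~\ref{prp:pseudogradient} so that $\xi|_{\mathfrak{b}} = -\mathfrak{b}'$, and take the trivialization $\varphi : f^{-1}([s_-, s_-+\epsilon]) \cong X_- \times [s_-, s_-+\epsilon]$ of Lemma~\ref{lem:trivialcobordismnearregularvalue} to be the one induced by the flow of $\xi$; then $\varphi(\mathfrak{b}(t)) = (\mathfrak{b}(s_-), t)$ and $C \circ \varphi^{-1}(x,t) = x$ because the same flow defines both $C$ and the trivialization. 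Reading off $C \circ (\alpha \,\#_-^\epsilon\, \mathfrak{b})$: on $[s_-, s_-+\epsilon/2]$ it equals $\alpha(2(t-s_-)/\epsilon)$, on $[s_-+\epsilon/2, s_-+\epsilon]$ it equals $C \circ \varphi^{-1}(\mathfrak{b}(s_-), t) = \mathfrak{b}(s_-)$, and on $[s_-+\epsilon, s_+]$ it equals $C(\mathfrak{b}(t)) = \mathfrak{b}(s_-)$. This loop represents $\alpha$ in $\pi_1(X_-, \mathfrak{b}(s_-))$, so $\cdot \,\#_-\, \mathfrak{b}$ has $\G C_*$ as a left inverse and is therefore injective. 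Combining with the first identity and the group action property gives
$$\G C_*(\g_- \,\#_-\, \delta) \,\#_-\, \mathfrak{b} = \g_- \,\#_-\, \delta = \g_- \,\#_-\, \bigl(\G C_*(\delta) \,\#_-\, \mathfrak{b}\bigr) = \bigl(\g_- \cdot \G C_*(\delta)\bigr) \,\#_-\, \mathfrak{b},$$
and injectivity cancels $\mathfrak{b}$ to yield $\G C_*(\g_- \,\#_-\, \delta) = \g_- \cdot \G C_*(\delta)$. An analogous direct computation at the right endpoint using a $\xi$-compatible trivialization near $s_+$ gives $\G C_*(\mathfrak{b} \,\#_+\, \g_+) = C_*(\g_+)$; combined with $\delta \,\#_+\, \g_+ = \G C_*(\delta) \,\#_-\, (\mathfrak{b} \,\#_+\, \g_+)$ (valid since left and right splicings have disjoint supports) and the already-established left-splicing identity applied to the outer $\#_-$, this gives $\G C_*(\delta \,\#_+\, \g_+) = \G C_*(\delta) \cdot C_*(\g_+)$. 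The full identity of the lemma follows.

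The main obstacle is the compatibility of $C$ with the splicing trivialization $\varphi$: the trivialization guaranteed by Lemma~\ref{lem:trivialcobordismnearregularvalue} is a priori arbitrary, whereas $C$ is built from a globally chosen gradient-like $\xi$. The direct computation above works cleanly only after choosing $\xi$ first (compatible with $\mathfrak{b}$ via Proposition~\ref{prp:pseudogradient}) and then constructing $\varphi$ from the flow of $\xi$; otherwise one must argue that $C \circ \varphi^{-1}$ and the projection $X_- \times [s_-, s_-+\epsilon] \to X_-$ are homotopic rel $X_- \times \{s_-\}$ (both being left inverses of the inclusion of a deformation retract) and then track this extra homotopy through the loop concatenation. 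Verifying that the resulting homotopy class in $\pi_1(X_-, \mathfrak{b}(s_-))$ is independent of all these choices, and carrying out the symmetric right-side computation, is the most finicky part of the argument.
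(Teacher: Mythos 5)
Your proof is correct in substance but takes a genuinely different route from the paper's, most notably in the second half. For the first identity the underlying idea is the same (flow $\delta$ leftward onto $\mathfrak{b}$), but the paper writes down a single explicit three-case homotopy formula directly relating $\delta$ to $(C\circ\delta)\,\#_-^\epsilon\,\mathfrak{b}$, whereas you build the free-endpoint homotopy $h(r,t)=H(\delta(t+r(s_+-t)),r(s_+-t))$, identify its left-endpoint trace with $C\circ\delta$ via the auxiliary homotopy $K$, and then invoke Lemma~\ref{lem:endpointfreetoendpointfixed} and Lemma~\ref{lem:constantmapsplicing} to convert to a rel-endpoints statement; this reuses the established machinery and avoids the paper's formula-checking. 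For the second identity the paper argues directly and informally (``flow $\g_-\,\#_-\,\delta\,\#_+\,\g_+$ to the left and read off the concatenation''), while you reduce everything to the two model computations $\G C_*(\alpha\,\#_-\,\mathfrak{b})=\alpha$ and $\G C_*(\mathfrak{b}\,\#_+\,\g_+)=C_*(\g_+)$ and then bootstrap via the first identity, the left-action property of $\#_-$, and the injectivity of $\alpha\mapsto\alpha\,\#_-\,[\mathfrak{b}]$ (which your left-inverse computation supplies). This is a cleaner logical architecture: it isolates exactly where a geometric computation is needed and makes the rest pure algebra, and it is at least as rigorous as the paper's two-sentence heuristic.

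One technical point deserves attention. Your preferred route for the model computation assumes the splicing trivialization $\varphi$ near $s_-$ can be taken to be ``induced by the flow of $\xi$,'' so that $C\circ\varphi^{-1}(x,t)=x$. This does not literally work: the gradient-like field of Proposition~\ref{prp:pseudogradient} used to build $C$ is \emph{inward pointing} along $\mathrm{Int}(B)$, not tangent to $B$, so its backward flow exits $X$ through $B$ and does not define a product structure on the collar $f^{-1}([s_-,s_-+\epsilon])$; the trivialization of Lemma~\ref{lem:trivialcobordismnearregularvalue} comes from a different (boundary-tangent) field. Your fallback is the right fix --- $C\circ\varphi^{-1}$ and the projection $\pi$ are both retractions onto the deformation retract $X_-\times\{s_-\}$, hence homotopic rel that subspace --- but note that splicing this collar homotopy into the loop $C\circ(\alpha\,\#_-^\epsilon\,\mathfrak{b})$ is not quite a concatenation of basepointed homotopies, because the junction value at $t=s_-+\epsilon$ moves during the homotopy; you must insert the track of that junction point (a whisker) and check it cancels. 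This is routine but is exactly the ``finicky'' step you flag, and it should be carried out rather than deferred.
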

\begin{proof}
To prove $\G C_*([\delta]) \,\,\#_-\,\, [\mathfrak{b}] = [\delta],$ we must show that for small $\epsilon > 0$ there is a fiberwise homotopy from $\delta$ to the section $(C \circ \delta) \,\,\#_-^{\epsilon}\,\, \mathfrak{b}$. The section $(C \circ \delta) \,\,\#_-^{\epsilon}\,\, \mathfrak{b}$ is given by flowing $\delta$ to the left until it lies in a trivial cobordism near $X_-$ with its right endpoint lying on $\mathfrak{b}$ and then attaching this onto the left of $\mathfrak{b}$. The homotopy is constructed using this flow. We now write this out in precise formulas.

For simplification of the formulas, assume without loss of generality that $s_- = 0$ and $s_+ = 1$. By definition, we have
$$(C \circ \delta)(t) = H(\delta(t), 1) \text{ for } t \in [0,1]$$
and
\begin{equation*}
((C \circ \delta) \,\,\#_-^{\epsilon}\,\, \mathfrak{b})(t) = 
\begin{cases} 
	\varphi^{-1}(H(\delta(\frac{2}{\epsilon}t),1),t) &\text{ if } t \in [0, \epsilon/2]\\
        \varphi^{-1}(\pi(\varphi(\mathfrak{b}(2t - \epsilon))),t) &\text{ if } t \in [\epsilon/2,\epsilon]\\
        \mathfrak{b}(t) &\text{ if } t \in [\epsilon,1].
\end{cases}
\end{equation*}
The claimed homotopy is then given by
\begin{equation*}
(r,t) \mapsto 
\begin{cases} 
	\varphi^{-1}(\pi(\varphi(H(\delta(\frac{2t}{r\epsilon}(t+r(1-t))),r))),t) & \text{ if } t \in [0, \frac{r\epsilon}{2}]\\
        \varphi^{-1}(\pi(\varphi(H(\delta(t + r(1-t)),r(1 - (2t - r\epsilon))))),t) & \text{ if } t \in [\frac{r\epsilon}{2},r\epsilon]\\
        H(\delta(t + r(1 - t)), r(1-t)) & \text{ if } t \in [r\epsilon,1].
\end{cases}
\end{equation*}
for $r \in [0,1]$. Note that the $r$ in the denominator in the case $t \in [0, \frac{r\epsilon}{2}]$ is not a continuity issue because we have
$$0 \leq \frac{2t}{r\epsilon}(t+r(1-t)) = \frac{2t}{r\epsilon}(t(1-r) + r)  \leq \frac{r\epsilon}{2}(1-r) + r$$ and hence the limit as $r \rightarrow 0$ is equal to zero. This completes the proof of the claim $\G C_*([\delta]) \,\,\#_-\,\, [\mathfrak{b}] = [\delta]$.

The next claim $\G C_*(\g_- \,\,\#_-\,\, \delta \,\,\#_+\,\, \g_+) = \g_- \cdot \G C_*(\delta) \cdot C_*(\g_+)$ is seen as follows. The left side is obtained by flowing $\g_- \,\,\#_-\,\, \delta \,\,\#_+\,\, \g_+$ to the left until it lies in $X_-$. The result of this is the loop $\g_-$, followed by the section $\delta$ pushed into $X_-$, which is exactly $\G C_*(\delta)$, followed by the loop $\g_+$ pushed from $X_+$ into $X_-$ via the flow, which is $C_*(\g_+)$. Appending one path after another is the composition $\cdot$ in $\pi_1(X_-,\mathfrak{b}(s_-))$ on the right hand side of the claimed formula.
\end{proof}

An important consequence of Lemma~\ref{lem:relationshipsplicingcollapse} is that the section collapse map $\G C_*$ is bijective.

\begin{lem} \label{lem:sectioncollapsebijectivity}
Let $f : X \rightarrow [s_-,s_+]$ be a tame function with only type $D$ critical points or only type $N$ critical points, and let $\mathfrak{b} \in \G f$. Then the section collapse map $\G C_*$ defined in \eqref{eq:sectioncollapsemap} is bijective.
\end{lem}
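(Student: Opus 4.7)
The plan is to deduce both surjectivity and injectivity of $\Gamma C_*$ directly from Lemma~\ref{lem:relationshipsplicingcollapse} together with the fact that the collapse $C \circ \mathfrak{b}$ is a constant loop, which follows from the construction of $H$ along the basepoint section in Lemma~\ref{lem:defretracttoregvalues}. I'll do the type $D$ case (the type $N$ case is symmetric).

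First I would record the key auxiliary observation: $\Gamma C_*([\mathfrak{b}])$ is the identity element of $\pi_1(X_-,\mathfrak{b}(s_-))$. Indeed, the deformation retraction $H$ used to define $C$ is chosen via Lemma~\ref{lem:defretracttoregvalues} so that $H(\mathfrak{b}(t),\Delta)=\mathfrak{b}(t-\Delta)$ whenever $0\leq \Delta\leq t-s_-$, and the flow then halts at $X_-$. Consequently $C(\mathfrak{b}(t))=H(\mathfrak{b}(t),s_+-s_-)=\mathfrak{b}(s_-)$ for every $t\in[s_-,s_+]$, so $C\circ\mathfrak{b}$ is the constant loop at $\mathfrak{b}(s_-)$ and represents the trivial element of $\pi_1(X_-,\mathfrak{b}(s_-))$.

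For surjectivity, given any $\gamma\in\pi_1(X_-,\mathfrak{b}(s_-))$ the left boundary splicing $\gamma\,\#_-^\epsilon\,\mathfrak{b}$ is well-defined because $\gamma(1)=\mathfrak{b}(s_-)$, and it lies in $\Gamma f_{\mathfrak{b}(s_-)}^{\mathfrak{b}(s_+)}$ since its endpoints are $\gamma(0)=\mathfrak{b}(s_-)$ and $\mathfrak{b}(s_+)$. By the multiplicativity identity $\Gamma C_*(\gamma\,\#_-\,\delta)=\gamma\cdot\Gamma C_*(\delta)$ from Lemma~\ref{lem:relationshipsplicingcollapse}, applied with $\delta=\mathfrak{b}$, we get
$$\Gamma C_*\bigl([\gamma\,\#_-^\epsilon\,\mathfrak{b}]\bigr) = \gamma\cdot\Gamma C_*([\mathfrak{b}]) = \gamma\cdot e = \gamma,$$
exhibiting $\gamma$ as the image of $[\gamma\,\#_-^\epsilon\,\mathfrak{b}]$.

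For injectivity, suppose $[\delta_1],[\delta_2]\in\pi_0(\Gamma f_{\mathfrak{b}(s_-)}^{\mathfrak{b}(s_+)})$ satisfy $\Gamma C_*([\delta_1])=\Gamma C_*([\delta_2])$. The other identity from Lemma~\ref{lem:relationshipsplicingcollapse}, namely $\Gamma C_*([\delta])\,\#_-\,[\mathfrak{b}]=[\delta]$ for every $\delta\in\Gamma f_{\mathfrak{b}(s_-)}^{\mathfrak{b}(s_+)}$, then gives
$$[\delta_1] = \Gamma C_*([\delta_1])\,\#_-\,[\mathfrak{b}] = \Gamma C_*([\delta_2])\,\#_-\,[\mathfrak{b}] = [\delta_2],$$
so $\Gamma C_*$ is injective. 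I don't expect any real obstacle here: all the hard work (constructing the flow along $\mathfrak{b}$, verifying compatibility of splicing with collapse, and proving the two identities) has already been done in Proposition~\ref{prp:pseudogradient}, Lemma~\ref{lem:defretracttoregvalues}, and Lemma~\ref{lem:relationshipsplicingcollapse}. The only thing to be mildly careful about is noting that $\Gamma C_*([\mathfrak{b}])$ is the identity so that the multiplicative formula simplifies to $\gamma$, and this is exactly what the along-$\mathfrak{b}$ property of $H$ delivers.
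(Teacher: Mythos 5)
Your proof is correct and follows essentially the same route as the paper: surjectivity via the splicing $\gamma\,\#_-\,\mathfrak{b}$ together with the multiplicativity identity from Lemma~\ref{lem:relationshipsplicingcollapse}, and injectivity via the identity $\G C_*([\delta])\,\#_-\,[\mathfrak{b}]=[\delta]$. Your explicit justification that $\G C_*([\mathfrak{b}])$ is trivial (from the along-$\mathfrak{b}$ property of $H$) is a point the paper uses implicitly, so including it is a small improvement in completeness.
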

\begin{proof}
Assume all critical points are type $D$. For surjectivity, let $\g \in \pi_1(X_-,\mathfrak{b}(s_-))$ and consider the left boundary splicing $\delta := \g \,\,\#_-\,\, \mathfrak{b}.$ Then by Lemma~\ref{lem:relationshipsplicingcollapse} we have $\G C_*(\delta) = \g \cdot \G C_*(\mathfrak{b}) = \g \cdot 1 = \g$. For injectivity, let $\delta,\hat{\delta} \in \pi_0(\G f_{\mathfrak{b}(s_-)}^{\mathfrak{b}(s_+)})$ and assume $\G C_*(\delta) = \G C_*(\hat{\delta}).$ Then by Lemma~\ref{lem:relationshipsplicingcollapse} we have $\delta = \G C_*(\delta) \,\,\#_-\,\, [\mathfrak{b}] = \G C_*(\hat{\delta}) \,\,\#_-\,\, [\mathfrak{b}] = \hat{\delta}.$
\end{proof}

\subsection{Connected components} \label{subsec:connectedcomponents}

The connected components $\pi_0(\G f)$ of the space of sections $\G f$ of a tame function $f : X \rightarrow I = [0,1]$ can be computed from $\pi_0$ and $\pi_1$ of the regular fibers of $f$ and maps between them coming from the regular cobordisms between them. This is the main theorem (Theorem~\ref{thm:computationconnectedcomponents}) proved in this section. There is a similar statement for maps $X \rightarrow S^1$ given in Theorem~\ref{thm:computationcomponentsS1}.

Recall the diagram $ZX$ of regular fibers of $f$ from \eqref{eq:zigzagX}. Applying $\pi_0$ produces a diagram of sets
$$\pi_0(ZX) = \bigg ( \pi_0(X_0) \leftrightarrow \pi_0(X_1) \leftrightarrow \cdots \leftrightarrow \pi_0(X_n) \bigg ).$$

A central object in this analysis is the inverse limit $\varprojlim \pi_0(ZX)$. Recall that an element $\Psi \in \varprojlim \pi_0(ZX)$ is a collection
\begin{align*}
\Psi_{s_i} &\in \pi_0(X_i) \text{ for } 0 \leq i \leq n,
\end{align*}
such that, for all $i$, it holds that ${\beta_{i,i+1}}_*(\Psi_{s_{\partial^-(i)}}) = \Psi_{s_{\partial^+(i)}}$ where ${\beta_{i,i+1}}_* : \pi_0(X_{\partial^-(i)}) \rightarrow \pi_0(X_{\partial^+(i)})$ is the map on $\pi_0$ induced by the map $\beta_{i,i+1}$ defined in \eqref{eq:betadirected}.

The connected components $\pi_0(\G f)$  of the space of sections $\G f$ are related to the inverse limit $\varprojlim \pi_0(ZX)$ via the map
\begin{align*}
\Pi_0 : \pi_0(\G f) &\rightarrow \varprojlim \pi_0(ZX)\\
[\delta] &\mapsto
\{ \Psi_{s_i} := [\delta(s_i)] \in \pi_0(X_i) \text{ for }  i = 0,\ldots,n \}. \nonumber
\end{align*}

Note that $\pi_0(\G f)$ is the set of fiber-preserving homotopy classes of sections of $f$.

The map $\Pi_0$ is surjective, as we prove in Proposition~\ref{prp:surjectivity}. The idea of the proof is as follows. We must lift a given $\Psi \in \varprojlim \pi_0(ZX)$ through $\Pi_0$ to a section $I \rightarrow X$ of $f$. For each cobordism $X_i^{i+1} = f^{-1}([s_i,s_{i+1}])$ of manifolds with boundary $X_i = f^{-1}(s_i)$ and $X_{i+1} = f^{-1}(s_{i+1})$, Lemma~\ref{lem:singlecriticalpointlift} provides a lift over the interval $[s_i,s_{i+1}]$. These lifts agree at the regular values $s_i$, hence they fit together to form the desired lift of $\Psi$.

\begin{prp} \label{prp:surjectivity} 
The map $\Pi_0 : \pi_0(\G f) \rightarrow \varprojlim \pi_0(ZX)$ is surjective.
\end{prp}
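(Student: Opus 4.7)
The plan is to lift a given element $\Psi \in \varprojlim \pi_0(ZX)$ to a section of $f$ by building the section piecewise over each subinterval $[s_i,s_{i+1}]$ and then concatenating. Specifically, for each $i = 0,\ldots,n$, choose a point $x_i \in X_i$ lying in the connected component $\Psi_{s_i} \in \pi_0(X_i)$. The goal is to produce, for each $i = 0,\ldots,n-1$, a section $\delta_i : [s_i,s_{i+1}] \to X_i^{i+1}$ of the restricted map $f|_{X_i^{i+1}} : X_i^{i+1} \to [s_i,s_{i+1}]$ with prescribed endpoints $\delta_i(s_i) = x_i$ and $\delta_i(s_{i+1}) = x_{i+1}$. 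Concatenation then yields a continuous section $\delta : [0,1] \to X$ (continuity is automatic since adjacent pieces agree at the splice points $s_i$), and by construction $[\delta(s_i)] = [x_i] = \Psi_{s_i}$, so $\Pi_0([\delta]) = \Psi$.

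The main step is invoking Lemma~\ref{lem:singlecriticalpointlift} on each cobordism $f|_{X_i^{i+1}} : X_i^{i+1} \to [s_i,s_{i+1}]$. This restricted map is itself a tame function, and by the choice of interleaving values $s_i$ it has exactly one critical point on the boundary, of either type D or type N. Since there is only one critical point, the hypothesis of Lemma~\ref{lem:singlecriticalpointlift} that all critical points have the same type is trivially satisfied. The lemma then produces a section with specified endpoints, provided $x_i$ and $x_{i+1}$ lie in the same connected component of $X_i^{i+1}$.

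This connectedness condition is exactly what the inverse limit compatibility provides, and verifying it is the key observation. Recall that $\beta_{i,i+1} : X_{\partial^-(i)} \to X_{\partial^+(i)}$ was constructed from a deformation retraction $H$ of $X_i^{i+1}$ onto $X_{\partial^+(i)}$; in particular, for any $x \in X_{\partial^-(i)}$, the point $\beta_{i,i+1}(x)$ is connected to $x$ inside $X_i^{i+1}$ by the retraction path. Hence ${\beta_{i,i+1}}_*([x]) = [y]$ in $\pi_0(X_{\partial^+(i)})$ implies that $x$ and $y$ lie in the same component of $X_i^{i+1}$. Applying this with $x = x_{\partial^-(i)}$ and $y = x_{\partial^+(i)}$, the compatibility relation ${\beta_{i,i+1}}_*(\Psi_{s_{\partial^-(i)}}) = \Psi_{s_{\partial^+(i)}}$ defining membership in $\varprojlim \pi_0(ZX)$ guarantees that $x_i$ and $x_{i+1}$ are in the same component of $X_i^{i+1}$, as required.

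There is no substantial obstacle here beyond correctly identifying what the inverse limit condition buys us; everything else is routine given the preparatory lemmas. The only mild care needed is to note that Lemma~\ref{lem:singlecriticalpointlift} allows one to prescribe both endpoints freely within the same component, which is precisely what makes the endpoints of consecutive pieces match up so that concatenation yields a well-defined continuous section.
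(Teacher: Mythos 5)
Your proposal is correct and follows essentially the same route as the paper: choose representative points in each $\Psi_{s_i}$, apply Lemma~\ref{lem:singlecriticalpointlift} on each single-critical-point cobordism $X_i^{i+1}$, and concatenate. Your spelled-out justification that the compatibility condition ${\beta_{i,i+1}}_*(\Psi_{s_{\partial^-(i)}}) = \Psi_{s_{\partial^+(i)}}$ forces $x_i$ and $x_{i+1}$ to lie in the same component of $X_i^{i+1}$ (via the retraction path) is exactly the point the paper asserts more tersely.
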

\begin{proof}
Let $\Psi \in \varprojlim \pi_0(ZX)$ given by $\Psi_{s_i} \in \pi_0(X_i) \text{ for } 0 \leq i \leq n$. To prove the lemma, we must lift $\Psi$ through $\Pi_0$ to a continuous section $\delta : [0,1] \rightarrow X$ satisfying
\begin{equation} \label{eq:liftofPi}
[\delta(s_i)] = \Psi_{s_i}
\end{equation}
for all $0 \leq i \leq n$.

For each $0 \leq i \leq n$, choose a lift of $\Psi_{s_i}$, i.e.\ a point
$$\delta_i \in \Psi_{s_i}.$$
For each $0 \leq i \leq n-1$, the restriction
$$f|_{X_i^{i+1}} : X_i^{i+1} \rightarrow [s_i,s_{i+1}]$$
is a tame function on the compact cobordism $X_i^{i+1}$ between the manifolds with boundary $X_i$ and $X_{i+1}$.
We claim that the hypotheses of Lemma~\ref{lem:singlecriticalpointlift} hold, providing a continuous section
$$\delta_{i,i+1} : [s_i,s_{i+1}] \rightarrow X_i^{i+1}$$
such that
$$\delta_{i,i+1}(s_i) = \delta_i \text{ and } \delta_{i,i+1}(s_{i+1}) = \delta_{s_{i+1}}.$$
Indeed, since $[s_i,s_{i+1}]$ contains exactly $1$ critical value of $f|_B$ by the choice of interleaving regular values $s_i$, the function $f|_{X_i^{i+1}}$ has exactly $1$ critical point on the boundary. Moreover, $\delta_i$ and $\delta_{i+1}$ are contained in the same connected component of $X_i^{i+1}$ since ${\beta_{i,i+1}}_*(\Psi_{s_{\partial^-(i)}}) = \Psi_{s_{\partial^+(i)}}$.

The endpoints of the sections $\delta_{i,i+1}$ agree, i.e., $\delta_{i,i+1}(s_{i+1}) = \delta_{i+1,i+2}(s_{i+1})$ for all $0 \leq i \leq n-2$. Hence they fit together to form a continuous section $\delta : I \rightarrow X$ of $f$ which satisfies \eqref{eq:liftofPi}. Hence $\Pi_0([\delta]) = \Psi$ and the proof is complete.
\end{proof}

To calculate $\pi_0(\G f)$, it remains to characterize the fibers of the surjection $\Pi_0 : \pi_0(\G f) \rightarrow \varprojlim \pi_0(Z X)$.

Let $\Psi \in \varprojlim \pi_0(ZX)$. Choose a fixed `basepoint' section $\mathfrak{b} \in \G f$ such that
$$\Pi_0([\mathfrak{b}]) = \Psi,$$
which means that
$$\mathfrak{b}(s_i) \in \Psi_{s_i} \text{ for } 0 \leq i \leq n.$$
As an intermediate object in our analysis, we consider the subspace of sections that agree with $\mathfrak{b}$ at regular values $s_i$; precisely, we define
$$\G f (\mathfrak{b}) := \{ \delta \in \G f \,\, | \,\, \delta(s_i) = \mathfrak{b}(s_i) \text{ for all } i \}.$$
This space splits as the Cartesian product of the spaces of sections of the restrictions $f|_{X_i^{i+1}} : X_i^{i+1} \rightarrow [s_i,s_{i+1}]$ with endpoints agreeing with $\mathfrak{b}$, i.e.\
$$\G f (\mathfrak{b}) = \prod_{i=0}^{n-1} \G f|_{X_i^{i+1}} ( \mathfrak{b}|_{[s_i,s_{i+1}]}).$$
Notice that, for every $\delta \in \G f (\mathfrak{b})$, we have $\Pi_0([\delta]) = \Psi$. It follows that the inclusion $\i : \G f (\mathfrak{b})  \hookrightarrow \G f$ induces a map
$$\pi_0(\G f (\mathfrak{b})) \xrightarrow{\i_*} \Pi_0^{-1}(\Psi) \subset \pi_0(\G f).$$

\begin{prp} \label{prp:surjectivityontofiber}
The map $\pi_0(\G f (\mathfrak{b})) \xrightarrow{\i_*} \Pi_0^{-1}(\Psi)$ is surjective.
\end{prp}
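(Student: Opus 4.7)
Given $[\delta] \in \Pi_0^{-1}(\Psi)$, the goal is to produce a representative $\delta' \in \G f(\mathfrak{b})$ that is fiberwise homotopic with free endpoints to $\delta$. The hypothesis $\Pi_0([\delta]) = \Psi = \Pi_0([\mathfrak{b}])$ says precisely that $\delta(s_i)$ and $\mathfrak{b}(s_i)$ lie in the same connected component of $X_i$ for every $0 \leq i \leq n$. The plan is to cure the mismatches $\delta(s_i) \neq \mathfrak{b}(s_i)$ one at a time by boundary or interior splicings supported in disjoint $\epsilon$-neighborhoods of the $s_i$, each of which preserves the free-endpoint fiberwise homotopy class by Lemma~\ref{lem:freehomotopytypeindependentsplicing}.

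Concretely, choose continuous paths as follows: for $i = 0$, a path $\gamma_0 : [0,1] \to X_0$ with $\gamma_0(0) = \mathfrak{b}(s_0)$ and $\gamma_0(1) = \delta(s_0)$; for $1 \leq i \leq n-1$, a path $\gamma_i : [0,1] \to X_i$ with $\gamma_i(0) = \mathfrak{b}(s_i)$ and $\gamma_i(1) = \delta(s_i)$; and for $i = n$, a path $\gamma_n : [0,1] \to X_n$ with $\gamma_n(0) = \delta(s_n)$ and $\gamma_n(1) = \mathfrak{b}(s_n)$. Since the $s_i$ are finitely many, fix $\epsilon > 0$ small enough that their $\epsilon$-neighborhoods in $[0,1]$ are pairwise disjoint and each is contained in a subinterval on which Lemma~\ref{lem:trivialcobordismnearregularvalue} applies.

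Construct $\delta'$ iteratively: starting from $\delta$, perform the left boundary $\epsilon$-splicing with $\gamma_0$, then the interior $\epsilon$-splicings with $\gamma_i$ at $s_i$ for $i = 1, \ldots, n-1$, and finally the right boundary $\epsilon$-splicing with $\gamma_n$. Because the $\epsilon$-neighborhoods are pairwise disjoint, each splicing alters the section only inside its own neighborhood; consequently, after any subset of the splicings has been carried out, the current section still takes the value $\delta(s_j)$ at every $s_j$ whose splicing has not yet occurred, so the endpoint condition required to perform the next splicing remains valid. Inspection of the formulas in Definition~\ref{dfn:leftboundarysplicingepsilonformula} yields $\delta'(s_0) = \gamma_0(0) = \mathfrak{b}(s_0)$, $\delta'(s_i) = \gamma_i(0) = \mathfrak{b}(s_i)$ for $1 \leq i \leq n-1$, and $\delta'(s_n) = \gamma_n(1) = \mathfrak{b}(s_n)$, so $\delta' \in \G f(\mathfrak{b})$. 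Applying Lemma~\ref{lem:freehomotopytypeindependentsplicing} once for each splicing produces a chain of free-endpoint fiberwise homotopies from $\delta$ to $\delta'$, hence $\i_*[\delta'] = [\delta]$ in $\pi_0(\G f)$, and every element of $\Pi_0^{-1}(\Psi)$ lies in the image of $\i_*$.

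The main obstacle is purely bookkeeping: ensuring disjointness of the splicing neighborhoods and verifying that the matching conditions at the $s_j$ are preserved from one iteration to the next. The substantive content, namely that an individual splicing does not alter the free-endpoint fiberwise homotopy class of a section, is already packaged in Lemma~\ref{lem:freehomotopytypeindependentsplicing}.
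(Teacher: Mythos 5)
Your proof is correct and follows essentially the same route as the paper: choose paths $\gamma_i$ in the components $\Psi_{s_i}$ connecting $\mathfrak{b}(s_i)$ to $\delta(s_i)$, modify $\delta$ by one left boundary splicing, $n-1$ interior splicings, and one right boundary splicing to land in $\G f(\mathfrak{b})$, and invoke Lemma~\ref{lem:freehomotopytypeindependentsplicing} once per splicing to see the free-endpoint fiberwise homotopy class is unchanged. The only cosmetic difference is that the paper builds the spliced section all at once by gluing the pieces $\g_i \,\#_-^{\epsilon}\, \delta|_{[s_i,s_{i+1}]} \,\#_+^{\epsilon}\, \overline{\g_{i+1}}$ over the subintervals $[s_i,s_{i+1}]$ and then recognizes it as the iterated splicing, whereas you carry out the splicings iteratively with the disjointness bookkeeping made explicit.
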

\begin{proof}
Let $\delta \in \G f$ such that $[\delta] \in \Pi_0^{-1}(\Psi)$. This means that $\delta(s_i) \in \Psi_{s_i}$ for all $i = 0,\ldots,n$. Since also $\mathfrak{b}(s_i) \in \Psi_{s_i}$ for all $i$, there is a continuous path $\gamma_i : [0,1] \rightarrow \Psi_{s_i}$ from $\g_i(0) = \mathfrak{b}(s_i)$ to $\g_i(1) = \delta(s_i)$. The idea is to splice $\g_i$ into $\delta$ on the left of $s_i$ and to splice $\overline{\g}_i(\cdot) := \g_i(1-\cdot)$ into $\delta$ on the right of $s_i$, producing a new section $\hat{\delta} \in \G f (\mathfrak{b})$, i.e.\ $\hat{\delta}(s_i) = \mathfrak{b}(s_i)$ for all $i$. Moreover, in the space of sections $\G f$ with no point restrictions, this spliced section $\hat{\delta}$ is in the same path component as $\delta$, i.e.\ $\hat{\delta}$ is fiberwise homotopic to $\delta$ by Lemma~\ref{lem:freehomotopytypeindependentsplicing}. Then $\i_*([\hat{\delta}]) = [\delta]$ and the proof is complete.

We precisely construct the spliced section $\hat{\delta}$ and verify the claimed properties. Restricting $\delta$ to subintervals produces a section $\delta_i : [s_i,s_{i+1}] \rightarrow X_i^{i+1}$ of the tame function $f|_{X_i^{i+1}} \rightarrow [s_i,s_{i+1}]$ for $i = 0,\ldots,n-1$. Form the sections
$$\hat{\delta}_i := \big ( \g_i \,\,\#_-^{\epsilon}\,\, \delta_i \,\,\#_+^{\epsilon}\,\, \overline{\g_{i+1}} \big ) : [s_i,s_{i+1}] \rightarrow X_i^{i+1}$$
for some $\epsilon > 0$ small enough. Then for all $0 \leq i \leq n-1$ it holds that $\hat{\delta}_i(s_i) = \mathfrak{b}(s_i)$ and $\hat{\delta}_i(s_{i+1}) = \mathfrak{b}(s_{i+1})$. In particular, $\hat{\delta}_i(s_{i+1}) = \hat{\delta}_{i+1}(s_{i+1})$, and hence the $\hat{\delta}_i$ fit together to form a section $\hat{\delta} : [0,1] \rightarrow X$ of $f$. Moreover, $\hat{\delta} \in \G f (\mathfrak{b})$ since $\hat{\delta}(s_i) = \mathfrak{b}(s_i)$ for all $i$.

It remains to show that $\hat{\delta}$ is fiberwise homotopic to $\delta$ with no point restrictions. Observe that $\hat{\delta}$ is formed by starting with $\delta$, performing a left boundary splicing with $\g_0$, then performing an interior splicing with $\g_i$ for all $1 \leq i \leq n-1$, and finally performing a right boundary splicing with $\gamma_n$. Hence the claim follows from repeated application of Lemma~\ref{lem:freehomotopytypeindependentsplicing}.
\end{proof}

Consider the direct product group $\prod_{i=0}^n \pi_1(X_i,\mathfrak{b}(s_i))$. There is a group action on the set $\pi_0(\G f (\mathfrak{b}) )$, denoted by $\star$ and defined in \eqref{eq:actiononbasedsections},
$$\star : \prod_{i=0}^n \pi_1(X_i,\mathfrak{b}(s_i)) \times \pi_0(\G f (\mathfrak{b}) ) \rightarrow \pi_0(\G f (\mathfrak{b}) ).$$
The orbits of this action are exactly the fibers of $\i_*$, as we prove in Proposition~\ref{prp:orbitsarefibers}. Hence, due to Proposition~\ref{prp:surjectivityontofiber} above, $\Pi_0^{-1}(\Psi)$ is naturally in bijection with the set of orbits.

We proceed to define the claimed group action and establish its properties. The action of $\t = (\t_0,\ldots,\t_n) \in \prod_{i=0}^n \pi_1(X_i,\mathfrak{b}(s_i))$ on a class $[\delta] \in \pi_0(\G f (\mathfrak{b}) )$ is defined by representing $\delta$ by its restrictions $\delta_i := \delta|_{[s_i,s_{i+1}]} \in \G f|_{X_i^{i+1}}(\mathfrak{b}|_{[s_i,s_{i+1}]})$ for $i = 0,\ldots,n-1$ and taking the spliced homotopy class
\begin{equation} \label{eq:actiononbasedsections}
\t \star [\delta] := (\t_0 \#_- [\delta_0] \#_+ \t_1^{-1}, \t_1 \#_- [\delta_1] \#_+ \t_2^{-1}, \ldots, \t_{n-1} \#_- [\delta_{n-1}] \#_+ \t_n^{-1}).
\end{equation}

\begin{prp} \label{prp:orbitsarefibers}
The orbits of the group action $\star$ are the fibers of $\i_*$. Precisely, this means that $\cO \subset \pi_0(\G f(\mathfrak{b}))$ is an orbit of $\star$ if and only if $\cO = \i_*^{-1}([\delta])$ for some $[\delta] \in \Pi_0^{-1}(\Psi) \subset \pi_0(\G f)$.

In particular, $\i_*$ induces a bijection between the set of orbits of $\star$ and $\Pi_0^{-1}(\Psi)$.
\end{prp}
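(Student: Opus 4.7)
The plan is to prove the two inclusions separately: every $\star$-orbit is contained in a single fiber of $\iota_*$, and every fiber of $\iota_*$ is contained in a single $\star$-orbit. Granted these, the final bijection statement follows formally by combining with Proposition~\ref{prp:surjectivityontofiber}.

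For the first inclusion (orbits contained in fibers), I would fix $[\delta] \in \pi_0(\G f(\mathfrak{b}))$ and $\tau = (\tau_0,\ldots,\tau_n) \in \prod_{i=0}^n \pi_1(X_i,\mathfrak{b}(s_i))$ and show that $\tau \star [\delta]$ and $[\delta]$ represent the same class in $\pi_0(\G f)$. Unpacking the definition of $\star$ in \eqref{eq:actiononbasedsections}, the spliced section representing $\tau \star [\delta]$ is obtained from $\delta$ by a left boundary splicing with $\tau_0$ at $s_0$, a right boundary splicing with $\tau_n^{-1}$ at $s_n$, and, at each interior regular value $s_i$ for $1 \leq i \leq n-1$, adjacent splicings of $\tau_i^{-1}$ on the right of $\delta_{i-1}$ and $\tau_i$ on the left of $\delta_i$. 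These two adjacent splicings combine, in a trivial cobordism near $s_i$, into one interior $\epsilon'$-splicing (Definition~\ref{dfn:leftboundarysplicingepsilonformula}) with the concatenation $\tau_i \cdot \tau_i^{-1}$, which is nullhomotopic rel basepoint in $X_i$. By Lemma~\ref{lem:freehomotopytypeindependentsplicing}, each of these splicings — boundary and interior — preserves the free fiberwise homotopy class, so $\tau \star [\delta] = [\delta]$ in $\pi_0(\G f)$, which is exactly $\iota_*(\tau \star [\delta]) = \iota_*([\delta])$.

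For the converse (fibers contained in orbits), suppose $\iota_*([\delta]) = \iota_*([\delta'])$ for $[\delta], [\delta'] \in \pi_0(\G f(\mathfrak{b}))$. Then there is a fiberwise homotopy $h : [0,1] \times [0,1] \to X$ from $\delta$ to $\delta'$ with $f \circ h(r,t) = t$. Define $\gamma_i(r) := h(r,s_i)$; since $\delta$ and $\delta'$ both lie in $\G f(\mathfrak{b})$, we have $\gamma_i(0) = \gamma_i(1) = \mathfrak{b}(s_i)$, so $\gamma_i$ represents a class $\tau_i \in \pi_1(X_i,\mathfrak{b}(s_i))$. Restrict $h$ to $[0,1] \times [s_i,s_{i+1}]$ to obtain a free-endpoint fiberwise homotopy from $\delta_i$ to $\delta_i'$ in $X_i^{i+1}$, with boundary paths $\gamma_i$ and $\gamma_{i+1}$. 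Lemma~\ref{lem:endpointfreetoendpointfixed} then yields an endpoint-preserving fiberwise homotopy from $\gamma_i \,\#_-^\epsilon\, \delta_i' \,\#_+^\epsilon\, \overline{\gamma}_{i+1}$ to $\delta_i$; passing to homotopy classes, this reads $\tau_i \#_- [\delta_i'] \#_+ \tau_{i+1}^{-1} = [\delta_i]$ in $\pi_0(\G f|_{X_i^{i+1}}(\mathfrak{b}|_{[s_i,s_{i+1}]}))$. Assembling these equalities across all $i$ gives $\tau \star [\delta'] = [\delta]$, so $[\delta]$ and $[\delta']$ lie in a common $\star$-orbit.

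The main obstacle I anticipate is the first direction: carefully justifying that two adjacent boundary splicings (on the right of $\delta_{i-1}$ and on the left of $\delta_i$, each in its own trivial cobordism) can be repackaged as one interior splicing with the nullhomotopic loop $\tau_i \cdot \tau_i^{-1}$, so that Lemma~\ref{lem:freehomotopytypeindependentsplicing} applies. This is geometrically transparent — the two splicings happen in a single trivial cobordism around $s_i$ and the graph of the nullhomotopy fills in the loop — but it requires either a short explicit calculation with the splicing formulas, or an appeal to reparametrization invariance plus Lemma~\ref{lem:constantmapsplicing} to collapse a constant segment separating them. Once this is in place, the conclusion that $\iota_*$ descends to a bijection between orbits and $\Pi_0^{-1}(\Psi)$ is immediate: surjectivity is Proposition~\ref{prp:surjectivityontofiber}, and the two inclusions above show that the $\iota_*$-preimages of points coincide with the $\star$-orbits.
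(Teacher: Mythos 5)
Your proposal is correct and follows essentially the same route as the paper: one direction via Lemma~\ref{lem:freehomotopytypeindependentsplicing} applied to the boundary and interior splicings making up a representative of $\tau \star [\delta]$, the other via Lemma~\ref{lem:endpointfreetoendpointfixed} applied to the restrictions of the fiberwise homotopy to each $[s_i,s_{i+1}]$, with the bijection then following from Proposition~\ref{prp:surjectivityontofiber}. The obstacle you anticipate is in fact a non-issue, since Definition~\ref{dfn:leftboundarysplicingepsilonformula} defines the interior $\epsilon$-splicing $\cI^{\epsilon}(\delta,\tau_i,s_i)$ to be exactly the pair of adjacent boundary splicings of $\overline{\tau}_i$ and $\tau_i$, so no repackaging argument is needed before invoking Lemma~\ref{lem:freehomotopytypeindependentsplicing}.
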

\begin{proof}
Let $\delta, \hat{\delta} \in \G f (\mathfrak{b})$. It must be shown that $\delta$ is fiberwise homotopic to $\hat{\delta}$ if and only if there exists an element $\t \in \prod_{i=0}^n \pi_i(X_i, \mathfrak{b}(s_i))$ satisfying $\t \star [\delta] = [\hat{\delta}]$.

By the definition \eqref{eq:actiononbasedsections} of the action, for any $\t$ and $\delta$ we see that a representative section of $\t \star [\delta]$ is a sequence of a left boundary splicing with $\t_0$, followed by interior splicing of $\t_i$ at the regular values $s_i$ for all $1 \leq i \leq n-1$, followed by a right boundary splicing of $\t_n$. Hence by Lemma~\ref{lem:freehomotopytypeindependentsplicing} it is fiberwise homotopic to $\delta$. So, if it is assumed that $\t \star [\delta] = [\hat{\delta}]$, it follows that $\hat{\delta}$ is fiberwise homotopic to $\delta$. This proves one direction of the if and only if statement.

To prove the other direction, suppose that $\delta$ is fiberwise homotopic to $\hat{\delta}$. For each $i = 0,\ldots,n-1$, this fiberwise homotopy restricts to a homotopy from the section $\delta_i := \delta|_{[s_i,s_{i+1}]} : [s_i,s_{i+1}] \rightarrow X_i^{i+1}$ to the section $\hat{\delta}_i := \hat{\delta}|_{[s_i,s_{i+1}]}$. Then, Lemma~\ref{lem:endpointfreetoendpointfixed} provides loops $\t_i : [0,1] \rightarrow X_i$ based at $\mathfrak{b}(s_i)$ for $i = 0,\ldots,n$ and a fiberwise homotopy from
$\t_i \,\,\#_-^{\epsilon}\,\, \delta_i \,\,\#_+^{\epsilon}\,\, {\t}^{-1}_{i+1}$ to $\hat{\delta}_i$ that preserves the endpoints at $\mathfrak{b}(s_i)$ and $\mathfrak{b}(s_{i+1})$. Since these homotopies are endpoint preserving, setting $\t := (\t_0,\ldots,\t_n)$ they fit together to provide a homotopy from a representative of $\t \star [\delta]$ to  $\hat{\delta}$ within the space $\G f(\mathfrak{b})$. Hence $\t \star [\delta] = [\hat{\delta}]$, as required.
\end{proof}

In light of Proposition~\ref{prp:orbitsarefibers}, to calculate $\Pi_0^{-1}(\Psi)$ it suffices to understand the group action $\star$. The group acting is defined in the regular fibers of $f$ -- it is a product of fundamental groups of fibers -- however the set $\pi_0(\G f (\mathfrak{b}))$ and the action have not yet been algebraically described in terms of homotopy theoretic information about the fibers. We proceed to do this now. First, we bijectively identify $\pi_0(\G f (\mathfrak{b}))$ with a product of fundamental groups of fibers (Proposition~\ref{prp:bijectionfiberwithpoint}), and then we understand the group action $\star$ in terms of composition of loops in these fundamental groups (Proposition~\ref{prp:algebraicaction}). The resulting characterization of $\Pi_0^{-1}(\Psi)$ is summarized in Theorem~\ref{thm:computationconnectedcomponents}.

Every section $\delta \in \G f (\mathfrak{b})$ restricts to a section $\delta_i : [s_i,s_{i+1}] \rightarrow X_i^{i+1}$ of $f|_{X_i^{i+1}} \rightarrow [s_i,s_{i+1}]$ for every $i = 0,\ldots,n-1$. Let
$$\G C_i : \pi_0(\G f|_{X_i^{i+1}} (\mathfrak{b}|_{[s_i,s_{i+1}]})) \rightarrow \pi_1(X_{\partial^+(i)},\mathfrak{b}(s_{\partial^+(i)}))$$ be the section collapse map defined in \eqref{eq:sectioncollapsemap}. By Lemma~\ref{lem:sectioncollapsebijectivity}, each $\G C_i$ is a bijection. This implies the following characterization of $\pi_0(\G f(\mathfrak{b}))$ in terms of the fundamental groups of the regular fibers of $f$.

\begin{prp} \label{prp:bijectionfiberwithpoint}
The product of section collapse maps
\begin{align*}
\G C : \pi_0(\G f(\mathfrak{b}))  &\rightarrow \prod_{i=0}^{n-1} \pi_1(X_{\partial^+(i)}, \mathfrak{b}(s_{\partial^+(i)}))\\
([\delta_0],\ldots,[\delta_{n-1}]) &\mapsto (\G C_0([\delta_0]),\ldots, \G C_{n-1}([\delta_{n-1}]))
\end{align*}
is bijective.
\end{prp}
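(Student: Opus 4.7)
The plan is to reduce the proposition to the single-critical-point case already handled by Lemma~\ref{lem:sectioncollapsebijectivity}, using the product structure of $\Gamma f(\mathfrak{b})$.

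First I would observe that, as noted just before Proposition~\ref{prp:surjectivityontofiber}, the restriction map provides a homeomorphism
$$\Gamma f(\mathfrak{b}) \;\xrightarrow{\;\cong\;}\; \prod_{i=0}^{n-1} \Gamma f|_{X_i^{i+1}}(\mathfrak{b}|_{[s_i,s_{i+1}]}),$$
since a section of $f$ that agrees with $\mathfrak{b}$ at each $s_i$ is precisely a tuple of sections of the $f|_{X_i^{i+1}}$ with matching endpoints $\mathfrak{b}(s_i)$, and the compact-open topology is compatible with this identification. Because $\pi_0$ commutes with finite products of topological spaces, this gives a bijection
$$\pi_0\bigl(\Gamma f(\mathfrak{b})\bigr) \;\cong\; \prod_{i=0}^{n-1}\pi_0\bigl(\Gamma f|_{X_i^{i+1}}(\mathfrak{b}|_{[s_i,s_{i+1}]})\bigr),$$
under which the map $\Gamma C$ in the statement is literally the product of the individual section collapse maps $\Gamma C_i$.

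Next, I would verify that Lemma~\ref{lem:sectioncollapsebijectivity} applies to each factor. By the choice of interleaving values $s_0 < t_1 < s_1 < \cdots < t_n < s_n$, the restriction $f|_{X_i^{i+1}} : X_i^{i+1} \to [s_i, s_{i+1}]$ is a tame function on the cobordism $X_i^{i+1}$ between the manifolds with boundary $X_i$ and $X_{i+1}$, and its boundary restriction has exactly one critical point, which is either of type $D$ or of type $N$ (as recorded by $\partial^+(i)$). This is precisely the hypothesis of Lemma~\ref{lem:sectioncollapsebijectivity}, which therefore guarantees that each $\Gamma C_i$ is a bijection onto $\pi_1(X_{\partial^+(i)}, \mathfrak{b}(s_{\partial^+(i)}))$.

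The conclusion then follows: a product of bijections is a bijection, so $\Gamma C$ is bijective. There is no real obstacle here; all of the analytic content — constructing sections that realize prescribed loops and showing that collapsed sections determine fiberwise homotopy classes — has already been absorbed into Lemma~\ref{lem:relationshipsplicingcollapse} and Lemma~\ref{lem:sectioncollapsebijectivity}. The only point requiring any care is the product decomposition of $\Gamma f(\mathfrak{b})$, which is immediate from the definition of this space as those sections pinned at $\mathfrak{b}(s_i)$ at every regular value $s_i$.
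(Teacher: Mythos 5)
Your proposal is correct and follows essentially the same route as the paper: the paper also uses the product decomposition $\G f(\mathfrak{b}) = \prod_{i=0}^{n-1} \G f|_{X_i^{i+1}}(\mathfrak{b}|_{[s_i,s_{i+1}]})$ (stated just before Proposition~\ref{prp:surjectivityontofiber}) and then cites Lemma~\ref{lem:sectioncollapsebijectivity} for the bijectivity of each factor $\G C_i$. Your write-up merely makes explicit the compatibility of $\pi_0$ with the product and the verification that each $X_i^{i+1}$ contains exactly one boundary critical point, both of which the paper leaves implicit.
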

\begin{proof}
Each map $\G C_i$ for $i = 0,\ldots,n-1$ is bijective by Lemma~\ref{lem:sectioncollapsebijectivity}.
\end{proof}

There is a group action
$$\hat{\star} : \prod_{i=0}^n \pi_1(X_i,\mathfrak{b}(s_i)) \times \prod_{i=0}^{n-1} \pi_1(X_{\partial^+(i)},\mathfrak{b}(s_{\partial^+(i)})) \rightarrow \prod_{i=0}^{n-1} \pi_1(X_{\partial^+(i)},\mathfrak{b}(s_{\partial^+(i)}))$$ defined as follows. The collapse maps $C_i$ from \eqref{eq:collapsemap} induce maps
\begin{align*}
\pi_1(X_i,\mathfrak{b}(s_i)) &\rightarrow \pi_1(X_{\partial^+(i)},\mathfrak{b}(s_{\partial^+(i)})),\\
\pi_1(X_{i+1},\mathfrak{b}(s_{i+1})) &\rightarrow \pi_1(X_{\partial^+(i)},\mathfrak{b}(s_{\partial^+(i)})).
\end{align*}
The action $\hat{\star}$ is defined for $\t := (\t_0,\ldots,\t_n) \in \prod_{i=0}^n \pi_1(X_i,\mathfrak{b}(s_i))$ and $\eta := (\eta_0,\ldots,\eta_{n-1}) \in \prod_{i=0}^{n-1} \pi_1(X_{\partial^+(i)},\mathfrak{b}(s_{\partial^+(i)}))$ by
\begin{equation*} \label{eq:actionononproductofcolimits}
\t  \,\,\hat{\star}\,\, \eta := (C_0(\t_0) \cdot \eta_0 \cdot C_0(\t_1)^{-1}, \ldots, C_{n-1}(\t_{n-1}) \cdot \eta_{n-1} \cdot C_{n-1}(\t_n)^{-1}).
\end{equation*}

The actions $\star$ and $\hat{\star}$ are identified by the bijection $\G C$ from Proposition~\ref{prp:bijectionfiberwithpoint}, as we now prove.

\begin{prp} \label{prp:algebraicaction}
For $\t \in \prod_{i=0}^n \pi_1(X_i,\mathfrak{b}(s_i))$ and $\delta \in \G f(\mathfrak{b})$,
$$\G C(\t \star [\delta]) = \t  \,\,\hat{\star}\,\, \G C([\delta]).$$
\end{prp}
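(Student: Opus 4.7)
The plan is to reduce the identity $\Gamma C(\tau \star [\delta]) = \tau \,\hat{\star}\, \Gamma C([\delta])$ to a componentwise application of Lemma~\ref{lem:relationshipsplicingcollapse}. The two sides are tuples indexed by $i = 0,\ldots,n-1$, and the splicings in the definition of $\star$ only modify $\delta_i = \delta|_{[s_i,s_{i+1}]}$ in the $i$th slot (left splicing with $\tau_i$ and right splicing with $\tau_{i+1}^{-1}$), independently across $i$. So it suffices to prove, for each $i$, the single-cobordism identity
\begin{equation*}
\Gamma C_i\!\left( \tau_i \,\#_-\, [\delta_i] \,\#_+\, \tau_{i+1}^{-1} \right) \;=\; C_i(\tau_i) \cdot \Gamma C_i([\delta_i]) \cdot C_i(\tau_{i+1})^{-1},
\end{equation*}
where the product on the right is taken in $\pi_1(X_{\partial^+(i)}, \mathfrak{b}(s_{\partial^+(i)}))$.

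First I would fix $i$ and view $f|_{X_i^{i+1}} : X_i^{i+1} \to [s_i, s_{i+1}]$ as a tame function containing a single boundary critical point, of a single type (D or N). Assume the type is D (the N case is symmetric by reversing orientation along $[s_i,s_{i+1}]$); then in the notation of Lemma~\ref{lem:relationshipsplicingcollapse} we have $X_- = X_i$, $X_+ = X_{i+1}$, and $X_{\partial^+(i)} = X_i$. Setting $\gamma_- := \tau_i \in \pi_1(X_i,\mathfrak{b}(s_i))$ and $\gamma_+ := \tau_{i+1}^{-1} \in \pi_1(X_{i+1},\mathfrak{b}(s_{i+1}))$ in Lemma~\ref{lem:relationshipsplicingcollapse} yields exactly
\begin{equation*}
\Gamma C_i\!\left( \tau_i \,\#_-\, \delta_i \,\#_+\, \tau_{i+1}^{-1} \right) = \tau_i \cdot \Gamma C_i(\delta_i) \cdot C_i(\tau_{i+1}^{-1}) = \tau_i \cdot \Gamma C_i(\delta_i) \cdot C_i(\tau_{i+1})^{-1},
\end{equation*}
using that the loop collapse map $C_i$ is a group homomorphism on $\pi_1$.

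The one remaining reconciliation is that the definition of $\hat{\star}$ writes the first factor as $C_i(\tau_i)$, not $\tau_i$. But in the type D case, $\tau_i$ already lies in $\pi_1(X_{\partial^+(i)}, \mathfrak{b}(s_{\partial^+(i)})) = \pi_1(X_i, \mathfrak{b}(s_i))$, and the collapse map $C_i : X_i^{i+1} \to X_i$ is a left inverse of the inclusion $X_i \hookrightarrow X_i^{i+1}$ (by Lemma~\ref{lem:defretracttoregvalues} and the construction \eqref{eq:collapsemap}), so $C_i(\tau_i) = \tau_i$ at the level of $\pi_1$. In the type N case the roles of $\tau_i$ and $\tau_{i+1}$ are swapped, with $\tau_{i+1}$ now fixed by $C_i$ and $\tau_i$ pushed forward, and the same identity results after reading off the N-version of Lemma~\ref{lem:relationshipsplicingcollapse}.

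The only real obstacle is bookkeeping: one must track carefully, case by case on the critical point type of each $X_i^{i+1}$, which of $\tau_i$ or $\tau_{i+1}$ plays the role of $\gamma_-$ vs $\gamma_+$ in Lemma~\ref{lem:relationshipsplicingcollapse}, and which is already based in the collapse target. Once this matching is laid out, the proposition follows by taking the product of the componentwise identities over $i = 0,\ldots,n-1$ and comparing with the definition of $\hat{\star}$.
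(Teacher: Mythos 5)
Your proof is correct and follows essentially the same route as the paper: unwind the definitions of $\G C$, $\star$, and $\hat{\star}$, and apply Lemma~\ref{lem:relationshipsplicingcollapse} componentwise, using that the loop collapse map fixes loops already based in the collapse target and is a homomorphism on $\pi_1$. Your extra bookkeeping on the type D versus type N cases is exactly the reconciliation the paper leaves implicit.
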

\begin{proof}
Write $\t = (\t_0,\ldots,\t_n)$ and $[\delta] = ([\delta_0],\ldots,[\delta_{n-1}])$. By definition of $\G C$ and $\star$, we have
$$\G C(\t \star [\delta]) = \big (\G C_0(\t_0 \#_- [\delta_0] \#_+ \t_1^{-1}), \ldots, \G C_{n-1}(\t_{n-1} \#_- [\delta_{n-1}] \#_+ \t_n^{-1}) \big ),$$
which by Lemma~\ref{lem:relationshipsplicingcollapse} is equal to
$$\big ( C_0(\t_0) \cdot \G C_0([\delta_0]) \cdot C_0( \t_1)^{-1}, \ldots, C_{n-1}(\t_{n-1}) \cdot \G C_{n-1}([\delta_{n-1}]) \cdot C_{n-1}(\t_n)^{-1} \big ).$$
The above expression is equal to $\t  \,\,\hat{\star}\,\, \G C([\delta])$ by definition of $\hat{\star}$ and $\G C$.
\end{proof}

The result of the above discussion is the following theorem.

\begin{thm} \label{thm:computationconnectedcomponents}
Let $f : X \rightarrow I = [0,1]$ be a tame function, $ZX$ the associated diagram \eqref{eq:zigzagX} of regular fibers, and $\G f$ the space of sections. Then the natural map $\Pi_0 : \pi_0(\G f) \rightarrow \varprojlim \pi_0(ZX)$ is surjective.

The fibers of $\Pi_0$ are characterized as follows. Let $\Psi \in \varprojlim \pi_0(ZX)$ and $\mathfrak{b} \in \G f$ such that $\Pi_0(\mathfrak{b}) = \Psi$. Then $\Pi_0^{-1}(\Psi)$ is naturally in bijection with the orbits of the action $\hat{\star}$ of the group $\prod_{i=0}^n \pi_1(X_i,\mathfrak{b}(s_i))$ on the set $\prod_{i=0}^{n-1} \pi_1(X_{\partial^+(i)}, \mathfrak{b}(s_{\partial^+(i)}))$.
\hfill$\square$
\end{thm}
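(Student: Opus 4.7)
The plan is to assemble the theorem by chaining together the four bijections already proved in the preceding propositions; essentially no new content needs to be introduced, and the proof is a bookkeeping exercise.

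First, I would dispatch the surjectivity of $\Pi_0$ by direct appeal to Proposition~\ref{prp:surjectivity}, which constructs, for any $\Psi \in \varprojlim \pi_0(ZX)$, a lift $\delta$ by choosing representatives $\delta_i \in \Psi_{s_i}$, applying Lemma~\ref{lem:singlecriticalpointlift} to each restriction $f|_{X_i^{i+1}} : X_i^{i+1} \to [s_i,s_{i+1}]$ (whose restriction to the boundary has exactly one critical point, of pure type), and concatenating the resulting sections, which agree at the $s_i$ by construction.

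For the fiber characterization, fix $\Psi$ and a basepoint $\mathfrak{b} \in \Pi_0^{-1}(\Psi)$. The plan is to factor the identification through the auxiliary space $\G f(\mathfrak{b})$ of sections pinned to $\mathfrak{b}$ at every $s_i$. Proposition~\ref{prp:surjectivityontofiber} gives a surjection $\i_* : \pi_0(\G f(\mathfrak{b})) \to \Pi_0^{-1}(\Psi)$, produced by splicing paths inside each $\Psi_{s_i}$ into a section (using Lemma~\ref{lem:freehomotopytypeindependentsplicing} to see that the splicings do not change the fiberwise homotopy class when endpoints are unconstrained). Proposition~\ref{prp:orbitsarefibers} then identifies the fibers of $\i_*$ with the orbits of the action $\star$ of $\prod_{i=0}^n \pi_1(X_i,\mathfrak{b}(s_i))$ on $\pi_0(\G f(\mathfrak{b}))$, giving a canonical bijection $\pi_0(\G f(\mathfrak{b}))/\star \;\xrightarrow{\sim}\; \Pi_0^{-1}(\Psi)$.

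To pass from the geometric object $\pi_0(\G f(\mathfrak{b}))$ to the algebraic one $\prod_{i=0}^{n-1} \pi_1(X_{\partial^+(i)}, \mathfrak{b}(s_{\partial^+(i)}))$, I would apply the product of section collapse maps $\G C$ of Proposition~\ref{prp:bijectionfiberwithpoint}, which is bijective by Lemma~\ref{lem:sectioncollapsebijectivity} applied factorwise. Proposition~\ref{prp:algebraicaction} shows that $\G C$ intertwines $\star$ with $\hat{\star}$, so $\G C$ descends to a bijection on orbit sets. Composing with the bijection from the previous paragraph produces the claimed natural bijection of $\Pi_0^{-1}(\Psi)$ with the orbits of $\hat{\star}$.

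Because all substantive work has been done upstream, there is no genuine obstacle at this stage. The only point requiring mild attention is naturality of the composite bijection (independence of the intermediate choice of $\mathfrak{b}$ and of the splicing parameter $\epsilon$), but this is already built into the statements of Propositions~\ref{prp:orbitsarefibers}, \ref{prp:bijectionfiberwithpoint}, and \ref{prp:algebraicaction}, so it suffices to record the composition.
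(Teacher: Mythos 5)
Your proposal is correct and follows exactly the paper's own route: the theorem is stated as the summary of the preceding discussion, namely the surjectivity of $\Pi_0$ (Proposition~\ref{prp:surjectivity}), the surjection $\i_*$ and its fibers (Propositions~\ref{prp:surjectivityontofiber} and~\ref{prp:orbitsarefibers}), and the equivariant bijection $\G C$ (Propositions~\ref{prp:bijectionfiberwithpoint} and~\ref{prp:algebraicaction}). Nothing is missing.
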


We now state a similar theorem when the domain of $f$ is $S^1$ instead of $I = [0,1]$. The proof is essentially the same.

\begin{thm} \label{thm:computationcomponentsS1}
Let $X$ be a compact manifold with boundary and $f : X \rightarrow S^1$ a smooth function that is submersive and such that its restriction to the boundary $f|_{\partial X} : \partial X \rightarrow S^1$  has isolated critical points with distinct critical values. Define the diagram $ZX$ as in \eqref{eq:zigzagX} with the additional identity map $X_0 = X_n$. Define the action $\hat{\star}$ as above for group elements satisfying $\t_0 = \t_n$. Then the statements in Theorem~\ref{thm:computationconnectedcomponents} hold.
\hfill$\square$
\end{thm}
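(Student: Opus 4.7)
The plan is to mirror the proof of Theorem~\ref{thm:computationconnectedcomponents} step by step, with the single modification that the interval $[0,1]$ is replaced by $S^1 = [s_0,s_n]/(s_0\sim s_n)$ and all constructions are made to respect this identification. First I would pick a basepoint $s_0 \in S^1$ that is a regular value of $f|_{\partial X}$, cut $S^1$ open at $s_0$, and choose interleaving regular values $s_0 < t_1 < s_1 < t_2 < \cdots < t_n < s_n$ with the identification $s_n \sim s_0$. Setting $X_i = f^{-1}(s_i)$, $X_i^{i+1} = f^{-1}([s_i,s_{i+1}])$, and $\beta_{i,i+1}$ exactly as in \S\ref{subsec:zigzagdiscretization}, the only change to the definitions is that the identity $X_0 = X_n$ is appended to the diagram $\widetilde{ZX}$. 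A continuous section of $f : X \to S^1$ is then precisely a continuous section $\delta : [s_0,s_n] \to X$ of the pulled-back tame function on $[s_0,s_n]$ satisfying the closing-up condition $\delta(s_0) = \delta(s_n)$.

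Surjectivity of $\Pi_0$ is proved exactly as in Proposition~\ref{prp:surjectivity}, with one extra care: given $\Psi \in \varprojlim \pi_0(ZX)$, the compatibility condition now includes $\Psi_{s_0} = \Psi_{s_n}$ in $\pi_0(X_0) = \pi_0(X_n)$, so when choosing representatives $\delta_i \in \Psi_{s_i}$ we may (and do) choose $\delta_0 = \delta_n$. Applying Lemma~\ref{lem:singlecriticalpointlift} over each $X_i^{i+1}$ and gluing yields a section $\delta : [s_0,s_n] \to X$ with $\delta(s_0) = \delta_0 = \delta_n = \delta(s_n)$, hence a genuine section of $f : X \to S^1$.

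To characterize the fibers of $\Pi_0$, fix a basepoint section $\mathfrak{b} \in \Gamma f$ with $\Pi_0(\mathfrak{b}) = \Psi$. Defining $\Gamma f(\mathfrak{b})$ as the sections agreeing with $\mathfrak{b}$ at every $s_i$ (noting that the condition at $s_0$ is the same as the condition at $s_n$), the product decomposition $\Gamma f(\mathfrak{b}) = \prod_{i=0}^{n-1} \Gamma f|_{X_i^{i+1}}(\mathfrak{b}|_{[s_i,s_{i+1}]})$ still holds because the closing-up condition is automatically enforced by $\mathfrak{b}(s_0) = \mathfrak{b}(s_n)$. Proposition~\ref{prp:surjectivityontofiber} transfers verbatim: given $\delta$ with $\Pi_0([\delta]) = \Psi$, pick paths $\gamma_i : [0,1] \to \Psi_{s_i}$ from $\mathfrak{b}(s_i)$ to $\delta(s_i)$ for $i = 0,\ldots,n$, with the added requirement $\gamma_0 = \gamma_n$ (possible since $\Psi_{s_0} = \Psi_{s_n}$ and both paths live in the same component of $X_0$), and splice as before. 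Bijectivity of the product of section collapse maps $\Gamma C$ and the computation of the action $\hat{\star}$ (Propositions~\ref{prp:bijectionfiberwithpoint} and \ref{prp:algebraicaction}) carry over without modification, since they are local to each cobordism $X_i^{i+1}$.

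The main obstacle, and essentially the only substantive change, is the bookkeeping at the identified basepoint $s_0 = s_n$. In the $[0,1]$-story, the splicings at $s_0$ and $s_n$ were independent boundary splicings, governed by a pair $(\tau_0,\tau_n) \in \pi_1(X_0,\mathfrak{b}(s_0)) \times \pi_1(X_n,\mathfrak{b}(s_n))$. In the $S^1$-story, what was a left boundary splicing at $s_0$ and a right boundary splicing at $s_n$ becomes a single interior splicing at $s_0 = s_n$ in the closed-up section; this splicing is parametrized by a single loop, which forces the restriction $\tau_0 = \tau_n$ in the acting group. With this constraint imposed on $\hat{\star}$, Proposition~\ref{prp:orbitsarefibers} and the final identification of $\Pi_0^{-1}(\Psi)$ with orbits go through exactly as before, completing the proof.
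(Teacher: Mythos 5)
Your proposal is correct and matches the paper's approach, which the paper itself leaves implicit (the paper dispatches this theorem with the remark "The proof is essentially the same" before the statement). You have correctly identified the one substantive point: after cutting $S^1$ open at $s_0=s_n$, the surjectivity and fiber-characterization arguments transfer formally once one notes that a fiberwise homotopy over $S^1$ yields a single loop $h(-,s_0)=h(-,s_n)$ at the identified point, and that the left/right boundary splicings at $s_0$ and $s_n$ in the cut-open picture combine into an interior splicing over $S^1$ precisely when $\tau_0=\tau_n$, giving the advertised constraint on the acting group.
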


\section{Application: Evasion paths in mobile sensor networks} \label{sec:theevasionpathproblem}

Given a collection of continuous sensors $\cS = \{ \g : [0,1] \rightarrow \cD \}$ moving in a bounded domain $\cD \subset \mathbb{R}^d$, an {\bf evasion path} is a continuous intruder $\delta : [0,1] \rightarrow \cD$ that avoids detection by the sensors for the whole time interval $I = [0,1]$. Suppose each sensor $\g$ observes a ball of fixed radius in $\mathbb{R}^d$ centered at $\g(t)$ at every $t \in I$, and let $C_t$ be the time-varying union of these sensor balls. Then the intruder $\delta$ avoids detection if $\delta(t)$ is not in $C_t$ for all $t \in I$.

The {\bf sensor ball evasion path problem} (see \textsection\ref{subsec:priorwork} for details) asks for a criterion that determines whether or not an evasion path exists and that is based on the least amount of sensed information possible; for example, a common assumption is that the sensors only detect other nearby sensors and intruders; in particular, they do not know their coordinates in $\mathbb{R}^d$. We ask to understand the homotopy type of the space $\cE$ of evasion paths. The existence problem simply asks if $\cE$ is empty or not.

We consider an idealized version of the evasion path problem, called the {\bf smoothed evasion path problem}, where we smooth the time-varying covered region $C_t$ into a smooth cobordism of manifolds with boundary; see \textsection\ref{subsec:idealized} for the precise description. In our main result (Theorem~\ref{thm:main}), we establish a necessary and sufficient condition for existence of an evasion path based on the time-varying $d-1$ homology of the covered region $C_t$ and the time-varying cup-product on cohomology $H^0$ of its boundary, and moreover we provide a lower bound on the number of connected components $\pi_0(\cE)$ of the space of evasion paths $\cE$. In the preliminary Corollary~\ref{cor:discretization}, we provide a full computation of $\pi_0(\cE)$ in terms of the time-varying $\pi_0$ and $\pi_1$ of the uncovered region $X_t = \cD \setminus C_t$, following from Theorem~\ref{thm:computationconnectedcomponents}. In the case that $C_t$ is connected for all $t \in I$, Theorem~\ref{thm:main} has the simpler form Corollary~\ref{cor:connected} that requires only the cup product on $H^0$ of the boundary of $C_t$.

Our results are the first to compute more about the space $\cE$ than whether or not it is empty.

We recall the sensor ball evasion path problem in detail and describe prior work on evasion problems in \textsection\ref{subsec:priorwork}. Then we introduce the smoothed evasion problem in \textsection\ref{subsec:idealized}, and explain our results in \textsection\ref{sec:results}.

\subsection{Prior work} \label{subsec:priorwork}
We describe the sensor ball evasion path problem in more detail. Then we explain prior results on this problem and a general evasion problem studied in \cite{MR3763757}.

Fix some sensing radius $r > 0$ and say that each sensor $\g \in \cS$ can detect objects within the closed ball
$$B_{\g(t)} = \{ x \in \mathbb{R}^d \,\, | \,\, |\g(t) - x| \leq r \}$$
at all times $t \in I = [0,1]$. The time-$t$ covered region
\begin{equation} \label{eq:sensorballunion}
C_t = \bigcup_{\g \in \cS} B_{\g(t)} \subset \mathbb{R}^d
\end{equation}
is the union of the sensor balls, and it is homotopy equivalent to the \v{C}ech complex of the covering of $C_t$ by the sensor balls. The time-$t$ uncovered region is the complement
$$X_t = \cD \setminus C_t,$$
where $\cD \subset \mathbb{R}^d$ is a bounded domain that is homeomorphic to a ball.

Assume that the boundary $\partial \cD$ is covered by a collection of immobile fence sensors $F\cS \subset \cS$, meaning $\g(t) \in \mathbb{R}^d$ is constant for $\g \in F\cS$, and that the union of balls $B_{\g(t)}$ for $\g \in F\cS$ covers the boundary $\partial \cD$ and is homotopy equivalent to $\partial \cD$. In particular, this ensures that an intruder $\delta$ can never escape from the domain $\cD$.

We define the covered region
\begin{equation} \label{eq:coveredregion}
C := \bigcup_{t \in I} C_t \times \{t\} \subset \mathbb{R}^d \times I
\end{equation}
and the uncovered region
$$X := (\cD \times I) \setminus C = \bigcup_{t \in I} X_t \times \{t\}.$$
Then an evasion path is equivalent to a continuous section $\delta : I \rightarrow X$ of the projection $\rho_{X} : X \rightarrow I$, i.e.,\ $\rho_{X} \circ \delta = id_I$.

The sensor ball evasion path problem asks for a criterion for existence of an evasion path.

This problem was first stated and studied in \cite{deSilvaGhristEvasionsFence} by de Silva and Ghrist in dimension $d = 2$. Provided that the time-varying \v{C}ech complex changes only at finitely many times, the authors establish a necessary condition for existence of an evasion path which states that the connecting homomorphism on a relative homology group with respect to the fence of the covered region must vanish.

In \cite{EvasionAdamsCarlsson}, Adams and Carlsson consider the same evasion problem in arbitrary dimension $d \geq 2$, and they establish a necessary condition for the existence of an evasion path based on zigzag persistent homology \cite{MR2657946} of the time-varying \v{C}ech complex. They also explain how this result is equivalent to a generalization of de Silva and Ghrist's result to the general case $d \geq 2$.

In both de Silva-Ghrist and Adams-Carlsson, the necessary condition for existence of an evasion path is determined and easily computable from the overlap information of sensor balls. Essentially, sensors only need to know which other sensors are nearby; they don't need to know their coordinates in $\mathbb{R}^d$.

Adams-Carlsson also study smarter sensors in the plane $\mathbb{R}^2$ that know local distances to nearby sensors and the natural counterclockwise ordering on nearby sensors. For these smarter sensors in $\mathbb{R}^2$, they derive a necessary and sufficient condition for existence of an evasion path as well as an algorithm to compute it, under the assumption that the covered region $C_t$ is connected for all $t \in I$. They show that this connectedness assumption is necessary for their methods.

In \cite{MR3763757}, Ghrist and Krishnan define positive homology and cohomology for directed spaces over $\mathbb{R}^q$, and they compute it using sheaf-theoretic techniques. They consider a general type of evasion problem where the covered region is a pro-object in a category of smooth compact cobordisms. They derive a criterion on positive cohomology that is necessary and sufficient for existence of an evasion path, under the assumption that $C_t$ is connected.

\subsection{The smoothed evasion path problem} \label{subsec:idealized}

In this section we describe the precise version of the evasion path problem considered in this paper and indicate how our setting arises as a limiting case of the sensor ball evasion path problem as the number and density of the sensors becomes large.

Fix a dimension\footnote{We assume $d \geq 2$ because the $d = 0,1$ cases are trivial under the conditions of our Theorem~\ref{thm:main} and would require slightly modified arguments to state as part of the theorem. See Remark~\ref{rmk:mainlowdims} for the $d = 0,1$ cases.} $d \geq 2$ and let $\cD \subset \mathbb{R}^d$ be a smoothly embedded closed $d$-dimensional ball. The {\bf covered region} $C \subset \cD \times I$, where $I = [0,1]$, is any subset with the following properties:

\begin{itemize}
\item The time $0$ and $1$ covered regions, given by
$$C_i = C \cap (\mathbb{R}^d \times \{i\})$$
for $i = 0,1$, are smooth compact codimension-$0$ submanifolds $C_i \subset \mathbb{R}^d$ with smooth boundary $\partial C_i$.

\item The full covered region $C \subset \mathbb{R}^d \times I$ is a smooth embedded compact cobordism (Definition~\ref{dfn:cobordism})
between $C_0$ and $C_1.$ In particular, $C$ has boundary
$$\partial C = C_0 \cup \partial \cup C_1$$
where $\partial$ is a $d$-dimensional manifold with boundary $\partial C_0 \sqcup \partial C_1$ that intersects $C_i$ along its boundary
$$\partial \cap C_i = \partial C_i$$
for $i = 0,1$.

\item The boundary $\partial$ contains $\partial \cD \times I$ as a connected component.

\item The critical points of the projection $\rho_{\partial} : \partial \rightarrow I$ are isolated and have distinct critical values contained in $(0,1)$.
\end{itemize}

\begin{rmk}
Heuristically, when the covered region $C$ is given by the union of sensor balls over all times $t \in I$ as in \eqref{eq:sensorballunion} and \eqref{eq:coveredregion}, the smoothed evasion path problem arises from a small perturbation of $C$ inside $\mathbb{R}^d \times I$ that smooths out the time-varying union of sensor balls into a manifold. The projection $C \rightarrow I$ does not have any critical points since $C$ is codimension-$0$ in $\mathbb{R}^d \times I$, and generically the projection on the boundary $\rho_{\partial} : \partial \rightarrow I$ has isolated critical points with distinct critical values.

For generic sensor paths in $\mathbb{R}^d$, the boundaries of the sensors balls will intersect transversely at all but finitely many times $t \in I$. We call the transverse times the regular times, and the non-transverse times are the critical times. Heuristically, these times correspond to the regular values and the critical values, respectfully, of the function $\rho_{\partial}$ in the smoothed evasion path problem.
\end{rmk}

We define the uncovered region $X$ to be the closure of the complement $(\cD \times I) \setminus C$. More precisely, we define the {\bf essential boundary}
$$B := \partial \setminus (\partial \cD \times I),$$
and the {\bf uncovered region}
$$X :=  ((\cD \times I) \setminus C) \cup B.$$
Then $X$ is a compact cobordism (Definition~\ref{dfn:cobordism}) between the time $0$ and time $1$ uncovered regions $X_i = X \cap (\mathbb{R}^d \times \{i\})$ for $i = 0,1,$ with boundary
$$\partial X = X_0 \cup B \cup X_1.$$
Note that $B$ is the intersection
$$B = X \cap C.$$

Consider the projection $\rho : \mathbb{R}^d \times I \rightarrow I$ and the {\bf space of evasion paths}
\begin{equation} \label{eq:spaceofevasionpaths}
\cE := \{ \delta : I \rightarrow X \,\, | \,\, \rho \circ \delta = id_I \text{ and } \delta \text{ is continuous}\},
\end{equation}
i.e.,\ the space of continuous sections of the projection restricted to the uncovered region
$$\rho_{X} : X \rightarrow I.$$
Here, $\cE$ is given the compact-open topology, or equivalently the topology of the supremum norm for continuous maps $I \rightarrow \mathbb{R}^d$ with respect to the standard norm on $\mathbb{R}^d$.

Our main result (Theorem~\ref{thm:main}) establishes a necessary and sufficient condition for existence of an evasion path, and moreover provides a lower bound on the number of connected components $\pi_0(\cE)$. The required information consists of a parameterized homology of the covered region $C$, a parameterized cup-product on the essential boundary $B$, and an Alexander duality isomorphism. See also Corollary~\ref{cor:connected} for the simpler case when $C_t$ is connected for all $t$.

We now briefly discuss how one might approximate the information required in Theorem~\ref{thm:main} to compute existence of an evasion path in the sensor ball evasion path problem where the covered region $C$ is a time-varying union of sensors balls as in \eqref{eq:sensorballunion} and \eqref{eq:coveredregion}.

To compute the time-varying homology of $C$ it suffices to have the information of the time-varying \v{C}ech complex, or in other words the information of sensor ball overlaps; see \cite{EvasionAdamsCarlsson}.

Heuristically, one can use the Niyogi-Smale-Weinberger Theorem \cite{MR2383768} to compute the time-varying cup-product on cohomology $H^0$ of the essential boundary $B$ under the following additional assumptions:
\begin{itemize}
\item {\bf Directional sensing}: The sensors detect the direction in $\mathbb{R}^d$ of nearby sensors, as well as the direction of the wall $\partial \cD$ if it is nearby.
\item {\bf Dense coverage}: The sensors are $\epsilon$-densely distributed throughout $C_t$ at all regular values $t$, in the sense of \cite{MR2383768}.
\end{itemize}
Indeed, with these assumptions, we can label a sensor $\g \in \cS$ at time $t \in I$ as an {\bf essential boundary sensor} if there is a codimension-$1$ hypersurface in $\mathbb{R}^d$ containing $\g(t)$ such that all sensors in a small neighborhood of $\g(t)$ lie on the same side of the hypersurface, and such that the other side of the hypersurface is not the wall $\partial \cD$. First of all, sensors can detect this information due to the directional sensing hypothesis. Second, due to the dense coverage hypothesis, such a hypersurface exists if and only if the sensor is close to the essential boundary $B$; indeed, sensors in the interior of $C$ see other sensors in all directions, whereas those near $B$ see other sensors towards the interior of $C$ and they don't see any sensors on the other side of $B$. This same effect occurs near $\partial \cD$, yet the sensor is aware that it is an effect of the wall. The Niyogi-Smale-Weinberger Theorem \cite{MR2383768} then asserts that the \v{C}ech complex of the boundary sensors computes cohomology of $B_t = B \cap (\mathbb{R}^d \times \{t\})$ at regular times $t$.

\subsection{Connected components of the space of evasion paths} \label{sec:results}
In this section we prove the results, Theorem~\ref{thm:main}, Corollary~\ref{cor:discretization}, and Corollary~\ref{cor:connected}, about the smoothed evasion path problem introduced in \textsection\ref{subsec:idealized}.

Denote the critical values of the projection $\rho_B : B \rightarrow I$ by
$$t_1 < t_2 < \cdots < t_n$$
and choose interleaving values
$$0 = s_0 < t_1 < s_1 < t_2 < \cdots < s_{n-1} < t_n < s_n =  1.$$
We recall the zigzag discretization procedure described in \textsection \ref{subsec:zigzagdiscretization}, which we apply here to various subspaces $M \subset \mathbb{R}^d \times I.$
Consider the projection
$$\rho : \mathbb{R}^d \times I \rightarrow I$$
and its restriction
$$\rho_M := \rho|_{M} : M \rightarrow I,$$
from which we obtain subspaces of $M$ given by the preimages
\begin{align*}
M_i &:= \rho_{M}^{-1}(s_i), \text{ for } 0 \leq i \leq n,\\
M_i^{i+1} &:= \rho_{M}^{-1}([s_i,s_{i+1}]), \text{ for } 0 \leq i \leq n-1.
\end{align*}
These fit together into a zigzag diagram of topological spaces
\begin{equation*}
\widetilde{ZM} := \bigg ( M_0 \hookrightarrow M_0^1 \hookleftarrow M_1 \hookrightarrow M_1^2 \hookleftarrow \cdots \hookrightarrow M_{n-1}^n \hookleftarrow M_n \bigg )
\end{equation*}
where all maps are the natural inclusions.

Consider now the zigzag diagram $\widetilde{ZX}$ in the case that $M = X$ is the uncovered region.  Recall the abbreviated zigzag diagram $$ZX := (X_0 \leftrightarrow X_1 \leftrightarrow \cdots \leftrightarrow X_n)$$
defined in \eqref{eq:zigzagX}, which is obtained roughly by inverting homotopy equivalences. Note that
$$\varprojlim \pi_0(\widetilde{ZX}) = \varprojlim \pi_0(ZX).$$
Theorem~\ref{thm:computationconnectedcomponents} applied to the projection $\rho_X : X \rightarrow I$ provides the following.

\begin{cor} \label{cor:discretization}
There is a surjection
$$\Pi_0 : \pi_0(\cE) \rightarrow \varprojlim \pi_0(ZX) = \varprojlim \pi_0(\widetilde{ZX}).$$
In the notation of \textsection \ref{sec:sectionsoftamefunction}, the fibers of $\Pi_0$ are characterized as follows. Let $\Psi \in \varprojlim \pi_0(ZX)$ and $\mathfrak{b} \in \cE$ such that $\Pi_0(\mathfrak{b}) = \Psi$. Then the fiber $\Pi_0^{-1}(\Psi)$ is naturally in bijection with the orbits of the action $\hat{\star}$ of the group $\prod_{i=0}^n \pi_1(X_i,\mathfrak{b}(s_i))$ on the set $\prod_{i=0}^{n-1} \pi_1(X_{\partial^+(i)}, \mathfrak{b}(s_{\partial^+(i)}))$.
\end{cor}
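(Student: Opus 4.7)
The plan is to observe that Corollary~\ref{cor:discretization} is essentially a direct corollary of Theorem~\ref{thm:computationconnectedcomponents} once one checks that the projection $\rho_X : X \rightarrow I$ fits the abstract framework of a tame function on a compact cobordism of manifolds with boundary. So the proof will consist of a verification step followed by a citation of the theorem, with a small remark about the equality of the two inverse limits.

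First, I would confirm that $X$, as defined in \textsection\ref{subsec:idealized}, is a compact cobordism of manifolds with boundary between $X_0$ and $X_1$ in the sense of Definition~\ref{dfn:cobordism}. Compactness is inherited from $\cD \times I$. The boundary decomposes as $\partial X = X_0 \cup B \cup X_1$ with $X_0 \cap X_1 = \emptyset$; the level sets $X_i$ are $d$-dimensional manifolds with boundary $\partial X_i = X_i \cap B$, and the essential boundary $B$ is a cobordism between the closed manifolds $\partial X_0$ and $\partial X_1$. All of this is built into the hypotheses placed on the covered region $C$ in \textsection\ref{subsec:idealized}.

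Next, I would verify that $\rho_X$ is a tame function (Definition~\ref{dfn:tamefunction}). Submersivity is automatic because $X$ is codimension-$0$ in $\mathbb{R}^d \times I$, so projection onto the last factor is a submersion everywhere. The endpoint identifications $\rho_X^{-1}(0) = X_0$ and $\rho_X^{-1}(1) = X_1$ are by definition. Finally, the hypothesis on $\rho_\partial : \partial \rightarrow I$ gives isolated critical points with distinct critical values in $(0,1)$; since $\partial \cD \times I$ is a cylindrical component of $\partial$ on which $\rho$ restricts to a trivial submersion, the critical points of $\rho_\partial$ all lie in $B$, and hence are precisely the critical points of $\rho_X|_B = \rho_B$.

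With the hypotheses in place, Theorem~\ref{thm:computationconnectedcomponents} applied to $f = \rho_X$ yields both the surjection $\Pi_0 : \pi_0(\cE) \rightarrow \varprojlim \pi_0(ZX)$ and the bijective identification of each fiber $\Pi_0^{-1}(\Psi)$ with the orbits of the $\hat{\star}$-action. To finish, I would note the equality $\varprojlim \pi_0(\widetilde{ZX}) = \varprojlim \pi_0(ZX)$: the inclusions $X_{\partial^+(i)} \hookrightarrow X_i^{i+1}$ are homotopy equivalences by Lemma~\ref{lem:defretracttoregvalues}, so on $\pi_0$ they are bijections, and replacing the zigzag $\widetilde{ZX}$ by $ZX$ amounts to composing the other inclusion with the inverse bijection at each vertex; this does not change the inverse limit. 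There is no genuine obstacle here -- the work is really in setting up \textsection\ref{subsec:idealized} so that $\rho_X$ is a tame function, which has already been done. The only place one might spend a sentence is in explaining why no critical points of $\rho_\partial$ can sit on the cylindrical wall $\partial \cD \times I$, but this is immediate.
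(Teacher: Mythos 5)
Your proposal is correct and follows essentially the same route as the paper: verify that $\rho_X$ is a tame function on the compact cobordism $X$ (submersivity from the codimension-$0$ embedding, boundary critical points inherited from the hypothesis on $\rho_\partial$) and then invoke Theorem~\ref{thm:computationconnectedcomponents}. The extra remarks about the cylindrical wall component and the equality of the two inverse limits are harmless elaborations of points the paper treats implicitly.
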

\begin{proof}
The space $\cE$, defined in \eqref{eq:spaceofevasionpaths}, is the space of continuous sections of the projection $\rho_{X} : X \rightarrow I$. Since $X \subset \mathbb{R}^d \times I$ is codimension-$0$, the projection $\rho_{X}$ is submersive. Since the projection on the boundary $\rho_{\partial} : \partial \rightarrow I$ has isolated critical points with distinct critical values in $(0,1)$ by hypothesis, the same is true for its restriction to the essential boundary $B \subset \partial$, and hence $\rho_{X}$ is a tame function (Definition~\ref{dfn:tamefunction}). Hence the result follows from Theorem~\ref{thm:computationconnectedcomponents}.
\end{proof}

\begin{rmk} \label{rmk:applyresults}
Corollary~\ref{cor:discretization}, applied to the examples presented in Figure~\ref{fig:examples} produces the same full computation of $\pi_0(\cE)$ as Theorem~\ref{thm:pinbaby}, namely exactly that described in Remark~\ref{rmk:applytheoremtoexamples}.

Our main result, Theorem~\ref{thm:main}, provides only lower bounds on the cardinality of $\pi_0(\cE)$ and determines if it is empty or not, however it needs as input only (co)homological information about the covered region $C$ and the essential boundary $B$. This is important for applications in which the topology of $C$ and the boundary can be detected from the sensor information but the topology of $X$ may be harder to determine since this is defined as the region where there are no sensors. To improve this to a full computation of $\pi_0(\cE)$, one would need to compute the fiberwise fundamental groups and maps between them used in Corollary~\ref{cor:discretization} from homological information. This can be addressed by a fiberwise version of the unstable Adams spectral sequence of Bousfield-Kan, currently under development by Wyatt Mackey.
\end{rmk}

The goal now is to compute $\pi_0(\widetilde{ZX})$ in terms of homological information about the covered region $C$ and the boundary $B$. In principle, we have access to this homological information in applications with moving sensors (see the end of \textsection \ref{subsec:idealized}).

Let $k$ be a field. In view of Proposition~\ref{prp:dualizeH0algebra} below, to compute $\pi_0(\widetilde{ZX})$ it suffices to compute the cup product structure on $H^0(\widetilde{ZX};k)$.

Let $Hom_{k-algebra}(-,k)$ be the functor that takes a $k$-algebra $R$ to the set of $k$-algebra homomorphisms $R \rightarrow k$. Then applying $Hom_{k-algebra}(-,k)$ to the diagram $H^0(\widetilde{ZX};k)$ yields a zigzag diagram of sets.

\begin{prp} \label{prp:dualizeH0algebra}
There is an isomorphism of zigzag diagrams of sets
$$\pi_0(\widetilde{ZX}) \cong Hom_{k-algebra}(H^0(\widetilde{ZX};k),k),$$
or in other words a commutative diagram 
\[
  \begin{tikzcd}
 \pi_0({X}_0) \arrow{r} \arrow{d}   &  \pi_0({X}_0^1)   \arrow{d}   & \arrow{l} \cdots \arrow{r} &\pi({X}_{n-1}^n)  \arrow{d} & \arrow{l} \pi_0({X}_n) \arrow{d} \\
H^0({X}_0;k)^{\vee}  \arrow{r} & H^0({X}_0^1;k)^{\vee}  & \arrow{l} \cdots \arrow{r} &H^0({X}_{n-1}^n;k)^{\vee}  & \arrow{l} H^0({X}_n;k)^{\vee}
\end{tikzcd}
\]
with all vertical arrows bijections and where $H^0(-,k)^{\vee} := Hom_{k-algebra}(H^0(-;k),k)$ is the set of $k$-algebra morphisms $H^0(-;k) \rightarrow k$ with respect to the cup product on $H^0$.
\end{prp}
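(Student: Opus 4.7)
The plan is to reduce the statement to a pointwise claim: for every topological space $Y$ appearing in $\widetilde{ZX}$, construct a natural bijection
\[
\alpha_Y : \pi_0(Y) \xrightarrow{\cong} \mathrm{Hom}_{k\text{-alg}}(H^0(Y;k), k),
\]
and then invoke naturality to obtain the claimed commutative diagram of bijections. The key observation that makes this elementary is that every space appearing in $\widetilde{ZX}$, i.e.\ each $X_i$ and each $X_i^{i+1}$, is a compact manifold (with boundary and corners), so $\pi_0(Y)$ is finite throughout. This finiteness is what lets us pass to a clean algebraic description below.

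For the pointwise claim, I would identify $H^0(Y;k)$ (locally constant $k$-valued functions with pointwise algebra structure) with the finite product $\prod_{c \in \pi_0(Y)} k$ via the basis of characteristic functions $e_c$ of the connected components: each $c$ is clopen, so $e_c$ is locally constant, and the $\{e_c\}$ form a complete system of orthogonal idempotents, $e_c e_{c'} = \delta_{c,c'} e_c$ with $\sum_c e_c = 1$. Now for any $k$-algebra map $\phi$ from this product to the field $k$, each $\phi(e_c)$ is an idempotent in $k$ hence lies in $\{0,1\}$; the relations $e_c e_{c'} = 0$ for $c \ne c'$ force pairwise annihilation, and $\phi(1)=1=\sum_c \phi(e_c)$ then forces exactly one $\phi(e_c)$ to equal $1$. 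Consequently $\phi$ is a unique coordinate projection, and the map $\alpha_Y$ which sends $c \in \pi_0(Y)$ to the ``evaluate a locally constant function at any chosen point of $c$'' functional is a bijection.

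For naturality, any continuous map $g : Y \to Y'$ yields a $k$-algebra map $g^* : H^0(Y';k) \to H^0(Y;k)$ by precomposition, and $\mathrm{Hom}_{k\text{-alg}}(-,k)$ turns this into $(g^*)^\vee : \mathrm{Hom}_{k\text{-alg}}(H^0(Y;k),k) \to \mathrm{Hom}_{k\text{-alg}}(H^0(Y';k),k)$ by postcomposition with $g^*$. Chasing definitions, for $c \in \pi_0(Y)$ and $y \in c$, the composite $\alpha_Y(c) \circ g^*$ evaluates a function on $Y'$ at $g(y)$, which lies in the component $g_*(c)$; so $(g^*)^\vee(\alpha_Y(c)) = \alpha_{Y'}(g_*(c))$. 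Applying this to each inclusion in $\widetilde{ZX}$ gives the commutativity of every square, finishing the proof. There is no genuine obstacle; this is a special case of the standard duality between a finite Boolean decomposition and its primitive idempotents, and the only subtlety is to record that finiteness of $\pi_0(Y)$ (hence the identity $\sum_c e_c = 1$) is available because each space in $\widetilde{ZX}$ is compact.
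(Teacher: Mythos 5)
Your proof is correct and takes essentially the same route as the paper: an objectwise natural evaluation bijection $\pi_0(Y) \to \mathrm{Hom}_{k\text{-alg}}(H^0(Y;k),k)$, with naturality then yielding the commuting squares of the zigzag. You are in fact slightly more careful than the paper, which asserts the bijection for an arbitrary topological space, whereas surjectivity-plus-injectivity of the evaluation map genuinely uses the finiteness of $\pi_0$ (available here by compactness) that you explicitly record via the orthogonal idempotent argument.
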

\begin{proof}
For any topological space $A$, there is a bijection of sets 
\begin{align*}
\alpha : \pi_0(A) &\rightarrow Hom_{k-algebra}(H^0(A;k),k)\\
[a] &\mapsto \bigg ((\varphi \in H^0(A;k)) \mapsto \varphi(a) \in k \bigg ).
\end{align*}
Moreover, $\alpha$ is a natural isomorphism from the functor $\pi_0(-)$ to the functor $Hom_{k-algebra}(H^0(-;k),k)$. Both of these are functors from the category of topological spaces to the category of sets. Hence $\alpha$ induces the claimed isomorphism of zigzag diagrams.
\end{proof}

It remains to compute $H^0(\widetilde{ZX};k)$. The inclusion map $\i_B : B \rightarrow X$ induces a map of zigzag diagrams of $k$-algebras $\i_B^* : H^0(\widetilde{ZX};k) \rightarrow H^0(\widetilde{ZB};k)$. In the simplest situation, when the time-$t$ covered region $C \cap (\mathbb{R}^d \times \{t\})$ is connected for all $t \in I$, the map $\i_B^*$ is an isomorphism by Proposition~\ref{prp:computationintermsofcoveredandboundary} below. The final result in this case is Corollary~\ref{cor:connected}.

In Theorem~\ref{thm:main} below, we explain how to compute $H^0(\widetilde{ZX};k)$ in the case that $C \cap (\mathbb{R}^d \times \{t\})$ is not necessarily connected. Let
$$B^c = (\cD \times I) \setminus B$$
be the complement of the essential boundary. Recall the notation for the regular level sets $B_i = B \cap (\mathbb{R}^d \times \{s_i\}), B^c_i = B^c \cap (\mathbb{R}^d \times \{s_i\})$, and $C_i = C \cap (\mathbb{R}^d \times \{s_i\})$ for $0 \leq i \leq n$. For each $0 \leq i \leq n$, there is a composite map
$$H_{d-1}(C_i;k) \xrightarrow{{\i_{C_i}}_*} H_{d-1}(B^c_i;k) \xrightarrow{\alpha_i} H^0(B_i;k)$$
where ${\i_{C_i}}_*$ is induced by the inclusion\footnote{For a manifold $C$ with boundary, the inclusion $\text{Interior}(C) \subset C$ is a homotopy equivalence.} $\i_{C_i} : \text{Interior}(C_i) \rightarrow B_i^c$ and $\alpha_i$ is Alexander duality (see Remark~\ref{rmk:Alexanderduality}).

See Example~\ref{ex:homologicaltheorem} below for the computation of $H^0(\widetilde{ZX};k)$ in example (a) from Figure~\ref{fig:examples}.

\begin{thm} \label{thm:main}
There is a surjection $\pi_0(\cE) \rightarrow \varprojlim Hom_{k-algebra}( H^0(\widetilde{ZX};k), k)$. In particular, an evasion path exists (i.e.\ $\cE$ is nonempty) if and only if $\varprojlim Hom_{k-algebra}( H^0(\widetilde{ZX};k), k )$ is nonempty, and the cardinality of $\pi_0(\cE)$ is bounded from below by the cardinality of the inverse limit.

Assume that the projection $C \rightarrow I$ does not have any local maxima or local minima except over $0,1 \in I$. Then the following information determines the zigzag diagram of $k$-algebras $H^0(\widetilde{ZX};k)$ up to isomorphism:
\begin{enumerate}
\item The zigzag diagram of $k$-algebras $H^0(\widetilde{ZB};k)$,
\item The images $im(\alpha_i \circ {\i_{C_i}}_*) \subset H^0(B_i;k)$ for $0 \leq i \leq n$,
\item The type (N or D) of every boundary critical point of $C \rightarrow I$.
\end{enumerate}
\end{thm}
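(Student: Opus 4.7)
The first assertion---the surjection onto $\varprojlim \Hom_{k\text{-alg}}(H^0(\widetilde{ZX};k),k)$ together with its consequences for emptiness and the cardinality lower bound---follows directly by composing the surjection $\pi_0(\cE) \twoheadrightarrow \varprojlim \pi_0(\widetilde{ZX})$ supplied by Corollary~\ref{cor:discretization} with the natural isomorphism of zigzag diagrams $\pi_0(\widetilde{ZX}) \cong \Hom_{k\text{-alg}}(H^0(\widetilde{ZX};k),k)$ provided by Proposition~\ref{prp:dualizeH0algebra}. Naturality of the latter with respect to zigzag morphisms makes the identification commute with $\varprojlim$.

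For the reconstruction of $H^0(\widetilde{ZX};k)$ from data (i)--(iii), the plan is three steps. In step A, condition (iii) determines $\partial^+$: the type of a critical point of $\rho_X|_B$ is opposite to the type of $\rho_C|_B$, because the outward normals of $X$ and $C$ at a point of the shared boundary $B$ point in opposite directions. Lemma~\ref{lem:defretracttoregvalues} applied to $\rho_X$ then yields deformation retractions $X_i^{i+1} \simeq X_{\partial^+(i)}$, collapsing the zigzag on $H^0$ to a diagram of fiber algebras $H^0(X_i;k)$ whose arrows go in the directions prescribed by $\partial^+$. In step B, for each $0 \le i \le n$, I would apply Mayer--Vietoris to the closed triad $(\cD; C_i, X_i)$ with intersection $B_i$. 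Since $\cD$ is contractible, the sequence
\begin{equation*}
0 \to k \to H^0(C_i;k) \oplus H^0(X_i;k) \to H^0(B_i;k) \to 0
\end{equation*}
is short exact, so the isomorphism class of the $k$-algebra $H^0(X_i;k)$ is pinned down by the integer $\pi_0(X_i) = \pi_0(B_i) - \pi_0(C_i) + 1$. The number $\pi_0(B_i)$ is visible in (i), and $\pi_0(C_i)$ is extracted from (ii): via Alexander duality the image $\mathrm{im}(\alpha_i \circ {\iota_{C_i}}_*) \subset H^0(B_i;k)$ encodes which components of $B_i$ co-bound a common covered region, so $\pi_0(C_i)$ equals the number of equivalence classes under the relation $\beta \sim \beta'$ iff $e_\beta - e_{\beta'}$ lies in this image.

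Step C handles the connecting maps and the cobordism-level algebras. I would apply the same Mayer--Vietoris analysis to the cobordism triad $(\cD \times [s_i,s_{i+1}];\; C_i^{i+1},\; X_i^{i+1})$ with intersection $B_i^{i+1}$, obtaining a short exact sequence whose term $H^0(B_i^{i+1};k)$ and maps to $H^0(B_i;k), H^0(B_{i+1};k)$ are directly supplied by (i). The no-local-max/min hypothesis on $\rho_C$ is essential here: it excludes cap and cup critical points, so that every critical value of $\rho_B|_B$ is a saddle of $\rho_C$ and Lemma~\ref{lem:defretracttoregvalues} applied to $\rho_C|_{C_i^{i+1}}$ deformation retracts $C_i^{i+1}$ onto $C_j$ for the unique $j \in \{i,i+1\}$ dictated by the type of $\rho_C$ (opposite to that of $\rho_X$, hence known from (iii)). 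This identifies the cobordism-level Alexander data with the fiber-level image of (ii) at the appropriate endpoint, so the cobordism short exact sequence determines $H^0(X_i^{i+1};k)$, and the inclusion-induced maps $H^0(X_i^{i+1};k) \rightrightarrows H^0(X_i;k), H^0(X_{i+1};k)$ complete the zigzag. The main obstacle is the coherence of this reconstruction: one must check that the Mayer--Vietoris identifications at the fiber level in step B intertwine correctly with the cobordism-level identifications of step C, so that all pieces assemble into a zigzag diagram of $k$-algebras isomorphic to $H^0(\widetilde{ZX};k)$ independent of auxiliary choices (of pseudogradients, collars, etc.). This is a naturality check, but a delicate one because Alexander duality enters nontrivially at every regular level, and the no-local-max/min assumption is precisely what guarantees that the fiber-level Alexander data propagates consistently through every cobordism.
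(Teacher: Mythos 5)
The first paragraph of the theorem is handled exactly as in the paper (compose Corollary~\ref{cor:discretization} with Proposition~\ref{prp:dualizeH0algebra}), and your Step A observation that (iii) determines $\partial^+$ and hence which of the two maps out of $H^0(X_i^{i+1};k)$ is an isomorphism is also correct. The gap is in Steps B and C: counting connected components via the Mayer--Vietoris sequences determines each $H^0(X_i;k)$ and $H^0(X_i^{i+1};k)$ only as an \emph{abstract} $k$-algebra (namely $k^{m}$ for a computable $m$), whereas the theorem asserts that the whole \emph{zigzag diagram} is determined, i.e.\ the restriction maps $H^0(X_i^{i+1};k)\to H^0(X_i;k)$ and $H^0(X_i^{i+1};k)\to H^0(X_{i+1};k)$ must be recovered from the data. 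You defer exactly this to a ``coherence/naturality check,'' but that check is the entire content of the second half of the theorem and your proposal supplies no mechanism for it. The paper's mechanism is to prove (Proposition~\ref{prp:computationintermsofcoveredandboundary}, again via Mayer--Vietoris and contractibility of $\cD$) that $\i_B^*:H^0(\widetilde{ZX};k)\to H^0(\widetilde{ZB};k)$ is \emph{injective}, so that the unknown zigzag is literally the subdiagram $\mathrm{im}(\i_B^*)$ of the known diagram (i), with its maps inherited from (i); the problem then reduces to locating the subspaces $\mathrm{im}(\i_{B_i}^*)$ (which equal $im(\alpha_i\circ{\i_{C_i}}_*)$, i.e.\ data (ii), by the Alexander duality square) and $\mathrm{im}(\i_{B_i^{i+1}}^*)$ (which equals $\varphi^{-1}(\mathrm{im}(\i_{B_{\partial^+(i)}}^*))$ where $\varphi:H^0(B_i^{i+1};k)\to H^0(B_{\partial^+(i)};k)$ is a map already present in (i)). None of this appears in your argument.

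Relatedly, you misplace the role of the no-local-min/max hypothesis. It is not needed for $C_i^{i+1}$ to deformation retract onto one of its ends --- Lemma~\ref{lem:defretracttoregvalues} applies to any cobordism containing a single boundary critical point, whatever its type, so that retraction exists even at a local min or max. The hypothesis is used to prove that $\varphi:H^0(B_i^{i+1};k)\to H^0(B_{\partial^+(i)};k)$ is \emph{injective}: a nonzero kernel forces a component of the cobordism $B_i^{i+1}$ whose entire boundary lies in the far end $B_{\partial^-(i)}$, which forces the unique critical point to be a local extremum of $C\to I$, contradicting the hypothesis. Without this injectivity the identity $\mathrm{im}(\i_{B_i^{i+1}}^*)=\varphi^{-1}(\mathrm{im}(\i_{B_{\partial^+(i)}}^*))$ fails to pin down the cobordism-level subalgebra, and the reconstruction of the zigzag collapses. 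To repair the proposal you would need to add the injectivity of $\i_B^*$ at both the fiber and cobordism levels, the injectivity of $\varphi$, and the resulting identification of $H^0(\widetilde{ZX};k)$ with the image subdiagram of $H^0(\widetilde{ZB};k)$; at that point the component counts of Steps B--C become unnecessary.
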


\begin{rmk}
The assumption and the given information $(i)-(iii)$ in Theorem~\ref{thm:main} are justified when interpreted in the context of a mobile sensor network.

Indeed, a local maximum of $C \rightarrow I$ can only occur if a sensor disappears. A local minimum can occur only if a new sensor is created. So, the assumption that $C \rightarrow I$ does not have local minima or maxima at intermediate times $t \in (0,1)$ means that the result applies to mobile sensor networks in which sensors do not get added or removed during the time interval of interest.

The assumed information in (i)-(iii) is all determined by the boundary $B$ and the covered region $C$. Hence, it is reasonable to assume that the sensors, which cover $C$ and have boundary $B$, can record this information; see the discussion of the Niyogi-Smale-Weinberger theorem at the end of \textsection\ref{subsec:idealized} for more detail. In particular, we do not require a priori information about the uncovered region $X$; indeed, the theorem computes the required cohomological information $H^0(\widetilde{ZX};k)$ from the given information in $(i)-(iii)$. 
\end{rmk}

\begin{proof}[Proof of Theorem~\ref{thm:main}]

The claimed surjection
$$\pi_0(\cE) \rightarrow \varprojlim Hom_{k-algebra}( H^0(\widetilde{ZX};k), k)$$
is the composition of the surjection $\Pi_0 : \pi_0(\cE) \rightarrow \varprojlim \pi_0(\widetilde{ZX})$ from Corollary~\ref{cor:discretization} and the isomorphism $\varprojlim \pi_0(\widetilde{ZX}) \cong \varprojlim Hom_{k-algebra}( H^0(\widetilde{ZX};k), k)$ from Proposition~\ref{prp:dualizeH0algebra}. It remains to compute $H^0(\widetilde{ZX};k)$ from (i)-(iii) under the assumption that $C \rightarrow I$ does not have any local maxima or minima at intermediate times $t \in (0,1)$.

The inclusion
$$\i_B : B \rightarrow X$$
induces a map of zigzag diagrams of $k$-algebras
$$\i_B^* : H^0(\widetilde{ZX};k) \rightarrow H^0(\widetilde{ZB};k).$$

\begin{prp} \label{prp:computationintermsofcoveredandboundary}
The map $\i_B^*$ is injective. If the time-$t$ covered region $C \cap (\mathbb{R}^d \times \{t\})$ is connected for all $t \in I$, then $\i_B^*$ is an isomorphism.
\end{prp}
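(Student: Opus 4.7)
The plan is to apply the Mayer--Vietoris sequence in cohomology to the decompositions $\cD = X_i \cup C_i$ at each regular level $s_i$, and $\cD \times [s_i,s_{i+1}] = X_i^{i+1} \cup C_i^{i+1}$ at each cobordism piece, where the common intersection is the shared boundary $B_i$ or $B_i^{i+1}$, respectively, and the ambient space is contractible. Using a bicollar neighborhood of the smoothly embedded essential boundary $B$ in $\cD \times I$, I would thicken these closed covers to homotopy-equivalent open covers so that the standard Mayer--Vietoris sequence applies.

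Since the ambient space is contractible in each case, the reduced Mayer--Vietoris sequence collapses to isomorphisms
\begin{align*}
\tilde H^0(X_i;k) \oplus \tilde H^0(C_i;k) &\xrightarrow{\;\sim\;} \tilde H^0(B_i;k),\\
\tilde H^0(X_i^{i+1};k) \oplus \tilde H^0(C_i^{i+1};k) &\xrightarrow{\;\sim\;} \tilde H^0(B_i^{i+1};k),
\end{align*}
with arrow $(\alpha,\beta) \mapsto \iota_X^*\alpha - \iota_C^*\beta$. In particular, each inclusion-induced map $H^0(X_\bullet;k) \to H^0(B_\bullet;k)$ is injective on reduced cohomology, and hence on unreduced cohomology. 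Every such map is automatically a $k$-algebra homomorphism with respect to the cup product (which on $H^0$ is pointwise multiplication of locally constant functions), and the squares of the zigzag commute by functoriality of cohomology, so $\i_B^*$ is an injective map of zigzag diagrams of $k$-algebras.

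For the isomorphism statement, assume $C_t$ is connected for every $t \in I$. Then each level set $C_i$ is connected, so $\tilde H^0(C_i;k) = 0$. For each cobordism piece I would observe that $\rho|_{C_i^{i+1}} \colon C_i^{i+1} \to [s_i,s_{i+1}]$ is a tame function on the cobordism of manifolds with boundary $C_i^{i+1}$: it is submersive (since $C$ is codimension $0$), and its restriction to the essential boundary $B_i^{i+1} \cup (\partial\cD \times [s_i,s_{i+1}])$ has a single critical point, contributed by $B_i^{i+1}$, since the fence part contributes none. Lemma~\ref{lem:defretracttoregvalues} then produces a deformation retraction of $C_i^{i+1}$ onto either $C_i$ or $C_{i+1}$, both of which are connected, so $\tilde H^0(C_i^{i+1};k) = 0$. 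Inserting these vanishings into the Mayer--Vietoris isomorphisms above promotes each injection on reduced cohomology to an isomorphism, proving the claimed isomorphism of zigzag diagrams of $k$-algebras.

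The main technical point requiring care is the application of Mayer--Vietoris to closed covers with shared boundary $B$; this is handled by the existence of a bicollar of the smooth codimension-one submanifold $B$ in the ambient smooth manifold $\cD \times I$. Everything else is routine functoriality of cohomology and the observation that the cup product on $H^0$ is simply pointwise multiplication.
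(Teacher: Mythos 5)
Your proposal is correct and follows essentially the same route as the paper: Mayer--Vietoris for the decompositions $\cD = X_i \cup C_i$ and $\cD\times[s_i,s_{i+1}] = X_i^{i+1}\cup C_i^{i+1}$ with contractible ambient yields injectivity, and Lemma~\ref{lem:defretracttoregvalues} applied to the tame function $C_i^{i+1}\to[s_i,s_{i+1}]$ gives connectedness of the cobordism pieces when each $C_t$ is connected, so the reduced cohomology terms vanish and the injections become isomorphisms. The only difference is that you explicitly flag the closed-cover technicality (resolved by a bicollar of $B$), which the paper leaves implicit; this is a sound clarification, not a different argument.
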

\begin{proof}
The claim that $\i_B^*$ is injective means that the restriction maps
\begin{align*}
\i_i^* : H^0(X_i)& \rightarrow H^0(B_i),\\
{\i_i^{i+1}}^* : H^0(X_i^{i+1}) &\rightarrow H^0(B_i^{i+1})
\end{align*}
induced by the inclusions $B_i \subset X_i$ and $B_i^{i+1} \subset X_i^{i+1}$ are injective for all $i$.

We first consider the restriction maps $\i_i^*$. By definition, we have the covering $X_i \cup C_i = \cD$ and the intersection along their common manifold boundary is $X_i \cap C_i = B_i$. Hence there is a Mayer-Vietoris sequence
$$\tilde{H}^0(\cD) \rightarrow \tilde{H}^0(X_i) \oplus \tilde{H}^0(C_i) \rightarrow \tilde{H}^0(B_i) \rightarrow \tilde{H}^{1}(\cD).$$
Since $\cD$ is homeomorphic to a ball and hence contractible, it follows that the map
$\tilde{H}^0(X_i) \oplus \tilde{H}^0(C_i) \rightarrow \tilde{H}^0(B_i)$ is an isomorphism. Restricted to $\tilde{H}^0(X_i) \oplus \{0\}$, this is exactly the restriction map on reduced cohomology induced by $B_i \subset X_i$, and it is injective. Hence the induced map on unreduced cohomology, which is $\i_i^*$, is also injective, as claimed.

Under the additional hypothesis that the time-$t$ covered region $C \cap (\mathbb{R}^d \times \{t\})$ is connected for all $t \in I$, we have that $C_i = C \cap (\mathbb{R}^d \times \{s_i\})$ is connected and so $\tilde{H}^0(C_i) = 0$, in which case $\i_i^*$ is an isomorphism, as claimed.

We now return to the general case and analyze the restriction maps ${\i_i^{i+1}}^*
$. The argument that these are injective is similar. By definition, we have the covering $X_i^{i+1} \cup C_i^{i+1} = \cD \times [s_i,s_{i+1}]$ with intersection along the common boundary $X_i^{i+1} \cap C_i^{i+1} = B_i^{i+1}$. Hence there is a Mayer-Vietoris sequence
$$\tilde{H}^0(\cD \times [s_i,s_{i+1}]) \rightarrow \tilde{H}^0(X_i^{i+1}) \oplus \tilde{H}^0(C_i^{i+1}) \rightarrow \tilde{H}^0(B_i^{i+1}) \rightarrow \tilde{H}^{1}(\cD \times [s_i \times s_{i+1}]).$$
Hence the map $\tilde{H}^0(X_i^{i+1}) \oplus \tilde{H}^0(C_i^{i+1}) \rightarrow \tilde{H}^0(B_i^{i+1})$ is an isomorphism, which implies that the restriction map ${\i_i^{i+1}}^*
$ is injective, as claimed.

Under the additional hypothesis that the time-$t$ covered region $C \cap (\mathbb{R}^d \times \{t\})$ is connected for all $t \in I$, it follows from Lemma~\ref{lem:defretracttoregvalues} applied to the tame function $C_i^{i+1} \rightarrow I$ that the cobordism $C_i^{i+1}$ is homotopy equivalent to either $C_i$ or $C_{i+1}$, hence is connected, for all $i$. Thus $\tilde{H}^0(C_i^{i+1}) = 0$ and so ${\i_i^{i+1}}^*$ is an isomorphism, as claimed. This completes the proof of the proposition.
\end{proof}

Proposition~\ref{prp:computationintermsofcoveredandboundary} implies that $H^0(\widetilde{ZX};k)$ is isomorphic to the image $\text{im}(\i_B^*)$ of $\i_B^*$ a zigzag diagram of $k$-algebras, where the zigzag $k$-algebra structure on $\text{im}(\i_B^*)$ is induced by the cup-product on $H^0(\widetilde{ZB} ;k)$. Hence, it suffices to compute
$$\text{im}(\i_{B_i}^*) \subset H^0(B_i;k) \,\,\text{ and }\,\, \text{im}(\i_{B_i^{i+1}}^*) \subset H^0(B_i^{i+1};k)$$
as $k$-vector spaces for all $i$. Indeed, the $k$-algebra structures on these images are induced by the cup products on $H^0(B_i;k)$ and $H^0(B_i^{i+1};k)$, respectively, which are given by assumption $(i)$.

For each $i$, there is a commutative diagram of $k$-vector spaces
\begin{equation*} \label{eq:parameterizedalexanderevasion}
  \begin{tikzcd}
 H^0(X_i; k) \arrow{r}{\i_{B_i}^*}\arrow{d}{\cA_i}   &  H^0(B_i; k)\arrow{d}{\alpha_i^{-1}}    \\
H_{d-1}(C_i; k)  \arrow{r}{{\i_{C_i}}_*} & H_{d-1}(B_i^c; k),
\end{tikzcd}
\end{equation*}
where the top map is the restriction map on cohomology induced by the inclusion $\i_{B_i} : B_i \hookrightarrow X_i$, the bottom map is the pushforward on homology $H_{d-1}(-;k)$ induced by the inclusion $\i_{C_i} : Interior(C_i) \rightarrow B^c$, and the vertical maps $\cA_i$ and $\alpha_i$ are Alexander duality isomorphisms (see Remark~\ref{rmk:Alexanderduality}). This diagram commutes, and hence
$$im(\i^*_{B_i}) = im(\alpha_i \circ {\i_{C_i}}_*).$$
The right hand side is given by assumption $(ii)$.

\begin{rmk} \label{rmk:Alexanderduality}
In the Alexander duality isomorphisms $\cA_i$ and $\alpha_i$ we have unreduced cohomology $H^0$ instead of reduced cohomology $\tilde{H}^0$ because $X_i \cup C_i = \cD$ and $B_i \cup B_i^c = \cD$ are missing the connected subset $\mathbb{R}^d \setminus \cD$ of $\mathbb{R}^d$ and moreover $\mathbb{R}^d \setminus \cD$ is disjoint from both $X_i$ and $B_i$.
\end{rmk}

It remains to compute $im(\i^*_{B_i^{i+1}}).$ Consider the following commutative diagram, which is the relevant part of the map $H^0(\widetilde{ZX}) \rightarrow H^0(\widetilde{ZB})$ induced by the inclusion $B \hookrightarrow X$,

\[
  \begin{tikzcd}
 H^0(X_i)      \arrow[hook]{d}{\i_{B_i}^*} & \arrow{l}{\phi} H^0(X_i^{i+1}) \arrow[hook]{d}{\i_{B_i^{i+1}}^*}  \arrow{r}  &  H^0(X_{i+1}) \arrow[hook]{d}{\i_{B_{i+1}}^*}  \\
 H^0(B_i)    &  \arrow{l}{\varphi}  H^0(B_i^{i+1})  \arrow{r}  &  H^0(B_{i+1}).
 \end{tikzcd}
\]

All vertical maps are injective by Proposition~\ref{prp:computationintermsofcoveredandboundary}.

There is a single critical point $p \in B_i^{i+1}$ of the projection $B \rightarrow I$, and by assumption $(iii)$ we know if $p$ is type-$N$ or type-$D$ with respect to $C \rightarrow I$. Assume now that it is type-$N$; the argument in the type-$D$ case is symmetric. Since $p$ is type-$N$ with respect to the projection $C \rightarrow I$, it is type-$D$ with respect to the projection $X \rightarrow I$. Hence $X_i^{i+1}$ deformation retracts onto $X_i$ by Lemma~\ref{lem:defretracttoregvalues}, which implies that $\phi$ is an isomorphism.

We claim that $\varphi$ is injective. Indeed, any nontrivial kernel of $\varphi$ implies the existence of a component $A$ of the cobordism $B_i^{i+1}$ whose boundary $\partial A$ is disjoint from $B_i$ and hence lies entirely in  $B_{i+1}$. Hence, since $p$ is the only critical point in $B_i^{i+1}$, it must lie on $A$, since any component of $B_i^{i+1}$ without a critical point is a trivial cobordism. It follows that $p$ is a local minimum of $B \rightarrow I$. Since $p$ is type-$N$ with respect to $C \rightarrow I$, the normal vector to $B$ at $p$ pointing into the interior of $C$ is pointing positively along $I$ and hence $p$ is also a local minimum of $C \rightarrow I$. This contradicts the assumption that there are no local minima of $C \rightarrow I$ away from $0$ and $1$, proving the claim that $\varphi$ is injective.

To complete the proof, recall that in the commutative diagram above all vertical maps are injective, $\phi$ is an isomorphism, and $\varphi$ is injective. It follows that $im(\i^*_{B_i^{i+1}}) = \varphi^{-1}(im(\i^*_{B_i}))$. Since we computed $im(\i^*_{B_i})$ above and since the map $\varphi$ is part of the given data in assumption $(i)$, this completes the computation of $im(\i^*_{B_i^{i+1}})$ and the proof of Theorem~\ref{thm:main}.
\end{proof}

\begin{ex} \label{ex:homologicaltheorem}
We use the method in Theorem~\ref{thm:main} to compute $H^0(\widetilde{ZX};k)$ in example (a) from Figure~\ref{fig:examples}, providing the lower bound $|\pi_0(\cE)| \geq |\varprojlim Hom_{k-algebra}( H^0(\widetilde{ZX};k), k)| =  2$.

The zigzag diagram of spaces $\widetilde{ZB} = (B_0 \hookrightarrow B \hookleftarrow B_1)$ is a circle $B_0 = S^1$ and two circles $B_1 = S^1 \sqcup S^1$ included as the boundary of the pair of pants $B$. Hence we have the zigzag diagram of $k$-algebras
\begin{align*}
H^0(\widetilde{ZB}; k) = (k \hookleftarrow k \hookrightarrow k \oplus k).
\end{align*}
Since the time-$t$ covered region is connected for all $t \in I$, the map $\i_B^* : H^0(\widetilde{ZX};k) \rightarrow H^0(\widetilde{ZB};k)$ is an isomorphism by Proposition~\ref{prp:computationintermsofcoveredandboundary}. Hence $\varprojlim Hom_{k-algebra}( H^0(\widetilde{ZX};k), k)$ has cardinality $2$.
\end{ex}

\begin{rmk} \label{rmk:mainlowdims}
Theorem~\ref{thm:main} is stated and proved in dimensions $d \geq 2$. Here we explain the $d = 0,1$ cases. For $d = 0$, the only possibility is $C = \mathbb{R}^0 \times I$ and the uncovered region is empty $X = \emptyset$ so there are no evasion paths $\cE = \emptyset.$ In the $d = 1$ case, the assumption that the projection $C \rightarrow I$ does not have any local minima or maxima away from $0,1 \in I$ implies that the cardinality of $\pi_0(\cE)$ is equal to the number of connected components of the time-$0$ uncovered region $\pi_0(X_0)$ minus the number of type D boundary critical points of $X \rightarrow I$. This is equal to $|\pi_0(\cE)| = |\pi_0(C_0)| - 1 - \#(\text{type N critical points of } C \rightarrow I).$
\end{rmk}

\bibliographystyle{amsplain}
\bibliography{../../references}
\end{document}